\newtheorem{theorem}{Theorem}[section]
\newtheorem{lemma}[theorem]{Lemma}
\newtheorem{conjecture}[theorem]{Conjecture}
\newtheorem{proposition}[theorem]{Proposition}
\theoremstyle{definition}
\theoremstyle{remark}
\newtheorem{remark}[theorem]{Remark}
\newtheorem{remarks}[theorem]{Remarks}
\numberwithin{equation}{section}
\def\ctl#1{

\smallskip\smallskip
\centerline{#1}

\smallskip\smallskip\noindent\!\!}
\def\ctlm#1{

\smallskip
\centerline{#1}

\smallskip\noindent\!\!}
\let\ds=\displaystyle
\let\wt=\widetilde
\let\ov=\overline
\def\N{\mathbb{N}}
\def\M{\mathbb{M}}
\def\R{\mathbb{R}}
\def\C{\mathbb{C}}
\def\Q{\mathbb{Q}}
\def\Z{\mathbb{Z}}
\def\Cl{{\mathcal C}\hskip-2pt{\ell}}
\def\Frac#1#2{\hbox{\footnotesize $\displaystyle \frac{#1}{#2}$}}
\def\plus{\displaystyle\mathop{\raise 2.0pt \hbox{$\bigoplus $}}\limits}
\def\prd{ \displaystyle\mathop{\raise 2.0pt \hbox{$\prod$}}\limits}
\def\sm{  \displaystyle\mathop{\raise 2.0pt \hbox{$\sum$}}\limits}
\def\lien{\mathrel{\mkern-4mu}}
\def\too{\relbar\lien\rightarrow}
\def\tooo{\relbar\lien\relbar\lien\too}
\def\order{\raise1.5pt \hbox{${\scriptscriptstyle \#}$}}
\begin{document}

\title[Heuristics in direction of a $p$-adic Brauer--Siegel theorem]
{Heuristics and conjectures in direction of  \\ a $p$-adic Brauer--Siegel theorem}

\author{Georges Gras}
\address{Villa la Gardette, Chemin Ch\^ateau Gagni\`ere 
 F--38520 Le Bourg d'Oisans, France,
{\rm \url{https://www.researchgate.net/profile/Georges_Gras}}}
\email{g.mn.gras@wanadoo.fr}

\keywords{$p$-adic $\zeta$-functions, class field theory, $p$-ramification, Brauer--Siegel theorem}
\subjclass{11S40, 11R37, 11R29, 11R42}

\date{February 4, 2018 -- Final version after corrections: March 1, 2018}

\begin{abstract}
 Let $p$ be a fixed prime number. 
Let $K$ be a totally real number field of discriminant $D_K$ and
let ${\mathcal T}_K$ be the torsion group of the Galois group of  the maximal 
abelian $p$-ramified pro-$p$-extension of $K$ (under Leopoldt's conjecture). 
We conjecture the existence of a constant ${\mathcal C}_p>0$ such that 
${\rm log}(\order {\mathcal T}_K) \leq {\mathcal C}_p \cdot {\rm log}(\sqrt{D_K})$ 
when $K$ varies in some specified families (e.g., fields of fixed degree). 
In some sense, we suggest the existence of a $p$-adic analogue, 
of the classical Brauer--Siegel Theorem, wearing here on the valuation of 
the residue at $s=1$ (essentially equal to $\order {\mathcal T}_K$)
of the $p$-adic $\zeta$-function $\zeta_p(s)$ of $K$.
We shall use a different definition that of Washington, given in the 1980's,
and approach this question via the arithmetical study of ${\mathcal T}_K$
since $p$-adic analysis seems to fail because of possible abundant ``Siegel zeros''
of $\zeta_p(s)$, contrary to the classical framework.
We give extensive numerical verifications for quadratic and cubic fields 
(cyclic or not) and publish the PARI/GP programs directly usable by the reader
for numerical improvements. 
Such a conjecture (if exact) reinforces our conjecture that any fixed number 
field $K$ is $p$-rational (i.e., ${\mathcal T}_K=1$) for all $p \gg 0$.
\end{abstract}

\maketitle

\section{Abelian $p$-ramification -- Main definitions and notations}\label{def}

Let $K$ be a totally real number field of degree $d$, and let $p\geq 2$ be 
a prime number fulfilling the Leopoldt conjecture in $K$.
We denote by $\Cl_K$ the $p$-class group of $K$ (ordinary sense)
and by $E_K$ the group of $p$-principal global units $\varepsilon \equiv 1 
\pmod {\prod_{{\mathfrak p} \mid p} {\mathfrak p}}$ of $K$.

\smallskip
Let's recall from \cite{Gr1,Gr4} the diagram of the so called 
{\it abelian $p$-ramification theory}, in which 
$K^{\rm c} = K \Q^{\rm c}$ is the cyclotomic $\Z_p$-extension of $K$
(as compositum with that of~$\Q$), 
$H_K$ the $p$-Hilbert class field and $H_K^{\rm pr}$ the 
maximal abelian $p$-ramified (i.e., unramified outside $p$)
pro-$p$-extension of~$K$.

\smallskip
Let $U_K:=\plus_{{\mathfrak p} \mid p} U_{\mathfrak p}^1$ 
be the $\Z_p$-module (of $\Z_p$-rank $d$) of $p$-principal 
local units of $K$, where each $U_{\mathfrak p}^1
:=\{u \in K_{\mathfrak p}^\times, \ u \equiv 1 \pmod {\ov {\mathfrak p}} \}$ 
is the group of $\ov{\mathfrak p}$-principal units of the 
completion $K_{\mathfrak p}$ of $K$ at ${\mathfrak p} \mid p$,
where $\ov {\mathfrak p}$ is the maximal ideal of the ring 
of integers of $K_{\mathfrak p}$.

\smallskip
For any field $k$, let $\mu^{}_k$ be the group of roots of unity of $k$
of $p$-power order.
Then put $W_K := {\rm tor}^{}_{\Z_p} \big (U_K  \big )  = 
\plus_{{\mathfrak p} \mid p} \mu^{}_{K_{\mathfrak p}}$ and 
${\mathcal W}_K := W_K /\mu^{}_K$, where $\mu^{}_K=\{1\}$ or $\{\pm 1\}$. 

Let $\overline E_K$ be the closure in $U_K$ of the diagonal image 
of $E_K$; by class field theory this gives in the diagram 
${\rm Gal}(H_K^{\rm pr}/H_K) \simeq U_K/\overline E_K$; then
let $\Cl_K^{\,\rm c}$ be the subgroup of $\Cl_K$ corresponding 
to the subgroup ${\rm Gal}(H_K/ K^{\rm c} \cap H_K)$. 

\smallskip
Put (see \cite[Chapter III, \S\,2\,(a) \& Theorem 2.5]{Gr1} with the set $S$ of infinite
places, to get the ordinary sense, and with the set $T$ of $p$-places):
\ctl{${\mathcal T}_K :=  {\rm  tor}^{}_{\Z_p}({\rm Gal}(H_K^{\rm pr}/K))
= {\rm Gal}(H_K^{\rm pr}/K^{\rm c})$.}
As we know, $\order {\mathcal T}_K$ is essentially the residue of the 
$p$-adic $\zeta$-function of $K$ at $s=1$ \cite{Col, Se}; we will detail 
this in Subsection \ref{places}.

\smallskip
We have (because of Leopoldt's conjeture) the following exact sequence 
defining ${\mathcal R}_K$, where ${\rm log}_p$ is the $p$-adic logarithm
(\cite[Lemma III.4.2.4 \& Corollary III.3.6.3]{Gr1},
\cite[Lemma 3.1 \& \S\,5]{Gr4}):
\begin{equation*}
1\to  {\mathcal W}_K  \too 
 {\rm tor}_{\Z_p}^{} \big(U_K \big / \ov E_K \big) \\
 \mathop {\tooo}^{\!\!{\rm log}_p}  {\rm tor}_{\Z_p}^{}\big({\rm log}_p\big 
(U_K \big) \big / {\rm log}_p (\ov E_K) \big)=: {\mathcal R}_K \to 0. 
\end{equation*}

\noindent
The group ${\mathcal R}_K$ (or its order) is called the {\it normalized $p$-adic 
regulator of $K$} and makes sense for any number field (provided one replaces 
$K^{\,\rm c}$ by the compositum $\wt K$ of the $\Z_p$-extensions):
\unitlength=0.86cm 
$$\vbox{\hbox{\hspace{-2.8cm} 
 \begin{picture}(11.5,5.6)
\put(6.7,4.50){\line(1,0){1.3}}
\put(8.75,4.50){\line(1,0){2.0}}
\put(3.85,4.50){\line(1,0){1.4}}
\put(9.1,4.15){\footnotesize$\simeq\! {\mathcal W}_K$}
\put(4.3,2.50){\line(1,0){1.25}}

\bezier{350}(3.8,4.8)(7.6,5.8)(11.0,4.8)
\put(7.2,5.45){\footnotesize${\mathcal T}_K$}

\put(3.50,2.9){\line(0,1){1.25}}
\put(3.50,0.8){\line(0,1){1.4}}
\put(5.7,2.9){\line(0,1){1.25}}

\bezier{300}(3.9,0.55)(4.9,0.8)(5.6,2.3)
\put(5.2,1.3){\footnotesize$\simeq \! \Cl_K$}
\put(4.1,4.15){\footnotesize$\simeq\! \Cl_K^{\,\rm c}$}

\bezier{300}(6.3,2.5)(8.5,2.6)(10.8,4.3)
\put(8.0,2.6){\footnotesize$\simeq \! U_K/\ov E_K$}

\bezier{500}(3.9,0.48)(9,0.6)(10.95,4.3)
\put(8.5,1.4){\footnotesize${\mathcal A}_K$}

\put(10.85,4.4){$H_K^{\rm pr}$}
\put(5.4,4.4){$K^{\rm c} H_K$}
\put(8.0,4.4){$H_K^{\rm bp}$}
\put(6.8,4.14){\footnotesize$\simeq\! {\mathcal R}_K$}
\put(3.3,4.4){$K^{\rm c}$}
\put(5.55,2.4){$H_K$}
\put(2.7,2.4){$K^{\rm c} \!\cap \! H_K$}
\put(3.4,0.38){$K$}
\end{picture}   }} $$
\unitlength=1.0cm

\noindent
The field $H_K^{\rm bp}$, fixed by ${\mathcal W}_K$, is the 
Bertrandias--Payan field, i.e., the compositum of the $p$-cyclic extensions 
of $K$ embeddable in $p$-cyclic extensions of arbitrary large degree.

\section{$v$-adic analytic prospects}\label{sec4}

Let ${\mathcal K}_{\rm real}$ (resp. ${\mathcal K}_{\rm real}^{(d)}$) be the set 
of totally real number fields $K$ of any degree (resp. of fixed degree $d$).
For a fixed prime $p$ and a random $K \in {\mathcal K}_{\rm real}$, we have:
\ctl{$\order {\mathcal T}_K = \order \Cl_K^{\,\rm c} \cdot
\order {\mathcal R}_K \cdot \order {\mathcal W}_K$,} 
which may be equal to $1$ (defining ``$p$-rational fields'') or not, and it 
will be interesting to know if the $p$-adic valuation of $\order {\mathcal T}_K$ 
can be bounded according, for instance, to the discriminant $D_K$ of $K$. 
If so, this would be interpreted as a $p$-adic version of the 
archimedean Brauer--Siegel theorem, which is currently 
pure speculation, but we intend to experiment, algebraically, 
this context since $p$-adic analysis does not seem to succeed 
as explain by Washington in \cite{Wa2}: 

\medskip
\centerline{\it A Brauer--Siegel theorem using $p$-adic $L$-functions fails;} 

\medskip\noindent
in the same way, we have similar comments by Ivanov in \cite[Section 1]{Iv}:

\medskip
\centerline{\it The $p$-adic analogue of Brauer--Siegel and 
hence also of Tsfasman--Vlad\u{u}\c{t} fails.}

\medskip
But this requires some explanation:

\subsection{The Siegel zeros} 
In fact, there is a possible ambiguity about the definitions and the role of the 
discriminant in a $p$-adic Brauer--Siegel frame.

\smallskip
Let $K \in {\mathcal K}_{\rm real}$, let $h_K$ be its class number, 
$R_{K,p}$ its classical $p$-adic regulator, $D_K$ its discriminant; in \cite[\S\,3]{Wa2}, 
Washington considers a sequence of such number fields $K$, fulfilling the condition
 $\Frac{[K : \Q]}{v_p(\sqrt{D_K})} \to 0$, and study the limit:
\ctl{$\ds \lim_{K} \Big( \Frac{v_p(h_K \cdot R_{K,p})}{v_p(\sqrt{D_K})} \Big)$,} 
where $v_p$ denotes the $p$-adic valuation;
thus the above condition implies that $p$ must be ``highly ramified'' in the fields
of the sequence, which eliminates for instance families of fields of constant degree $d$. 
So, with Washington's definition, $K$ belongs in general to some towers of number 
fields (e.g., the cyclotomic one).

\smallskip
Washington shows examples and counterexamples of the $p$-adic 
Brauer--Siegel property $\Frac{v_p(h_K \cdot R_{K,p})}{v_p(\sqrt{D_K})} \to 1$
(\cite[Proposition 2 \& Theorem 2]{Wa2}). In his Theorem 3, he
uses the formula of Coates \cite[p. 364]{Coa}, which implies 
$\ds\liminf_K \Big(\Frac{v_p(h_K \cdot R_{K,p})}{v_p(\sqrt{D_K})} \Big) \geq 1$
as $\Frac{[K : \Q]}{v_p(\sqrt{D_K})} \to 0$.
We shall consider instead 
$\Frac{v_p \Big(h_K \cdot \frac{R_{K,p}}{\sqrt{D_K}}\Big) \cdot {\rm log}_\infty(p)}
{{\rm log}_\infty(\sqrt{D_K})}$, where ${\rm log}_\infty$ is the usual complex 
logarithm, or more precisely we shall study:
\ctl{$C_p(K):= \Frac{v_p(\order {\mathcal T}_K) \cdot 
{\rm log}_\infty(p)}{{\rm log}_\infty(\sqrt{D_K})} =
\Frac{{\rm log}_\infty(\order {\mathcal T}_K)}
{{\rm log}_\infty(\sqrt{D_K})}$, \ for any $K \in {\mathcal K}_{\rm real}$,}
then the existence of $\ds\sup_{K \in {\mathcal K}}(C_p(K))$, and of
$\ds\limsup_{K \in {\mathcal K}}(C_p(K))$, for any given infinite 
set ${\mathcal K} \subseteq {\mathcal K}_{\rm real}$, and $\ds\sup_p(C_p(K))$, 
$\ds\limsup_p(C_p(K)) \in \{0, \infty\}$ for $K$ fixed (see Conjectures 
\ref{conjprinc}, \ref{conjprinc2}).
However, there are some connections between the two definitions since
the quantity $v_p(h_K \cdot R_{K,p})$ appears in each of them; only the 
measure of the order of magnitude differs for the analysis of sequences of fields.
It is therefore not surprising to find, for instance in \cite{SSW, Wa2, Wa3},
some allusions to the group ${\mathcal T}_K$.

\smallskip
Let's finish these comments with a quote from Washington's paper illustrating the 
crucial fact that a great $v_p(\order {\mathcal T}_K)$ is related to the existence
of zeros, of the $p$-adic $\zeta$-function, 
or of the $L_p$-functions (see \cite{SSW, Wa3, Wa4, Wa5} for complements 
about these zeros and for some numerical data):

\medskip\noindent
{\it In the proof of the classical Brauer--Siegel theorem, one needs the
fact that there is at most one Siegel zero, that is, a zero close to $1$. 
The fact that the Brauer--Siegel theorem fails $p$-adically could be
taken as further evidence for the abundance of $p$-adic zeroes near~$1$.

($\cdots$)

\noindent
Finally, we remark that the possible existence of $p$-adic Siegel
zeroes and the failure of results such as the $p$-adic Brauer--Siegel
theorem indicate that it could be difficult, if not impossible, to do
analytic number theory with $p$-adic $L$-functions. For example, I do
not know how to obtain estimates on $\pi(x)$, the number of primes less
that or equal to $x$, using the fact that the $p$-adic zeta function has
a pole at $1$.}

\begin{remark}
One may explain what appens as follows, for simplicity in the case 
of a real quadratic field $K$ of character $\chi_K^{}$:

\smallskip
Roughly speaking, $v_p(L_p(1,\chi_K^{}))$ is closely related to 
$v_p(\order {\mathcal T}_K)$ and $v_p(L_p(0,\chi_K^{}))$ is closely related to 
$v_p(B_1(\omega^{-1}\,\chi_K^{}))$ ($\omega$ is the Teichm\"uller
character and $B_1(\omega^{-1}\,\chi_K^{})$ the generalized Bernoulli 
number of character $\omega^{-1}\,\chi_K^{}$), which is closely related to
the order of a suitable component of the $p$-class group
of the ``mirror field $K^*$'' (e.g., for $p=3$ and $K= \Q(\sqrt m)$, 
$K^*=  \Q(\sqrt {-3\,m})$); but since $\omega^{-1}\,\chi_K^{}$ is odd,
no unit intervenes and $v_p(L_p(0,\chi_K^{}))$ is usually ``small'' compared to
$v_p(\order {\mathcal T}_K)$ assumed to be ``very large'' (e.g.,
$m= 150094635296999122$ giving $v_3(\order {\mathcal T}_K)=19$
but $v_3(\order \Cl_{K^*})=1$). 
Thus, there exist in general ``Siegel zeros'' of $L_p(s,\chi_K^{})$, i.e., 
very close to $1$, which is an obstruction to a Brauer--Siegel strategy 
(see numerical illustrations for $p=2, 3$ in \cite{Wa3, Wa4, Wa5}).
\end{remark}

Consequently we will adopt another point of view.
Let $K \in {\mathcal K}_{\rm real}$ and let $p\geq 2$ be any fixed prime number.
As we have recalled it, $\order {\mathcal T}_K$ is in close 
relationship with $p$-adic $L$-functions (at $s=1$) of even 
Dirichlet characters in the abelian case (Kubota--Leopoldt, Barsky, Amice--Fresnel,...), 
or more generally with the residue at $s=1$ of the $p$-adic $\zeta$-function of 
$K$, built or study by many authors (Coates, Shintani, Barsky, 
Serre, Cassou-Nogu\`es, Deligne--Ribet, Katz, Colmez,...).
Conversely, there is no {\it algebraic} invariant (like a Galois group) interpreting the 
residue of the complex $\zeta$-function, but we have in this (archimedean) case numerous
inequalities.
So, we shall compare the complex and $p$-adic cases
to try to unify the set of all the points of view.
For this, we define normalizations of the $\zeta$-functions of a totally 
real number field (from \cite{Coa, Col}, then \cite{Gr4} for the regulators).

\subsection{Definitions and normalizations} \label{places}
Let $K \in {\mathcal K}_{\rm real}$ be of degree $d$ and let:
\ctl{${\mathcal P} := \{p_\infty, 2, 3, \ldots , p, \ldots\}$}
be the set of places of $\Q$, including the infinite place $p_\infty$ 
(we also use the symbol $\infty$ for real or complex 
functions, like ${\rm log}$-function, in the same logic 
as for $p$-adic ones; for instance, $R_{K,\infty}$
and $R_{K,p}$ shall be the usual regulators built with ${\rm log}_\infty$ and
${\rm log}_p$, respectively). We shall use, for any place $v \in {\mathcal P}$, 
subscripts ${(\bullet)}_{K,v}^{}$ for all invariants considered; when the context is clear, 
we omit $v$ ($p$-adic in most cases).

\subsubsection{\sc $v$-Cyclotomic extensions and $v$-conductors}
The $p$-cyclotomic $\Z_p$-exten\-sion is denoted $\Q^{{\rm c},p}$ and
we introduce $\Q^{{\rm c},p_\infty} := \Q$ as the ``${p_\infty}$-cyclotomic extension''.
We put $\Q^{{\rm c},v}=:\Q^{\rm c}$ for any $v \in {\mathcal P}$ 
if there is no ambiguity.
We attribute to the field $\Q$ the ``$v$-conductor'' ${\mathfrak f}^{}_{\Q, v} := p$ 
(resp. $4, 2$) if $v=p \ne 2$ (resp. $2, {p_\infty}$). We shall put $\sim$ for equalities 
up to a $p$-adic unit.

\subsubsection{\sc Normalized $\zeta$-functions at ${p_\infty}$}
We define at the infinite place ${p_\infty}$:
\begin{equation}\label{zeta}
\wt \zeta_{K,{p_\infty}}(s) := \Frac {{\mathfrak f}^{}_{K\cap \, \Q^{\rm c}}} 
{2^{d}} \cdot \zeta_{K, {p_\infty}}(s) = \Frac {1}{2^{d-1}} 
\cdot \zeta_{K, {p_\infty}}(s),\  s \in \C
\end{equation}
(see \cite[Remark III.2.6.5\,(ii)]{Gr1} for justifications about the factor 
$\frac {1}{2^d}$); then, let $h_K$ be the class number (ordinary sense), $R_{K,\infty}$ the 
classical regulator, $D_K$ the discriminant of $K$, and
${\mathcal W}_{K,{p_\infty}} := \plus_{w \mid p_\infty} \mu_{K_w}^{}\big / \mu_K^{}$,
of order $2^{d-1}$ since $K$ is totally real.
Then consider, with a perfect analogy with the $p$-adic case:
\begin{equation}\label{infini}
\order {\mathcal T}_{K,{p_\infty}} := 
h_K \cdot \Frac{R_{K,\infty}}{2^{d-1} \cdot \sqrt {D_K}} \cdot 
\order {\mathcal W}_{K,{p_\infty}} =
 h_K \cdot \Frac{R_{K,\infty}}{\sqrt {D_K}}.\ \footnote{\,The factor 
$\frac{R_{K,\infty}}{2^{d-1}\cdot \sqrt {D_K}}$ is by definition the normalized 
regulator ${\mathcal R}_{K,{p_\infty}}$ for $v={p_\infty}$, using the normalized 
log-function $\frac{1}{2} \cdot {\rm log}_\infty$ instead of ${\rm log}_\infty$; 
from \cite{AF}, it is defined without ambiguity. Thus, the factor $ {\mathcal W}_{K,{p_\infty}}$ 
does exist as in the $p$-adic case. The invariant ${\mathcal T}_{K,{p_\infty}}$
is related to the Arakelov class group of $K$ (see \cite{Sch} and its bibliography),
which gives the best interpretation.}
\end{equation}
Let $\wt\kappa_{K,{p_\infty}}^{}$ be the residue at $s=1$ of $\wt \zeta_{K,{p_\infty}}(s)$.
From the so-called complex ``analytic formula of the class number''  of $K$ 
(see, e.g., \cite[Chap. 4]{Wa1}), we get:
\begin{equation}\label{residu}
\wt \kappa_{K,{p_\infty}}^{} = h_K \cdot \Frac{R_{K,\infty}}{\sqrt {D_K}} 
= \order {\mathcal T}_{K,{p_\infty}}.
\end{equation}

\subsubsection{\sc Normalized $\zeta_p$-functions at $v=p$}
We define at a finite place $p$:
\begin{equation}\label{p}
\wt \zeta_{K,p}(s) := \Frac {{\mathfrak f}^{}_{K\cap \, \Q^{\rm c}}}
 {2^{d}} \cdot \zeta_{K,p}(s), \ s\in \Z_p,
\end{equation}
where ${\mathfrak f}^{}_{K\cap \, \Q^{\rm c}}$ is the conductor of 
$K\cap \, \Q^{\rm c}$ (if $K\cap \, \Q^{\rm c}$ is the $n$th stage in
$\Q^{\rm c}$, then ${\mathfrak f}^{}_{K\cap \, \Q^{\rm c}} \sim 
2\,p\cdot [K\cap \, \Q^{\rm c} : \Q] \sim 2\,p^{n+1}$); since from  \cite{Coa, Col, Se},
the residue of $ \zeta_{K,p}(s)$ at $s=1$ is
$\kappa_{K,p}^{} \sim \Frac{2^{d-1}\cdot h_K\cdot R_{K,p}}{\sqrt {D_K}}$,
we get the normalized $p$-adic residue:
\begin{equation}\label{fini}\hspace{0.6cm}
\wt\kappa_{K,p}^{} =  \Frac {{\mathfrak f}^{}_{K\cap \, \Q^{\rm c}}} 
{2^{d}} \cdot \kappa_{K,p}^{} \sim \order {\mathcal T}_{K,p} \ 
\hbox{(see Subsection \ref{abelfunct} for the abelian case)}.
\end{equation}

\noindent
So, the residues of the normalized $\zeta_v$-functions of $K$ 
are, for all $v \in {\mathcal P}$, such that:
\ctl{$\wt\kappa_{K,v}^{} := \ds \lim_{s \to 1} (s-1)\cdot
\Frac {{\mathfrak f}^{}_{K\cap \, \Q^{\rm c}}} {2^{d}} \cdot \zeta_{K,v}(s)
\sim \order {\mathcal T}_{K,v}$,}
which is the order of an arithmetical invariant for finite places $v=p$ and 
the measure of a real volume for $v={p_\infty}$ (see the last footnote).

\subsection{Abelian complex $L$-functions -- Upper bounds}
In the abelian case:
\begin{equation}\label{analytic}
\order {\mathcal T}_{K,{p_\infty}} = h_K \cdot \Frac{R_{K,\infty}}{\sqrt {D_K}} = 
\prd_{\chi \ne 1} \, \Frac{1}{2}\,L_{p_\infty} (1,\chi),
\end{equation}
where $\chi$ goes through all the corresponding Dirichlet 
characters of $K$ with conductor $f_\chi$, and where
$L_{p_\infty}$ denotes the complex $L$-function. 
If $K=\Q(\sqrt m)$, of fundamental unit $\varepsilon_K^{}$
and quadratic character $\chi_K^{}$, one gets:
\begin{equation*}
\order {\mathcal T}_{K,{p_\infty}} = 
h_K \cdot \Frac{{\rm log}_\infty(\varepsilon_K^{}) }{\sqrt {D_K}}
= \Frac{1}{2} \cdot L_{p_\infty}(1, \chi_K^{}).
\end{equation*} 

For each $L_{p_\infty}(1, \chi)$ one has many upper bounds which 
are improvements of the classical inequality
$\Frac{1}{2} \cdot L_{p_\infty}(1, \chi) \leq 
\big(1 +o(1)\big) \cdot {\rm log}_\infty(\sqrt{f_\chi})$.
In \cite[Corollaire 1]{Ra} one has, for even primitive characters:
\begin{equation*}
\hbox{$\Frac{1}{2} \cdot L_{p_\infty}(1, \chi) \leq 
\Frac{1}{2} \cdot {\rm log}_\infty(\sqrt {f_\chi})$,}
\end{equation*}
giving from the previous definition \eqref{infini} and formula \eqref{analytic}:
\begin{equation}\label{major}
{\rm log}_\infty(\order {\mathcal T}_{K,{p_\infty}}) 
\leq {\mathcal C}_{p_\infty} \cdot {\rm log}_\infty(\sqrt {D_K}),
\end{equation}
with an explicit constant ${\mathcal C}_{p_\infty}$ if $K$ runs trough the set
of real abelian fields such that $\Frac{d}{{\rm log}_\infty(\sqrt {D_K})} \to 0$,
for instance in the simplest form of Brauer--Siegel theorem.

\medskip
We shall give numerical complements in Subsection \ref{avsp} by means of
computations of lower and upper bounds of:
\begin{equation*}\label{majorinfini}
C_{p_\infty}(K)=\wt {B\!S}_K :=
\Frac{{\rm log}_\infty(\order {\mathcal T}_{K,{p_\infty}})}{{\rm log}_\infty(\sqrt{D_K})}
\end{equation*}
(see Definition \ref{BSTV}).

\begin{remark}\label{note1}
For the sequel, we do not need any sophisticated upper bound
(only the existence of ${\mathcal C}_{p_\infty}$), but one may refer to
\cite{GS,L2,L3,Pin,Ra} for other inequalities; for instance,
one gets, for real abelian fields $K$ of degree $d$, with our notations:
\ctlm{$\order {\mathcal T}_{K,{p_\infty}} := h_K \cdot \Frac{R_{K,\infty}}{\sqrt {D_K}} \leq 
\Big( \Frac{1}{2}\,\Frac{{\rm log}_\infty(\sqrt{D_K})}{d-1} \Big)^{d-1}$,}
thus in the cases $d=2$ and $d=3$:
$$\order {\mathcal T}_{K,{p_\infty}} \!\! = h_K \cdot 
\Frac{{\rm log}_\infty(\varepsilon_K^{}) }{\sqrt {D_K}} \leq 
\Frac{1}{2}\,{\rm log}_\infty(\sqrt{D_K}),\ 
\order {\mathcal T}_{K,{p_\infty}} \!\! = h_K \cdot 
\Frac{R_{K,\infty}}{\sqrt {D_K}} \leq 
\Frac{1}{16} \Big({\rm log}_\infty(\sqrt{D_K}) \Big)^{\!2} \!,$$
respectively. In the quadratic and cubic cases one shows that:
\begin{equation}\label{lou}
h_K \leq \Frac{1}{2} \cdot \sqrt{D_K}, \hspace{0.5cm}
h_K \leq \Frac{2}{3} \cdot \sqrt{D_K},\ \hbox{ respectively}.
\end{equation}
\end{remark}

\subsection{Abelian $L_p$-functions}\label{abelfunct}
The Kubota--Leopoldt $p$-adic $L$-functions give rise to the
analytic formula \cite[\S\,2.1 \& Th\'eor\`eme 6, \S\,2.3]{AF}:
\ctl{$h_K \cdot \Frac{R_{K,p}}{\sqrt {D_K}}
\sim \prd_{\chi \ne 1} \,  \Frac{1}{2}\,L_p (1,\chi)
\cdot \prd_{\chi \ne 1} \,\Big(1-\Frac{\chi (p)}{p} \Big)^{-1}$.}
The ``$p$-adic class number formula'' 
for real abelian fields uses the formula of \cite{Coa}:
\ctl{$\order {\mathcal T}_{K,p} \sim [K \cap \, \Q^{\rm c} : \Q] \cdot
 \Frac{p}{\prod_{{\mathfrak p}\mid p} {\rm N}{\mathfrak p}}
\cdot h_K \cdot \Frac{R_{K,p}} {\sqrt {D_K}}$.}
Thus, since $\prod_{\chi} \,\Big(1-\Frac{\chi (p)}{p} \Big)^{-1} 
=\prod_{{\mathfrak p}\mid p} (1- {\rm N}{\mathfrak p}^{-1})^{-1} \sim 
\prod_{{\mathfrak p}\mid p} {\rm N}{\mathfrak p}$, this yields:
\begin{equation}\label{59}
\order {\mathcal T}_{K,p} \sim
 \Frac{p}{\prod_{{\mathfrak p}\mid p} {\rm N}{\mathfrak p}}
\!\cdot\! \prd_{\chi \ne 1} \Frac{1}{2}\,L_p (1,\chi)
\!\cdot\!\! \prd_{\chi \ne 1} \Big(1-\Frac{\chi (p)}{p} \Big)^{-1} \!\!\!
\sim \prd_{\chi \ne 1} \Frac{1}{2}\,L_p (1,\chi) = \wt\kappa_{K,p}^{}.
\end{equation}

But no upper bound of the $p$-adic valuation of this residue is known. 
So we must, on the contrary, try to study directely $\order {\mathcal T}_{K,p}$ 
with arithmetic tools.

\subsection{Arithmetical study of $\wt \kappa_{K,p}^{}$} To study this residue, 
consider \eqref{59} giving $\wt\kappa_{K,p}^{} \sim \order {\mathcal T}_{K,p}$.
In $\order {\mathcal T}_K = 
\order \Cl_K^{\,\rm c} \cdot \order {\mathcal R}_K \cdot \order{\mathcal W}_K$,
the computation of $\order {\mathcal W}_K$ is obvious.
Then $\order \Cl_K^{\,\rm c} = \Frac{\order \Cl_K}{[H_K \cap K^{\rm c} : K]}
= \order \Cl_K \cdot \frac{1}{e_p}\cdot 
(\langle -1 \rangle : \langle -1 \rangle \cap  {\rm N}_{K/\Q}(U_K)) \cdot [K \cap \Q^{\rm c} : \Q]$,
where $e_p$ is the ramification index of $p$ in $K/\Q$ \cite[Theorem III.2.6.4]{Gr1}. 
So, for $p \gg 0$ we get $\order \Cl_K^{\,\rm c} \cdot \order {\mathcal W}_K = 1$.
Then the main factor is (whatever the field $K$ and the prime $p$
\cite[Proposition 5.2]{Gr4}):
\begin{equation}\label{regulator}
\order {\mathcal R}_K = \order {\rm tor}_{\Z_p}^{}\big({\rm log}_p\big (U_K \big)  \big / 
{\rm log}_p (\ov E_K) \big) \sim \Frac{1}{2} \cdot
\Frac{\big(\Z_p : {\rm log}_p({\rm N}_{K/\Q}(U_K)) \big)}
{ \order {\mathcal W}_K \cdot \prod_{{\mathfrak p} \mid p}{\rm N} {\mathfrak p}}
\cdot \Frac {R_{K,p}}{\sqrt {D_K}},
\end{equation}
which is unpredictible and more complicate 
if $p$ ramifies in $K$ or if $p=2$.

\smallskip\noindent
 In the non-ramified case for $p\ne 2$, 
it is given by the classical detrminant provided that
one replaces ${\rm log}_p$ by the ``normalized logarithm'' $\frac{1}{p}\,{\rm log}_p$.

\begin{remarks} \label{defdelta} Let $K=\Q(\sqrt m)$ and let $p \nmid D_K$
with residue degree $f \in \{1,2\}$. 

(i) For $p\ne 2$, 
$\order {\mathcal R}_K \sim \frac{1}{p}\,{\rm log}_p(\varepsilon_K^{}) \sim 
p^{\delta_p(\varepsilon_K^{})}$, where $\delta_p(\varepsilon_K^{}) = 
v_p \Big (\Frac{\varepsilon_K^{p^f-1} - 1}{p} \Big)$. 

(ii) For $p=2$, the good definition of the $\delta_2$-function is
$\delta_2(\varepsilon_K^{}):=
v_2 \Big (\Frac{\varepsilon_K^{2} - 1}{8} \Big)$ if $f=1$ and
$v_2 \Big (\Frac{\varepsilon_K^{6} - 1}{4} \Big)$
if $f=2$, in which cases $\order {\mathcal R}_K \sim 2^{\delta_2(\varepsilon_K)}$.

\smallskip
(iii) The existence of an upper bound for $v_p(\frac{1}{2}L_p(1, \chi_K^{}))$ would 
be equivalent to an estimation of the order of magnitude of ${\delta_p(\eta_K^{})}$ for
the cyclotomic number $\eta_K^{} := \prod_{a,\chi(a)=1} (1 - \zeta_{D_K}^a)$,
where $\zeta_{D_K}$ is a primitive $D_K$th root of unity
(interpretation of the class number formula via cyclotomic units).
The study given in \cite[Th\'eor\`eme 1.1]{Gr2}, and applied to the number
$\xi = 1 - \zeta_{D_K}$, suggests that if $p\to\infty$, the probability of 
$\delta_p(\eta_K^{}) \geq 1$ for the $\chi_K^{}$-component 
$\langle\, \eta_K^{} \, \rangle_\Z = \langle\,  \xi \, \rangle^{e_{\chi_K}}$, of the 
Galois module generated by $\xi$, tends to $0$ at least as $O(1)\cdot p^{-1}$ 
and conjecturaly as $p^{-({\rm log}({\rm log}(p))/{\rm log}(c_0(\eta_K^{}))-O(1))}$, 
where $c_0(\eta_K^{}) = \vert \eta_K^{} \vert >1$; this does not apply to small~$p$.
This explains the specific difficulties of the $p$-adic case,
which is not surprising since the study of $v_p(\order {\mathcal T}_K)$ 
represents a refinement of Leopoldt's conjecture.
\end{remarks}

We intend to give estimations of $v_p(\order {\mathcal T}_K)$  ($p$ fixed)
related to the discriminant $D_K$ when $K$ varies in a family 
${\mathcal K} \subseteq {\mathcal K}_{\rm real}$  (as in \cite{TV}, we call 
{\it family of number fields} any infinite set of non-isomorphic 
number fields $K$; thus, the condition $D_K \to \infty$ makes sense in 
${\mathcal K}$). In a numerical point of view, we 
shall analyse the set ${\mathcal K}_{\rm real}^{(2)}$ of real quadratic fields 
and the subset ${\mathcal K}_{\rm ab}^{(3)}$ of ${\mathcal K}_{\rm real}^{(3)}$ 
(totally real cubic fields), of cyclic cubic fields of conductor $f$, 
described by the polynomials (see, e.g.,~\cite{ET}):
\begin{equation}\label{polcubic}
\begin{aligned}
P & = X^3+X^2 - \Frac{f -1}{3} \cdot X + \Frac{1+f \, (a-3)}{27}, \  \hbox{if $3 \nmid f$}, \\
P & = X^3-\Frac{f}{3} \cdot X -\Frac{f \,a}{27},\ \  \hbox{if $3 \mid f$},
\end{aligned}
\end{equation}
where $f=\Frac{a^2+27\,b^2}{4}$ with 
$a \equiv 2 \pmod 3$ (if $3 \nmid f$), $a \equiv 6 \pmod 9$ $\&$
$b \not \equiv 0 \pmod 3$ (if $3 \mid f$). Some non-cyclic cubic
fields will also be considered.

\smallskip
In the forthcoming Sections, we deal only with finite places $p$; 
so we simplify some notation in an obvious way.

\section{Direct calculation of $v_p(\order {\mathcal T}_K)$ via PARI/GP}\label{section3}

The programs shall try to verify a $p$-adic analogue of the 
relation \eqref{major}, for quadratic and cubic fields; for each fixed $p$,
they shall give the successive minima of the expression
$\Delta_p(K) := \Frac{{\rm log}_\infty(\sqrt{D_K\,})}{{\rm log}_\infty(p)} - 
v_p(\order {\mathcal T}_K)$ and the successive maxima of:
\begin{equation}\label{cpk}
C_p(K) := \Frac{v_p(\order {\mathcal T}_K) \cdot {\rm log}_\infty(p)}
{{\rm log}_\infty(\sqrt{D_K})},
\end{equation}
when $D_K$ increases in the selected family ${\mathcal K}$.
It seems that a first minimum of $\Delta_p(K)$ 
(on an interval $I$ for $D_K$) is rapidely obtained and is
negative of small absolute value, giving $C_p(K)>1$; 
whence the interest of the computation 
of $C_p(K)$ and the question of the existence of 
${\mathcal C}_p = \sup_{K \in {\mathcal K}} (C_p(K))$.
If ${\mathcal C}_p = \infty$, this means that (for example)
$v_p(\order {\mathcal T}_{K_i}) = {\rm log}_\infty(\sqrt{D_{K_i}}) \cdot 
O\big({\rm log}_\infty({\rm log}_\infty(\sqrt{D_{K_i}}))\big)$
for infinitely many $K_i \in {\mathcal K}$, whence, in our opinion,
the ``excessive relations'' $\order {\mathcal T}_{K_i} \gg \sqrt{D_{K_i}}$.

\smallskip
We shall observe that $\sup_{D <  x}(C_p(K))$ increases
and stabilizes rapidely, for a rather small $D_0$; this means that 
$C_p(K)$ is locally decreasing for $D_K \gg D_0$, whence the interest of 
calculating $C_p(K)$ for discriminants as large as possible to expect the 
existence of $\limsup_{K \in {\mathcal K}} (C_p(K))$ of a different nature 
(see the very instructive example discussed in the \S\,\ref{vph}\,(i)).

\smallskip
We shall adapt the following PARI program \cite[\S\,3.2]{Gr5} (testing the 
$p$-rationality of {\it any number field $K$}), that we recall 
for the convenience of the reader (for this, choose any monic 
irreducible polynomial $P$ and any prime $p$; 
the program gives in $S$ the signature $(r_1, r_2)$
of $K$, then $r := r_2+1$; recall that from $K=bnfinit(P,1)$, one gets
$D_K=component(component(K,7),3)$ and that
from $C8=component(K,8)$, the structure of the class group, 
the regulator and a fundamental system of units are given by 
$component(C8,1)$, $component(C8,2)$, and $component(C8,5)$, 
respectively; whence the class number given by 
$h_K=component(component(C8,1),1)$):

\footnotesize\smallskip
\begin{verbatim}
{P=x^6-123*x^2+1;p=3;K=bnfinit(P,1);n=2;if(p==2,n=3);Kpn=bnrinit(K,p^n);
S=component(component(Kpn,1),7);r=component(component(S,2),2)+1;
print(p,"-rank of the compositum of the Z_",p,"-extensions: ",r);
Hpn=component(component(Kpn,5),2);L=listcreate;e=component(matsize(Hpn),2);
R=0;for(k=1,e,c=component(Hpn,e-k+1);if(Mod(c,p)==0,R=R+1;
listinsert(L,p^valuation(c,p),1)));
print("Structure of the ",p,"-ray class group:",L);
if(R>r,print("rk(T)=",R-r," K is not ",p,"-rational"));
if(R==r,print("rk(T)=",0," K is ",p,"-rational"))}

3-rank of the compositum of the Z_3-extensions: 2
Structure of the 3-ray class group: List([9, 9, 9])
rk(T)=1  K is not 3-rational
\end{verbatim}
\normalsize

\noindent
For any $K \in {\mathcal K}_{\rm real}$, the $p$-invariants of ${\rm Gal}(K(p^n)/ K)$, 
where $K(p^n)$ is the ray class field of modulus $(p^n)$ for any $n \geq 0$, 
are given by the following simplest program (in which $n=0$ gives the 
structure of the $p$-class group):

\footnotesize
\begin{verbatim}
{P=x^2-2*3*5*7*11*13*17;K=bnfinit(P,1);p=2;n=18;Kpn=bnrinit(K,p^n);
Hpn=component(component(Kpn,5),2);L=listcreate;e=component(matsize(Hpn),2);
for(k=1,e,c=component(Hpn,e-k+1);if(Mod(c,p)==0,
listinsert(L,p^valuation(c,p),1)));print(L)}
List([131072, 2, 2, 2, 2, 2])
\end{verbatim}
\normalsize

\smallskip
For $n=0$ one gets $\Cl_K \simeq [2, 2, 2, 2, 2]$.
Taking $n$ large enough in the program allows us to compute 
directely the structure of ${\mathcal T}_K$ as is done by a 
precise (but longer) program in \cite{PV}. This gives the $p$-valuation 
in $vptor$ of $\order {\mathcal T}_K$ as rapidely as possible; 
for this, explain some details about PARI (from~\cite{P}).

\smallskip
Let $K \in {\mathcal K}_{\rm real}$ be linearly disjoint from $\Q^{\rm c}$;
let $K(p^n)$ be the ray class field of modulus $(p^n)$,
$n \geq 2$ (resp. $n \geq 3$) if $p\ne 2$ (resp. $p=2$); indeed,
from \cite[Theorem 2.1]{Gr5}, these conditions on $n$
are sufficient to give the $p$-rank $t_K =: t$ of~${\mathcal T}_K$.
Thus,  for $n$ large enough, the $p$-structure of ${\rm Gal}(K(p^n)/ K)$
is of the form $[p^a, p^{a_1}, \ldots ,  p^{a_t}]$, with
$a \geq a_1 \geq \cdots \geq a_t$, in 
$Hpn := component(component(Kpn,5),2)$, where $Kpn=bnrinit(K,p^n)$
and $p^a = [K(p^n) \cap K^{\rm c} : K]$. 

\smallskip
Then $\order {\mathcal T}_K = [K(p^n) : K] \times p^{-a}$
(up to a $p$-adic unit), where $p^a$ is the largest component given in $Hpn$
(whence the first one in the list, under the condition 
$n \gg \max(a_1, \ldots, a_t)$); so we have only to verify
that $p^n$ is much larger than the exponent $\max(p^{a_1}, \ldots, p^{a_t})$
of ${\mathcal T}_K$.

\smallskip
In practice, and to obtain fast programs, we must look at the order of
magnitude of the results to increase $n$ if necessary; in fact, once
the part $K=bnfinit(P,1)$ of the program is completed, a large value of $n$
does not significantly increase the execution time. 
For instance, with $P=x^2-4194305$ and $p=2$, one gets the 
successive structures for $2 \leq n \leq 16$:

\footnotesize
\begin{verbatim}
2  [2, 2]       6  [32, 16, 2]    10  [512, 256, 2]     14  [8192, 2048, 2]
3  [4, 2, 2]    7  [64, 32, 2]    11  [1024, 512, 2]    15  [16384, 2048, 2] 
4  [8, 4, 2]    8  [128, 64, 2]   12  [2048, 1024, 2]   16  [32768, 2048, 2]
5  [16, 8, 2]   9  [256, 128, 2]  13  [4096, 2048, 2]
\end{verbatim}
\normalsize

\noindent
showing that $n$ must be at least $13$ to give
${\mathcal T}_K \simeq  \Z/2^{11}\Z \times \Z/2\Z$.
In the forthcomming numerical results, if any doubt occurs for a specific field, 
it is sufficient to use the previous program with bigger $n$.

\section{Numerical investigations for real quadratic fields}

Let $K=\Q(\sqrt m)$, $m>0$ squarefree.
We have $\order {\mathcal W}_K = 2$ for $p=2 \ \& \  m \equiv \pm 1 \pmod 8$, 
$\order {\mathcal W}_K =3$ for $p=3 \ \& \ m \equiv -3 \pmod 9$,
and we are mainely concerned with the $p$-class group $\Cl_K$ and the 
normalized regulator~${\mathcal R}_K$.
When $p>2$ is unramified, we have $v_p(\order {\mathcal R}_K) = \delta_p(\varepsilon)$ 
for the fundamental unit $\varepsilon$ of $K$ and if $p=2$ is unramified, we have 
$\delta_2(\varepsilon):= v_2 \Big (\Frac{\varepsilon^{2\cdot(2^f-1)} - 1}{2^{4-f}} \Big)$ 
where $f$ is the residue degree of $2$ in $K$
(see Remarks \ref{defdelta} (i), (ii)).
So, we may compute $v_p(\order {\mathcal T}_K)$ as 
$v_p(\order \Cl_K^{\rm c})+ \delta_p(\varepsilon)+ v_p(\order {\mathcal W}_K)$ and
we shall compare with the direct computation of the structure of ${\mathcal T}_K$
as explain above. Remark that, for $p=2$, $\order \Cl_K = 2\cdot \order \Cl_K^{\rm c}$
(instead of $\order \Cl_K^{\rm c}$) if and only if $m \equiv 2 \pmod 8$, in which case 
$H_K \cap K^{\rm c}=K(\sqrt 2)$ is unramified over $K$.

\smallskip
We have the following result, about $v_p(\order {\mathcal R}_K)$,
when $p \geq 2$ ramifies:

\begin{proposition}\label{casramif}
For $K=\Q(\sqrt m)$ real and $p \mid D_K$, $v_p(\order {\mathcal R}_K)$
is given as follows:

\smallskip
(i) For $p \nmid 6$ ramified, $\order {\mathcal R}_K  \sim
\frac{1}{\sqrt m} \cdot {\rm log}_p(\varepsilon)$ and
$v_p(\order {\mathcal R}_K) = \delta$ if
$v_{\mathfrak p}(\varepsilon^{p-1} - 1) = 1+2\,\delta$, where ${\mathfrak p} \mid p$,
$\delta \geq 0$. 

\smallskip
(ii) For $p=3$ ramified, $\order {\mathcal R}_K \sim
\frac{1}{\sqrt m} \cdot {\rm log}_3(\varepsilon)$ (resp.
$\order {\mathcal R}_K \sim \frac{1}{3\,\sqrt m} \cdot {\rm log}_3(\varepsilon)$)
if $m\not\equiv -3 \pmod 9$ (resp. $m \equiv -3 \pmod 9$).
Then  $v_3(\order {\mathcal R}_K)=(v_{\mathfrak p} (\varepsilon^6 - 1) -2-\delta)/2$
where ${\mathfrak p} \mid 3$ and $\delta=1$ (resp. $\delta=3$) 
if $m\not\equiv -3 \pmod 9$ (resp. $m \equiv -3 \pmod 9$).

\smallskip
(iii) For $p=2$ ramified, $\order {\mathcal R}_K \sim 
\frac{{\rm log}_2(\varepsilon)}{2\,\sqrt m}$
(resp. $\frac{{\rm log}_2(\varepsilon)}{4}$) if $m \not \equiv -1 \pmod 8$
(resp. $m \equiv -1 \pmod 8$). Then,  $v_2(\order {\mathcal R}_K)=
(v_{\mathfrak p} (\varepsilon^4 - 1) - 4 -\delta)/2$, 
where ${\mathfrak p} \mid 2$ and where $\delta=1, 2, 3, 4$
if $m \equiv 2, 3, 6, 7 \pmod 8$, respectively).
\end{proposition}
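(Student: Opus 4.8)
The plan is to compute, in each of the three ramified cases, the normalized $p$-adic regulator $\mathcal R_K$ directly from the exact sequence defining it and from formula \eqref{regulator}, then extract the $p$-adic valuation in terms of the single fundamental unit $\varepsilon=\varepsilon_K$. Since $K=\Q(\sqrt m)$ is quadratic and $p\mid D_K$, there is exactly one prime $\mathfrak p\mid p$ with $e_p=2$, $f=1$, so $\prod_{\mathfrak p\mid p}\mathrm N\mathfrak p=p$ and the group $U_K=U_{\mathfrak p}^1$ has $\Z_p$-rank $1$. First I would specialize \eqref{regulator}: the index $(\Z_p:\mathrm{log}_p(\mathrm N_{K/\Q}(U_K)))$ and the factor $\frac1{\mathcal W_K\cdot p}$ combine, after accounting for $\sqrt{D_K}\sim\sqrt m$ (up to $p$-adic units, $D_K\sim m$), to give the stated shapes $\mathcal R_K\sim\frac1{\sqrt m}\mathrm{log}_p(\varepsilon)$ or $\frac1{3\sqrt m}\mathrm{log}_p(\varepsilon)$ etc., the extra factor of $p$ appearing exactly when $\mathcal W_K\neq1$, i.e. $p=3,\ m\equiv-3\pmod 9$ in (ii) and $p=2,\ m\equiv-1\pmod 8$ in (iii). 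This is the ``normalization bookkeeping'' part and should be routine given \eqref{regulator} and the discussion in Subsection~\ref{abelfunct}.

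The arithmetic heart is then to translate $v_p(\mathrm{log}_p(\varepsilon))$ into a valuation of $\varepsilon^{k}-1$ in $K_{\mathfrak p}$ for the appropriate exponent $k$. For $p\nmid 6$ ramified, $U_{\mathfrak p}^1/\mathrm{tor}$ is a free $\Z_p$-module of rank $1$ on which $\mathrm{log}_p$ is an isometry onto $\mathfrak p$-adic integers shifted appropriately; the key local fact is that for $u\in U_{\mathfrak p}^1$ one has $v_p(\mathrm{log}_p(u))=v_{\mathfrak p}(u-1)$ once $v_{\mathfrak p}(u-1)$ is large enough, and that raising to the $(p-1)$-st power lands one in the range where the logarithm is ``visible.'' Since $\varepsilon$ itself need only satisfy $\varepsilon\equiv1\pmod{\mathfrak p}$ after the principal-unit reduction, I would pass to $\varepsilon^{p-1}$, write $v_{\mathfrak p}(\varepsilon^{p-1}-1)=1+2\delta$ (odd because $e_p=2$ and the leading term of $\mathrm{log}_p$ dominates, forcing the parity), and read off $v_p(\mathrm{log}_p(\varepsilon^{p-1}))=\delta+(\text{normalization shift})$, which after dividing by $[U:\langle\varepsilon\rangle]$-type corrections yields $v_p(\mathcal R_K)=\delta$. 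For $p=3$ ramified one replaces $p-1$ by $6=\mathrm{lcm}(2,3)$ (to kill both the residue-field and the local $\mu_3$ obstruction) and the shift becomes $\delta\in\{1,3\}$ according to whether $\mathcal W_K$ is trivial; the division by $2$ in $(v_{\mathfrak p}(\varepsilon^6-1)-2-\delta)/2$ again reflects $e_p=2$. Case (iii), $p=2$, is handled the same way with exponent $4$ (as in Remarks~\ref{defdelta}(ii) and the quadratic discussion just above the Proposition), and the four values $\delta=1,2,3,4$ for $m\equiv2,3,6,7\pmod8$ come from the four possible ramification/congruence types of $2$ in $\Q(\sqrt m)$, i.e. from $v_{\mathfrak p}(2)$ and whether $\sqrt m$ or $\sqrt{2m}$ generates the relevant local extension.

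I expect the main obstacle to be the $p=2$ case: the $2$-adic logarithm behaves badly on units close to $1$, $\mu_{K_{\mathfrak p}}$ can be $\{\pm1\}$, and the interplay between the ramified prime and the congruence class of $m$ modulo $8$ produces the genuinely case-dependent shift $\delta$. Getting the four values right requires carefully identifying, for each residue class, the exact local field $K_{\mathfrak p}$, its ramification, and the normalization of $\mathrm{log}_2$ dictated by \eqref{regulator}; I would do this by a direct local computation in each of $\Q_2(\sqrt2),\Q_2(\sqrt3),\Q_2(\sqrt6),\Q_2(\sqrt7)$ (equivalently $\Q_2(\sqrt{-1})$), checking the valuation of $\varepsilon^4-1$ against the Artin--Hasse / standard estimate $v(\mathrm{log}_2(u))=v(u-1)$ valid for $v(u-1)>1$. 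Cases (i) and (ii) are comparatively clean because for $p\ge3$ the logarithm is well-behaved on $U_{\mathfrak p}^1$, so there the only subtlety is the parity argument forcing $v_{\mathfrak p}(\varepsilon^{p-1}-1)$ (resp. $v_{\mathfrak p}(\varepsilon^6-1)$) to have the stated form, which follows from $e(\mathfrak p/p)=2$ together with the fact that $\mathrm{log}_p$ maps $U_{\mathfrak p}^1$ onto a $\Z_p$-lattice inside the unramified part of $K_{\mathfrak p}$.
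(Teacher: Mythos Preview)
Your plan is essentially the paper's own approach: the paper's proof is a one-line ``exercise'' that points to exactly the ingredients you list---formula~\eqref{regulator}, the computation of the local norm index $(\Z_p:\mathrm{log}_p(\mathrm N_{K/\Q}(U_K)))$ (which the paper notes is $2$ by local class field theory, since $p$ is ramified), the cases for $\mathcal W_K$, and a direct $p$-adic logarithm calculation in each local field $K_{\mathfrak p}$.

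One small sharpening: your parity justification ``odd because $e_p=2$ and the leading term of $\mathrm{log}_p$ dominates'' is not quite the right mechanism. The paper singles out the fact $\mathrm N_{K/\Q}(\varepsilon)=\pm1$, i.e.\ $\mathrm{Tr}_{K/\Q}(\mathrm{log}_p(\varepsilon))=0$, which forces $\mathrm{log}_p(\varepsilon)\in\sqrt m\cdot\Z_p$ inside $K_{\mathfrak p}$; it is this that makes $v_{\mathfrak p}(\mathrm{log}_p(\varepsilon))$ odd and explains both the appearance of $\frac1{\sqrt m}$ in the normalized regulator and the shape $1+2\delta$ in (i). Once you use that observation, the case-splitting in (ii) and (iii) reduces, as you say, to bookkeeping in each of the four ramified $\Q_2$-extensions and the two $\Q_3$-cases.
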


\begin{proof} Exercise using the expression \eqref{regulator} of 
$\order {\mathcal R}_K$ where ${\rm N}_{K/\Q}(U_K)$ is of index $2$
in $U_\Q$ (local class field theory), the fact that ${\rm N}_{K/\Q}(\varepsilon) = \pm 1$
(i.e., ${\rm Tr}_{K/\Q}({\rm log}_p(\varepsilon)) = 0$),
and the classical computation of a $p$-adic logarithm.
\end{proof}

\begin{remark}
A first information is then the order of magnitude of $\delta_p(\varepsilon)$
as $D_K \to \infty$ ($p$ fixed). Its non-nullity for $p \gg 0$ ($K$ fixed) is 
a deep problem for which we can only give some numerical experiments.
For $p \gg 0$ and any $K \in {\mathcal K}_{\rm real}$, an extensive 
schedule is discussed in \cite{Gr2}, for the
study of $p$-adic regulators of an algebraic number
$\eta \in K^\times$ (giving ``Frobenius determinants''), whose properties are 
characterized by the Galois $\Z_p$-module generated 
by its ``Fermat quotient'' $\frac{1}{p} \,(\eta^{p^f-1}-1)$. 

\smallskip
These questions, applied in our study to 
a ``Minkowski unit'', are probably the explanation of the failure 
of the classical $p$-adic analysis of $\zeta_p$-functions (among many other 
subjects in number theory) since such Fermat quotients problems are neither easier nor more 
difficult than, for instance, the famous problem of Fermat quotients of the 
number $2$, for which no one is able to say, so far, how much $p$ are 
such that $\frac{1}{p}(2^{p-1}-1) \equiv 0 \pmod p$.
\end{remark}

\subsection{Maximal values of $v_p(\order {\mathcal R}_K)$}\label{max}
Consider a prime $p$ fixed and the family ${\mathcal K}_{\rm real}^{(2)}$.
 The following programs find the 
successive maxima of $\delta_p(\varepsilon)$ with the corresponding
increasing $D_K \in [bD, BD]$; the programs use the fact that for 
$p$ unramified, in the inert case, $\varepsilon^{p+1} \equiv 
{\rm N}_{K/\Q}(\varepsilon) \pmod p$, otherwise, 
$\varepsilon^{p-1} \equiv 1 \pmod p$. 

\smallskip
We shall indicate if necessary the maximal value obtained for $C_p(K)$ 
defined by the expression \eqref{cpk} by computing $v_p(\order {\mathcal T}_K)=
\delta_p(\varepsilon) + v_p(\order \Cl_K^{\,\rm c})+v_p(\order {\mathcal W}_K)$.

\subsubsection {\sc Program for $p = 2$ unramified}\label{p=2}
For $p=2$ unramified, we use the particular formula given 
in Remark \ref{defdelta}\,(ii).

\footnotesize
\begin{verbatim}
{bD=5;BD=5*10^7;Max=0;for(D=bD,BD,if(core(D)!=D,next);ss=Mod(D,8);s=0;
if(ss==1,s=1);if(ss==5,s=-1);if(s==0,next);E=quadunit(D)^2;A=(E^(2-s)-1)/(2*s+6);
A=[component(A,2),component(A,3)];delta=valuation(A,2);
if(delta>Max,Max=delta;print("D=",D," delta=",delta)))}
D=21   delta=1   D=1185   delta=8    D=115005  delta=13   D=1051385   delta=19
D=41   delta=3   D=1201   delta=10   D=122321  delta=14   D=12256653  delta=21
D=469  delta=5   D=3881   delta=11   D=222181  delta=16   D=14098537  delta=22 
D=645  delta=6   D=69973  delta=12   D=528077  delta=18   D=28527281  delta=25
\end{verbatim}
\normalsize

\noindent
The next discriminant in $[5\cdot 10^7, 5 \cdot 10^8]$ (two days of computer) 
is $D_K= 214203013$, where $\delta_2(\varepsilon) = 26$, $v_2(h_K)=1$, 
$v_2(\order {\mathcal W}_K)=0$, $v_2(\order{\mathcal T}_K)=27$, $C_2(K)=1.951261$.

\subsubsection {\sc Program for $p = 2$ ramified}
A similar program using Proposition \ref{casramif}(iii) gives analogous 
results for maximal values of $\delta_2(\varepsilon)$:

\footnotesize
\begin{verbatim}
{bm=3;Bm=5*10^7;Max=0;for(m=bm,Bm,s=Mod(m,4);ss=Mod(m,8);
if(core(m)!=m || s==1,next);A=(quadunit(4*m)^4-1)/4;N=norm(A);v=valuation(N,2);
if(s==2,delta=v-3);if(ss==3,delta=v-2);if(ss==7,delta=v-4);delta=delta/2;
if(delta>Max,Max=delta;print("D=",4*m," delta=",delta)))}
D=28   delta=1   D=508    delta=6   D=28664    delta=13   D=15704072  delta=21         
D=124  delta=2   D=1784   delta=7   D=81624    delta=17   D=29419592  delta=22     
D=264  delta=3   D=10232  delta=8   D=1476668  delta=18   D=36650172  delta=23      
D=456  delta=5   D=21980  delta=9   D=2692776  delta=19   D=80882380  delta=28                  
\end{verbatim} 
\normalsize 

\noindent
For $D_K= 80882380=4 \cdot 5 \cdot 239 \cdot 16921$, $\delta_2(\varepsilon) = 28$, 
$v_2(h_K)=2$, $v_2(\order{\mathcal W}_K)=0$, $v_2(\order{\mathcal T}_K)=30$,
$C_2(K) = 2.2840$, whence the influence of genera theory on $C_2(K)$.

\subsubsection {\sc Program for any unramified $p \geq 3$} 
The program can be simplified:

\footnotesize
\begin{verbatim}
{p=3;bD=5;BD=10^8;Max=0;for(D=bD,BD,e=valuation(D,2);M=D/2^e;if(core(M)!=M,next);
if((e==1||e>3)||(e==0 & Mod(M,4)!=1)||(e==2 & Mod(M,4)==1),next);s=kronecker(D,p);
if(s==0,next);E=quadunit(D);nu=norm(E);u=(1+nu-nu*s+s)/2;
A=(E^(p-s)-u)/p;A=[component(A,2),component(A,3)];delta=valuation(A,p);
if(delta>Max,Max=delta;print("D=",D," delta=",delta)))}
D=29   delta=2    D=13861  delta=7    D=321253   delta=12   D=21242636  delta=16  
D=488  delta=4    D=21713  delta=9    D=6917324  delta=13   D=71801701  delta=19 
D=1213 delta=6    D=153685 delta=10   D=13495160 delta=14      
\end{verbatim}
\normalsize

\smallskip\noindent
which gives $\delta_3(\varepsilon) \leq 19$ on the interval $[2, 10^8]$, 
obtained for $D_K = 71801701$, where 
$v_3(h_K)=v_3(\order{\mathcal W}_K)=0$, $v_3(\order{\mathcal T}_K)=19$, 
$C_3(K)=2.307828$.

\subsubsection {\sc Programs for $p = 3$ ramified} We obtain (cf. Proposition \ref{casramif}\,(ii)):

\footnotesize
\begin{verbatim}
{bD=5;BD=10^8;Max=0;for(D=bD,BD,e=valuation(D,2);M=D/2^e;if(core(M)!=M,next);
if(Mod(M,3)!=0||(e==1||e>3)||(e==0 & Mod(M,4)!=1)||(e==2 & Mod(M,4)==1),next);
E=quadunit(D)^6;A=norm(E-1);v=valuation(A,3);if(Mod(D,9)!=-3,delta=(v-3)/2);
if(Mod(D,9)==-3,delta=(v-5)/2);if(delta>Max,Max=delta;
print("D=",D," delta=",delta)))}
D=93   delta=1   D=1896    delta=6   D=2354577  delta=11   D=104326449  delta=15
D=105  delta=2   D=102984  delta=8   D=6099477  delta=12   D=448287465  delta=18
D=492  delta=3   D=168009  delta=10  D=17157729 delta=13
\end{verbatim}
\normalsize

\subsubsection {\sc Program for any ramified $p>3$} 
Let's illustrate this case with a large~$p$:

\footnotesize
\begin{verbatim}
{p=1009;bD=5;BD=10^8;Max=0;for(D=bD,BD,e=valuation(D,2);M=D/2^e;if(core(M)!=M,next);
if(Mod(M,p)!=0||(e==1||e>3)||(e==0 & Mod(M,4)!=1)||(e==2 & Mod(M,4)==1),next);
E=quadunit(D)^(p-1);A=norm(E-1);delta=(valuation(A,p)-1)/2;
if(delta>Max,Max=delta;print("D=",D," delta=",delta)))}
D=1900956   delta=1
\end{verbatim}
\normalsize

\smallskip\noindent
For large $p$ (ramified or not) there are few solutions in a reasonable interval since
we have, roughly speaking, 
${\rm Prob}\big (\delta_{p}(\varepsilon) \geq \delta \big) \approx p^{-\delta}$, 
otherwise, the solutions are often with $\delta_{p}(\varepsilon)=1$, large $D_K$, 
$C_p(K)$ being rather small as we shall analyse now.

\subsection{Experiments for a conjectural upper bound - Quadratic fields}\label{numexp}
We only assume $K \ne \Q(\sqrt 2)$ when $p=2$ to always have $K \cap \Q^{\rm c} = \Q$. 
We have given previously programs for the maximal values of $v_p(\order {\mathcal R}_K)$; 
we now give the behaviour of the whole $v_p(\order {\mathcal T}_K)$ for
increasing discriminants;  for this purpose, we compute:

\smallskip
\centerline{$\Delta_p(K) := 
\Frac{{\rm log}_\infty(\sqrt{D_K})}{{\rm log}_\infty(p)} - v_p(\order {\mathcal T}_K)\ $ and 
$\ C_p(K) := \Frac{v_p(\order {\mathcal T}_K) \cdot {\rm log}_\infty(p)}
{{\rm log}_\infty(\sqrt{D_K})}$.}

\subsubsection{\sc Program for $p=2$}\label{v2tor}
The numerical data are $D_K$, $v_p(\order {\mathcal T}_K)$ (in $vptor$; for this
choose $n$ large enough), the successive $\Delta_p(K)$ (in $Y\!min$) and the 
corresponding $C_p(K)$ (in $Cp$); we omit the $2$-rational fields (for them, $vptor=0$):

\footnotesize
\begin{verbatim}
{p=2;n=36;bD=5;BD=10^6;ymin=5;for(D=bD,BD,e=valuation(D,2);M=D/2^e;
if(core(M)!=M,next);if((e==1||e>3)||(e==0&Mod(M,4)!=1)||(e==2 & Mod(M,4)==1),next);
m=D;if(e!=0,m=D/4);P=x^2-m;K=bnfinit(P,1);Kpn=bnrinit(K,p^n);C5=component(Kpn,5);
Hpn0=component(C5,1);Hpn=component(C5,2);Hpn1=component(Hpn,1);
vptor=valuation(Hpn0/Hpn1,p);Y=log(sqrt(D))/log(p)-vptor;
if(Y<ymin,ymin=Y;Cp=vptor*log(p)/log(sqrt(D));
print("D=",D," m=",m," vptor=",vptor," Ymin=",Y," Cp=",Cp)))}
D=17      m=17        vptor=1      Ymin=1.04373142...     Cp=0.4893
D=28      m=7         vptor=2      Ymin=0.40367746...     Cp=0.8320
D=41      m=41        vptor=4     Ymin=-1.32122399...     Cp=1.4932
D=508     m=127       vptor=7     Ymin=-2.50565765...     Cp=1.5575
D=1185    m=1185      vptor=10    Ymin=-4.89466432...     Cp=1.9587
D=1201    m=1201      vptor=11    Ymin=-5.88498978...     Cp=2.1505
D=3881    m=3881      vptor=12    Ymin=-6.03889364...     Cp=2.0130
D=11985   m=11985     vptor=13    Ymin=-6.22552885...     Cp=1.9189
D=26377   m=26377     vptor=14    Ymin=-6.65650356...     Cp=1.9064
D=81624   m=20406     vptor=20   Ymin=-11.84164710...     Cp=2.4514
\end{verbatim}
\normalsize

The larger computations in \S\,\ref{p=2} show the largest case
$D_K=214203013$ with $h_K=2$ and $\delta_2(\varepsilon) = 26$, giving
$\Delta_2(K) \approx -13.1628$, the best local 
minimum and gives $C_2(K) = 1.951261$. 
For the ramified case $D_K=4 \cdot 20220595$, we obtained
$\delta_2(\varepsilon) = 28$, $C_2(K) = 2.284033$.

\smallskip
But the case $D_K=81624 = 8 \cdot 3 \cdot 19 \cdot 179$, for which $h_K=8$,
with the valuation $v_p(\order {\mathcal T}_K) = 20$, gives $C_2(K) = 2.4514$ 
and shows, once again, that genera theory may modify the results for $p=2$
and more generally for $p \mid d$. Note that in 
the above results, there is no solution $D_K \in [20406, 10^6]$.
To illustrate this, we use the same program for $D_K \in [81628, \, 5 \cdot 10^5]$:

\footnotesize
\begin{verbatim}
D=81628     m=20407    vptor=2     Ymin=6.15838824...   Cp=0.2451
D=81640     m=20410    vptor=4     Ymin=4.15849428...   Cp=0.4902
D=81713     m=81713    vptor=5     Ymin=3.15913899...   Cp=0.6128
D=81788     m=20447    vptor=7     Ymin=1.15980078...   Cp=0.8578
D=82684     m=20671    vptor=8     Ymin=0.16766028...   Cp=0.9794
D=83144     m=20786    vptor=9    Ymin=-0.82833773...   Cp=1.1013
D=84361     m=84361    vptor=10   Ymin=-1.81785571...   Cp=1.2221
D=86284     m=21571    vptor=11   Ymin=-2.80159728...   Cp=1.3417
D=100045    m=100045   vptor=14   Ymin=-5.69485522...   Cp=1.6857
D=115005    m=115005   vptor=16   Ymin=-7.59433146...   Cp=1.9034
D=376264    m=94066    vptor=17   Ymin=-7.73930713...   Cp=1.8357
D=495957    m=495957   vptor=19   Ymin=-9.54007224...   Cp=2.0084
D=1476668   m=369167   vptor=20   Ymin=-9.75304296...   Cp=1.9518
\end{verbatim}
\normalsize

\subsubsection{\sc Program for $p \in [3, 50]$}\label{v3tor}
In this case, genera theory does not intervenne. 
We do not write the cases where $v_p(\order {\mathcal T}_K)=0$ ($p$-rational fields).
The constant $C_p(K)$ has some variations for very small $D_K$ 
but stabilizes and seems locally decreasing for larger $D_K$; 
so we mention the maximal ones, but the last value
is more significant to evaluate an upperbound:

\footnotesize
\begin{verbatim}
{n=16;bD=5;BD=10^6;forprime(p=3,50,print(" ");print("p=",p);ymin=10;
for(D=bD,BD,e=valuation(D,2);M=D/2^e;if(core(M)!=M,next);
if((e==1 || e>3)||(e==0 & Mod(M,4)!=1)||(e==2 & Mod(M,4)==1),next);
m=D;if(e!=0,m=D/4);P=x^2-m;K=bnfinit(P,1);Kpn=bnrinit(K,p^n);
C5=component(Kpn,5);Hpn0=component(C5,1);Hpn=component(C5,2);
Hpn1=component(Hpn,1);vptor=valuation(Hpn0/Hpn1,p);
Y=log(sqrt(D))/log(p)-vptor;if(Y<ymin,ymin=Y;Cp=vptor*log(p)/log(sqrt(D));
print("D=",D," m=",m," vptor=",vptor," Ymin=",Y," Cp=",Cp))))}
      p=3
D=24       m=6         vptor=1      Ymin=0.44639463...   Cp=0.6913
D=29       m=29        vptor=2     Ymin=-0.46747762...   Cp=1.3050
D=105      m=105       vptor=3     Ymin=-0.88189136...   Cp=1.4163
D=488      m=122       vptor=4     Ymin=-1.18266604...   Cp=1.4197
D=1213     m=1213      vptor=6     Ymin=-2.76826302...   Cp=1.8565
D=1896     m=474       vptor=7     Ymin=-3.56498395...   Cp=2.0378
D=13861    m=13861     vptor=8     Ymin=-3.65959960...   Cp=1.8431
D=21713    m=21713     vptor=10    Ymin=-5.45532735...   Cp=2.2003
D=168009   m=168009    vptor=11    Ymin=-5.52410420...   Cp=2.0088
D=321253   m=321253    vptor=12    Ymin=-6.22909046...   Cp=2.0793
      p=5
D=53       m=53        vptor=1      Ymin=0.23344053...   Cp=0.8107
D=73       m=73        vptor=2     Ymin=-0.66709383...   Cp=1.5005
D=217      m=217       vptor=3     Ymin=-1.32864091...   Cp=1.7949
D=1641     m=1641      vptor=4     Ymin=-1.70010976...   Cp=1.7392
D=25037    m=25037     vptor=5     Ymin=-1.85352571...   Cp=1.5890
D=71308    m=17827     vptor=6     Ymin=-2.52836443...   Cp=1.7283
D=304069   m=304069    vptor=7     Ymin=-3.07782014...   Cp=1.7847
(...)
D=4788645  m=4788645   vptor=10    Ymin=-5.22138818...   Cp=2.0926 
      p=7
D=24       m=6         vptor=1     Ymin=-0.18340170...   Cp=1.2246
D=145      m=145       vptor=2     Ymin=-0.72123238...   Cp=1.5640
D=797      m=797       vptor=3     Ymin=-1.28335992...   Cp=1.7476
D=30556    m=7639      vptor=4     Ymin=-1.34640462...   Cp=1.5074
D=92440    m=23110     vptor=5     Ymin=-2.06196222...   Cp=1.7018
D=287516   m=71879     vptor=6     Ymin=-2.77039718...   Cp=1.8578
(...)
D=4354697  m=4354697   vptor=7     Ymin=-3.07207825...   Cp=1.7821
      p=11
D=29       m=29        vptor=1     Ymin=-0.29786428...   Cp=1.4242
D=145      m=145       vptor=2     Ymin=-0.96227041...   Cp=1.9272
D=424      m=106       vptor=3     Ymin=-1.73853259...   Cp=2.3781
D=35068    m=8767      vptor=4     Ymin=-1.81786877...   Cp=1.8330
D=163873   m=163873    vptor=5     Ymin=-2.49637793...   Cp=1.9971
      p=13
D=8        m=2         vptor=1     Ymin=-0.59464276...   Cp=2.4669
D=2285     m=2285      vptor=3     Ymin=-1.49234424...   Cp=1.9898
D=98797    m=98797     vptor=4     Ymin=-1.75808000...   Cp=1.7842
D=382161   m=382161    vptor=5     Ymin=-2.49437601...   Cp=1.9955
      p=17
D=69       m=69        vptor=2     Ymin=-1.25277309...   Cp=2.6765
D=3209     m=3209      vptor=3     Ymin=-1.57516648...   Cp=2.1055
D=8972     m=2243      vptor=4     Ymin=-2.39372069...   Cp=2.4902
D=1631753  m=1631753   vptor=5     Ymin=-2.47545212...   Cp=1.9805
      p=19
D=109      m=109       vptor=1     Ymin=-0.20335454...   Cp=1.2552
D=193      m=193       vptor=2     Ymin=-1.10633396...   Cp=2.2379
D=2701     m=2701      vptor=3     Ymin=-1.65825418...   Cp=2.2359
(...)
D=1482837  m=1482837   vptor=4     Ymin=-1.58706704...   Cp=1.6577
D=6839105  m=6839105   vptor=5     Ymin=-2.32747604...   Cp=1.8709
D=8736541  m=8736541   vptor=5     Ymin=-2.28589639...   Cp=1.8422
      p=23
D=140      m=35        vptor=1     Ymin=-0.21198348...   Cp=1.2690
D=493      m=493       vptor=2     Ymin=-1.01123893...   Cp=2.0227
D=10433    m=10433     vptor=3     Ymin=-1.52451822...   Cp=2.0332
D=740801   m=740801    vptor=4     Ymin=-1.84475964...   Cp=1.8559
      p=29
D=33       m=33        vptor=1     Ymin=-0.48081372...   Cp=1.9261
D=41       m=41        vptor=2     Ymin=-1.44858244...   Cp=3.6270
D=53093    m=53093     vptor=4     Ymin=-2.38448997...   Cp=2.4759
D=30596053 m=30596053  vptor=5     Ymin=-2.44061964...   Cp=1.9536
      p=31
D=8        m=2         vptor=1     Ymin=-0.69722637...   Cp=3.3028
D=6168     m=1542      vptor=2     Ymin=-0.72930075...   Cp=1.5739
D=90273    m=90273     vptor=3     Ymin=-1.33857946...   Cp=1.8056
D=1294072  m=323518    vptor=4     Ymin=-1.95087990...   Cp=1.9520
      p=37
D=33       m=33        vptor=1     Ymin=-0.51584228...   Cp=2.0654
D=3340     m=835       vptor=2     Ymin=-0.87650089...   Cp=1.7801
D=124129   m=124129    vptor=3     Ymin=-1.37588711...   Cp=1.8471
      p=41
D=73       m=73        vptor=1     Ymin=-0.42232716...   Cp=1.7311
D=2141     m=2141      vptor=2     Ymin=-0.96743241...   Cp=1.9369
D=187113   m=187113    vptor=3     Ymin=-1.36552680...   Cp=1.8354
      p=43
D=88       m=22        vptor=1     Ymin=-0.40479944...   Cp=1.6801
D=6520     m=1630      vptor=2     Ymin=-0.83246977...   Cp=1.7130
D=283596   m=70899     vptor=3     Ymin=-1.33094416...   Cp=1.7974
      p=47
D=301      m=301       vptor=1     Ymin=-0.25884526...   Cp=1.3492
D=26321    m=26321     vptor=2     Ymin=-0.67821659...   Cp=1.5131
D=368013   m=368013    vptor=3     Ymin=-1.33566464...   Cp=1.8025
\end{verbatim}
\normalsize

\noindent
The interval $[2, 10^6]$ was not always sufficient (see the cases 
$p=5, 7, 19, 29$, above).
For instance for $p=7$, we ignore if the bound $C_p(K)=1.8578$ can be 
exceeded; we have computed up to $D_K \leq 2 \cdot 10^7$, where 
$v_p(\order {\mathcal T}_K)$ takes at most the values $6$ or $7$ 
with $C_p(K) < 1.7821$. 
So $v_p(\order {\mathcal T}_K) \geq 8$ does exist for greater discriminants, but 
$\frac{8 \cdot {\rm log}_\infty(7)}{{\rm log}_\infty(\sqrt{2 \cdot 10^7})} 
\approx 1.8520$, which is significant of the evolution of $C_p(K)$
as $D_K\to \infty$.

\smallskip \noindent
The same program with $p=3$, $n >18$, taking discriminants, 
$D_K \in [10^6, 2.5 \cdot10^7]$ then in $[10^8, 5 \cdot10^6]$
(two days of computer for each part), gives ($p=3$):
\footnotesize
\begin{verbatim}   
D=1000005    m=1000005    vptor=1      Ymin=5.28771209...    Cp=0.1590
D=1000049    m=1000049    vptor=2      Ymin=4.28773212...    Cp=0.3180
D=1000104    m=250026     vptor=3      Ymin=3.28775715...    Cp=0.4771
D=1000133    m=1000133    vptor=4      Ymin=2.28777034...    Cp=0.6361
D=1000169    m=1000169    vptor=5      Ymin=1.28778673...    Cp=0.7951
D=1000380    m=250095     vptor=6      Ymin=0.28788273...    Cp=0.9542
D=1001177    m=1001177    vptor=8     Ymin=-1.71175481...    Cp=1.2722
D=1014693    m=1014693    vptor=9     Ymin=-2.70565175...    Cp=1.4298
D=1074760    m=268690     vptor=10    Ymin=-3.67947724...    Cp=1.5821
D=1185256    m=296314     vptor=11    Ymin=-4.63493860...    Cp=1.7281
D=2354577    m=2354577    vptor=12    Ymin=-5.32254344...    Cp=1.7970
D=6099477    m=6099477    vptor=13    Ymin=-5.88934151...    Cp=1.8282
D=13495160   m=3373790    vptor=14    Ymin=-6.52791825...    Cp=1.8736
D=21242636   m=5310659    vptor=16    Ymin=-8.32143995...    Cp=2.0837
(...)
D=100025621  m=100025621  vptor=13    Ymin=-4.61627031...    Cp=1.5506
D=104326449  m=104326449  vptor=16    Ymin=-7.59711043...    Cp=1.9041
\end{verbatim}
\normalsize

\smallskip\noindent
The case $D_K=21242636$ leads  to $C_3(K)=2.0837$; but it is difficult to 
predict the behavior of $C_3$ at infinity. In the second part, no data between 
the two discriminants, which suggests an irregular decreasing of $C_3(K)$
as $D_K\to\infty$.

\begin{remark}
From these calculations in the quadratic case, one may consider,
in an heuristic framework, that we have the good following lower 
bounds for ${\mathcal C}_p$:

\smallskip \noindent
${\mathcal C}'_3 \approx 2.0837$, ${\mathcal C}'_5 \approx 2.0926$, 
${\mathcal C}'_7 \approx 1.8578$, 
${\mathcal C}'_{11} \approx 1.9971$, ${\mathcal C}'_{13} \approx 1.9955$, 
${\mathcal C}'_{17} \approx 1.9805$, 
${\mathcal C}'_{19} \approx 2.2379$, ${\mathcal C}'_{23} \approx 1.8559$, 
${\mathcal C}'_{29} \approx 2.4759$,
${\mathcal C}'_{31} \approx 1.9520$, ${\mathcal C}'_{37} \approx 1.8471$, 
${\mathcal C}'_{41} \approx 1.8354$, 
${\mathcal C}'_{43} \approx 1.7974$, ${\mathcal C}'_{47} \approx 1.8025$.
\end{remark}

\subsubsection{\sc Remarks and Heuristics}\label{vph}
Let ${\mathcal K}_{\rm real}^{(2)}$ be the family of real quadratic fields; we consider 
$C_p(K)$ and try to understand its behavior regarding $p$ and $D_K$:

\smallskip
(i) For $p \gg 0$, an estimation of ${\mathcal C}_p^{(2)} := 
\sup_{K \in {\mathcal K}_{\rm real}^{(2)}}(C_p(K))$
is more difficult and, a fortiori, for $\limsup_{K \in {\mathcal K}_{\rm real}^{(2)}}(C_p(K))$; 
for instance, we have found that for $\Q(\sqrt{19})$ and $p_0=13599893$, one has
$v_{p_0}(\order {\mathcal T}_{\Q(\sqrt{19})})=1$, whence 
${\mathcal C}_{p_0}^{(2)} \geq 7.5855$.
The following program can be used for {\it huge values} of $p$ to find quadratic 
fields $K$ such that $v_p(\order {\mathcal R}_K) \geq 1$; in practice one never finds 
$v_p(\order {\mathcal R}_K) \geq 2$ for ``usual'' discriminants. However, for these 
solutions, one must compute $v_p(\order {\mathcal T}_K)$ with the classical program
of Section \ref{section3} to be sure of the result (we treat separately the case $p_0 \mid D_K$).

\smallskip
\footnotesize
\begin{verbatim}
{p=13599893;pp=p^2;for(D=5,5*10^8,e=valuation(D,2);M=D/2^e;if(core(M)!=M,next);
if((e==1||e>3)||(e==0 & Mod(M,4)!=1)||(e==2 & Mod(M,4)==1),next);s=kronecker(D,p);
if(s==0,next);E=quadunit(D);nu=norm(E);u=(1+nu-nu*s+s)/2;P=component(E,1)+Mod(0,pp);
e1=component(E,2);e2=component(E,3);A=Mod(e1+e2*x,P)^(p-s)-u;if(A==0,print(D)))}
\end{verbatim}
\normalsize

\smallskip\noindent
The {\it next discriminants} $D_K > 4\cdot 19$, up to $5 \cdot 10^8$ 
(more that two days of computer), for which 
$v_{p_0}(\order {\mathcal T}_K) \geq 1$ (in fact $=1$), are:

\smallskip\noindent
$37473505$, $45304189$, $104143053$,  $111800589$, $112985161$,
$181148197$,  $239100989$,  
 
\hfill $288517452$, $350532569$, $387058008$, $414929433$, $477524401$,

\smallskip\noindent
giving $C_{p_0}(K)$ =
$1.8837$, $1.8635$, $1.7794$, $1.7726$, $1.7716$, $1.7276$, 
$1.7028$,  $1.6864$, $1.6697$, $1.6613$, $1.6550$, $1.6438$, respectively.

\smallskip\noindent
Thus we notice, as expected, a significant decrease of the function 
$C_{p_0}(K)$ since we did not find any $v_{p_0}(\order {\mathcal T}_K) > 1$, 
until $D_K \leq 5 \cdot 10^8$, knowing that other quadratic fields with 
{\it arbitrary} $v_{p_0}(\order {\mathcal T}_K)$ exist with huge discriminants, as:
\ctl{$D_K=p_0^4+4 = 34209124997537575597791879605$, for which 
$C_{p_0}(K)= 0.4999$.}
This field is the first element of families
$K=\Q \Big (\sqrt{a^2 \!\cdot\! p_0^{2\rho} + b^2}\Big)$, $a \geq 1$, $b \in \{1, 2\}$,
described in Subsection \ref{family}, for which 
$\delta_{p_0}(\varepsilon_K)=\rho-1$, whence
$v_{p_0}(\order {\mathcal T}_K) \geq \rho-1$ and $C_p(K)< 1+ o(1)$.
Note that for $\rho-1=10$ and $p_0=13599893$, $D_K \approx 10^{157}$.

\smallskip\noindent
Unfortunately, we ignore what happens for $5 \cdot 10^8 < D_K < p_0^4+4$ 
because of the order of magnitude; to get $C_{p_0}(K)<1.3$, we must have 
for instance $v_{p_0}(\order {\mathcal T}_K)=1$ and
$D_K > 94334377272$, then $D_K > 9333929793774$ to get $C_{p_0}(K)<1.1$.

\smallskip\noindent
We then have the following alternative: either $C_{p_0}(K) < 7.5855$ for 
all $D_K>4 \cdot 19$, whence ${\mathcal C}_{p_0}^{(2)}=7.5855$, 
or ${\mathcal C}_{p_0}^{(2)}$ is greater than $7.5855$ or infinite.

\smallskip \noindent
The existence of infinitely many $K \in {\mathcal K}_{\rm real}^{(2)}$ such that 
$C_{p_0}(K) > 7.5855$ remains possible but assumes the strong condition
$v_{p_0}(\order {\mathcal T}_K) \!> 0.4618 \cdot {\rm log}_\infty(\sqrt{D_K})$ 
for infinitely many $K \in {\mathcal K}_{\rm real}^{(2)}$. 

\smallskip\noindent
The most credible case should be that, for each $p$, {\it there exist finitely 
many} $K \in {\mathcal K}_{\rm real}^{(2)}$ for which $v_{p}(\order {\mathcal T}_K)
\gg {\rm log}_\infty(\sqrt{D_K})$, whence $C_p(K) \gg 
{\rm log}_\infty(p)$; so for ``almost all'' $K \in {\mathcal K}_{\rm real}^{(2)}$, 
we would have $C_p(K) \ll 1$ (and often $0$ as explained in (iii)), except for 
some critical infinite families for which $C_p(K) \leq 1+o(1)$; if there is no other 
possibilities, ${\mathcal C}_p^{(2)}$ does exist and is equal 
to $\max_{D_K \leq D_0}^{} (C_p(K))$ for a sufficiently large $D_0$.

\medskip
(ii) The existence of ${\mathcal C}_p$ (over ${\mathcal K}_{\rm real}$)
essentially depends on $v_p(\order {\mathcal R}_K)$
since the influence of $v_p(\order \Cl_K^{\rm c})$ seems negligible, which is 
reinforced by classical heuristics on class groups \cite{CL, CM}, or by specific results
in suitable towers \cite [Proposition 7.1]{TV}, then, mainely, by strong conjectures
(and partial proofs) in \cite{EV} as 
$\order \Cl_K \ll_{\epsilon, p, d}\big( \sqrt{\vert D_K \vert} \big)^\epsilon$ 
for any number field of degree $d$, i.e., for all $\epsilon>0$ the 
existence of $C_{\epsilon, p, d}$ such that:
\ctl{${\rm log}_\infty(\order \Cl_K) \leq 
{\rm log}_\infty(C_{\epsilon, p, d})+\epsilon  \cdot {\rm log}_\infty(\sqrt{\vert D_K \vert})$,}
strengthening the classical Brauer theorem (existence of 
an universal constant ${\mathcal C}_0$ such that, 
${\rm log}_\infty(h_K) \leq {\mathcal C}_0 \cdot {\rm log}_\infty(\sqrt{\vert D_K\vert})$
for all number field~$K$); for quadratic and cyclic cubic fields, ${\mathcal C}_0=1$ 
(Remark \ref{note1}).

\medskip
(iii) For any fixed $p$, $\ds\liminf_{K \in {\mathcal K}_{\rm real}^{(2)}} (C_p(K) )=0$ 
(see Byeon \cite[Theorem 1.1]{B}, after Ono, where a lower boud of the 
density of $p$-rational fields is given for $p>3$). Indeed,
as $D_K \to \infty$, statistically, ``almost all'' real quadratic fields $K$ are 
such that $\order {\mathcal T}_K = 1$.

\medskip
(iv) Now, if $K$ is fixed and $p\to\infty$, $\ds\liminf_{p} (C_p(K) )=0$.
One may see this as an unproved generalization, for $v_p(\order {\mathcal R}_K)$,
of theorems of Silverman \cite{Si}, Graves--Murty \cite{GM} and others
about Fermat quotients of rationals, showing 
the considerable difficulties of such subjects, despite the numerical obviousness
since in practice, ``for almost all $p$'', $v_p(\order {\mathcal T}_K)=0$.
We have conjectured, after numerous calculations and heuristics, 
that, for $K \in {\mathcal K}_{\rm real}$ 
fixed, the set of primes $p$, such that ${\mathcal T}_K \ne 1$,
is finite \cite[Conjecture 8.11]{Gr2}, i.e., $C_p(K)=0$ for all $p \gg 0$;
otherwise $\ds\limsup_{p} (C_p(K) )=\infty$.
If this conjecture is false  for the field $K$, there exists 
an infinite set of prime numbers $p_i$ such that $v_{p_i}(\order {\mathcal T}_{K}) \geq 1$ 
giving $C_{p_i}(K) \geq \Frac{{\rm log}_\infty(p_i)}{{\rm log}_\infty(\sqrt{D_{K}})}$ 
arbitrary large as $i\to \infty$. But this is not incompatible with the existence, for 
each $i$, of ${\mathcal C}_{p_i} < \infty$; indeed, in that case, 
$C_{p_i}(K)$ may be very large with decreasing values of the
$C_{p_i}(K')$, for $D_{K'} \gg D_{K}$ as shown, for instance in 
${\mathcal K}_{\rm real}^{(2)}$, by the example given in (i). 

\smallskip\noindent
If, on the contrary, the conjecture is true over 
${\mathcal K}_{\rm real}^{(2)}$ (or more generaly over ${\mathcal K}_{\rm real}$), 
for each fixed non-$p$-rational field $K$,
let $p_K^{}=\ds\sup_{{\mathcal T}_{K,p} \ne 1} (p)$; then it will be interesting 
to have a great lot of $C_{p_K^{}}(K)$, which is of course non-effective.

\subsection{A special family of quadratic fields}\label{family}

Consider, for $p$ fixed, the field:
\ctl{$K=\Q(\sqrt{a^2 \!\cdot\! p^{2\rho} + 1})$, with $\rho \geq 2$, 
$a\geq 1$, $p \nmid a$ ;} 
assuming that $m:=a^2 \cdot p^{2\rho} + 1$ is 
a squarefree integer, its fondamental unit is $\varepsilon_K = a \cdot p^\rho + \sqrt m$ 
and $D_K=m$ (for $a\! \cdot\! p$ even) or $4\,m$  (for $a \cdot p$ odd); 
the case of $m=a^2 \!\cdot\! p^{2\rho} + 4$ would be similar. 
From the formula \eqref{lou}, we have
$h_K < \Frac{1}{2} \cdot \sqrt{D_K}$, 
and an upper bound being $a \cdot p^\rho$, this allows to get
$v_p(\order \Cl_K) \leq \rho + \frac{{\rm log}_\infty(a)}{{\rm log}_\infty(p)}$ to take into 
account the possible (incredible) case where $h_K$ is a maximal $p$th~power. 
As $\delta_p(\varepsilon_K)+ v_2(\order {\mathcal W}_K)=\rho-1$ 
for these fields, it follows:
\ctl{$\rho-1 \leq  v_p(\order {\mathcal T}_K) = 
v_p(\order \Cl_K) + \delta_p(\varepsilon_K) + v_p(\order {\mathcal W}_K) <  
2\,\rho + \frac{{\rm log}_\infty(a)}{{\rm log}_\infty(p)}$.}
Thus, since $\Frac{{\rm log}_\infty(\sqrt {D_K})}{{\rm log}_\infty(p)} 
\approx  \rho + \frac{{\rm log}_\infty(a)}{{\rm log}_\infty(p)}$, 
we have proved, in this particular case, that:

\centerline{$\Frac{\rho-1}{\rho + \frac{{\rm log}_\infty(2 \, a)}{{\rm log}_\infty(p)}} 
\leq C_p(K) < \Frac{2\,\rho + \frac{{\rm log}_\infty(a)}{{\rm log}_\infty(p)}}
{\rho + \frac{{\rm log}_\infty(a)}{{\rm log}_\infty(p)}} \in [1, 2[$.}

\medskip
We shall assume the conjecture that, for all $p$, 
$m :=a^2 \cdot p^{2\rho} + 1$ is squarefree\,\footnote{\,The conjecture
is true for integers of the form $n^2+1$ \cite{H-B}, but we ignore if this remains true
for $n = a \cdot p^{\rho}$, $p$ prime, $\rho \in \N$, $a \geq 1$; 
but this is not so essential (see Remark \ref{nonqf}).} for infinitely many integers 
$\rho \geq 2$. Whence the partial result:

\begin{theorem} 
Let ${\mathcal K}_{\rm real}^{(2)}$ be the family of real quadratic fields and let:
$$C_p(K) := \Frac{v_p(\order {\mathcal T}_K) \cdot 
{\rm log}_\infty(p)}{{\rm log}_\infty(\sqrt {D_K})},\ \ 
\hbox{for $K \in {\mathcal K}_{\rm real}^{(2)}$ and $p \geq 2$.}$$
Then, under the above conjecture on $m :=a^2 \cdot p^{2\rho} + 1$, $\rho \geq 2$,
one has, for each fixed $p$, $C_p(K)\in [0, 2[$ for an infinite subset
of ${\mathcal K}_{\rm real}^{(2)}$.
\end{theorem}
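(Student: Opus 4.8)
The plan is to exhibit the infinite subfamily explicitly and read off the bounds already established in Subsection \ref{family}. First I would fix the prime $p$ and let $\rho$ range over the (conjecturally infinite) set of integers $\rho \geq 2$ for which $m := a^2 \cdot p^{2\rho}+1$ is squarefree; fixing, say, $a = 1$ for definiteness suffices to produce an infinite family, since distinct $\rho$ give distinct discriminants $D_K \in \{m, 4m\}$ and hence non-isomorphic fields, so the condition $D_K \to \infty$ is automatic. For each such $\rho$, set $K = K_\rho := \Q(\sqrt{m})$; then $\varepsilon_K = a\cdot p^\rho + \sqrt m$ is the fundamental unit, and the computation recalled before the statement gives $\delta_p(\varepsilon_K) + v_p(\order{\mathcal W}_K) = \rho - 1$.

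Next I would assemble the three-term decomposition $v_p(\order{\mathcal T}_K) = v_p(\order\Cl_K) + \delta_p(\varepsilon_K) + v_p(\order{\mathcal W}_K)$ (using $v_p(\order\Cl_K^{\,\rm c}) = v_p(\order\Cl_K)$ up to the genus/unit correction, which is harmless here) together with the crude bound $h_K < \frac12\sqrt{D_K}$ from \eqref{lou}, combined with the fact that $a\cdot p^\rho$ is an explicit upper bound for $h_K$ in this family, to get $v_p(\order\Cl_K) \leq \rho + \frac{{\rm log}_\infty(a)}{{\rm log}_\infty(p)}$. This yields the two-sided estimate
$$\rho - 1 \;\leq\; v_p(\order{\mathcal T}_K) \;<\; 2\rho + \Frac{{\rm log}_\infty(a)}{{\rm log}_\infty(p)}.$$
Dividing by ${\rm log}_\infty(\sqrt{D_K}) \approx \big(\rho + \frac{{\rm log}_\infty(a)}{{\rm log}_\infty(p)}\big)\cdot{\rm log}_\infty(p)$ and multiplying by ${\rm log}_\infty(p)$ gives
$$\Frac{\rho - 1}{\rho + \frac{{\rm log}_\infty(2a)}{{\rm log}_\infty(p)}} \;\leq\; C_p(K) \;<\; \Frac{2\rho + \frac{{\rm log}_\infty(a)}{{\rm log}_\infty(p)}}{\rho + \frac{{\rm log}_\infty(a)}{{\rm log}_\infty(p)}},$$
and both the lower and upper fractions lie in $[0,2[$ for all admissible $\rho \geq 2$ (the upper one is bounded above by $2$ since the numerator is less than twice the denominator; the lower one is nonnegative). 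Hence $C_p(K_\rho) \in [0,2[$ for every $\rho$ in the infinite set, which is exactly the claim.

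The only genuinely non-routine ingredient is the input conjecture itself — the squarefreeness of $a^2 p^{2\rho}+1$ for infinitely many $\rho$ — and the theorem is stated conditionally on precisely that, so nothing further is needed there; Heath-Brown's result \cite{H-B} on values $n^2+1$ is the relevant precedent but does not cover the thin subsequence $n = a\cdot p^\rho$. A secondary point to handle carefully is the bookkeeping of $v_p(\order\Cl_K^{\,\rm c})$ versus $v_p(\order\Cl_K)$ and the $\order{\mathcal W}_K$ factor when $p = 2$ or $p = 3$ (the genus-theory and root-of-unity corrections discussed in Section 4), but these only shift the estimates by a bounded amount absorbed into the ${\rm log}_\infty(2a)$ term in the denominator, so they do not affect membership in $[0,2[$. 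If one wishes to avoid even the squarefreeness hypothesis, Remark \ref{nonqf} indicates the squarefree part of $m$ can be used instead with only cosmetic changes to the discriminant estimate, so the conclusion is robust.
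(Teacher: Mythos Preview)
Your proposal is correct and follows essentially the same approach as the paper: the theorem is stated immediately after the displayed two-sided estimate for $C_p(K)$ in Subsection~\ref{family}, and the proof consists precisely of invoking that estimate for the infinite (under the stated conjecture) family $K_\rho = \Q(\sqrt{a^2 p^{2\rho}+1})$. Your additional remarks---that distinct $\rho$ yield non-isomorphic fields, and that the $\Cl_K^{\,\rm c}$ versus $\Cl_K$ bookkeeping is harmless since $p$ is unramified in these $K$---are accurate and make explicit points the paper leaves implicit.
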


Moreover, if we consider the estimation of $v_p(\order \Cl_K)$
largely excessive, as explained in the \S\,\ref{vph}\,(ii), one may 
conjecture that, for the above family of fields
$K=\Q(\sqrt{a^2 \cdot p^{2\rho} + 1})$,  $\rho \geq 2$, one has:
\ctl{$\rho-1 \leq v_p(\order {\mathcal T}_K) < \rho\cdot (1 +o(1))$,}
and the statement of the theorem becomes:

\medskip\noindent
{\it For each $p \geq 2$, $C_p(K)$ is asymptotically equal to $1$ for 
an infinite subset of~${\mathcal K}_{\rm real}^{(2)}$.}

\medskip \noindent
Indeed, $v_p(\order {\mathcal T}_K)$ (in $vptor$)
and $v_p(\order \Cl_K)$ (in $vph$) are given by the following program, to 
illustrate the relation $\rho-1 \leq v_p(\order {\mathcal T}_K) < \rho\cdot (1 +o(1))$. 

\smallskip \noindent
We vary $p$ and $\rho$ in intervals such that, for instance, ${\rm log}_\infty(m)<40$ 
(just choose $a$, $n$ large enough, and copy and paste the program to get  
complete tables):

\smallskip
\footnotesize
\begin{verbatim}
{a=1;B=40;n=26;forprime(p=2,20,for(rho=2,B/(2*log(p)),m=a^2*p^(2*rho)+1;
if(core(m)!=m,next);D=m;if(Mod(m,4)!=1,D=4*m);P=x^2-m;K=bnfinit(P,1);
Kpn=bnrinit(K,p^n);C5=component(Kpn,5);Hpn0=component(C5,1);
Hpn=component(C5,2);Hpn1=component(Hpn,1);vptor=valuation(Hpn0/Hpn1,p);
Cp=vptor*log(p)/log(sqrt(D));h=component(component(component(K,8),1),1);
vph=valuation(h,p);
print("p=",p," m=",m," rho=",rho," vptor=",vptor," Cp=",Cp," vph=",vph)))}

      a=1, p=2, D=m
m=17    rho=2    vptor=1 Cp=0.4893010842... vph=0
m=65    rho=3    vptor=3 Cp=0.9962858772... vph=1
(...)
m=4398046511105    rho=21    vptor=29    Cp=1.3809523809...    vph=10
m=17592186044417    rho=22    vptor=24    Cp=1.0909090909...    vph=3
(...)
m=18014398509481985    rho=27    vptor=29    Cp=1.074074074...    vph=6
m=72057594037927937     rho=28    vptor=26    Cp=0.9285714285...   vph=2

      a=1, p=3, D=4*m
m=82    rho=2    vptor=1    Cp=0.3792886959...    vph=0
m=730    rho=3    vptor=3    Cp=0.8260927150...    vph=1
(...)
m=16677181699666570    rho=17    vptor=17    Cp=0.9642146068...    vph=1
m=150094635296999122    rho=18    vptor=19    Cp=1.0198095452...    vph=2

      a=1, p=5, D=4*m
m=626    rho=2    vptor=1    Cp=0.4113240423...    vph=0
m=15626    rho=3    vptor=2    Cp=0.5829720101...    vph=0
(...)
m=2384185791015626    rho=11    vptor=11    Cp=0.9623227412...    vph=1
m=59604644775390626    rho=12    vptor=11    Cp=0.8849075871...    vph=0

      a=2, p=3, D=m
m=2917    rho=3    vptor=3    Cp=0.8261991487...    vph=1
m=26245    rho=4    vptor=3    Cp=0.6478156494...    vph=0
(...)
m=66708726798666277    rho=17    vptor=16    Cp=0.9074961005...    vph=0
m=600378541187996485    rho=18    vptor=19    Cp=1.0198095452...    vph=2

      a=2, p=5, D=m
m=2501    rho=2    vptor=1    Cp=0.4113870622...    vph=0
m=62501    rho=3    vptor=2    Cp=0.5829745440...    vph=0
(...)
m=9536743164062501    rho=11    vptor=10    Cp=0.8748388557...    vph=0
m=238418579101562501    rho=12    vptor=11    Cp=0.8849075871...    vph=0
\end{verbatim}
\normalsize

For $K=\Q(\sqrt{a^2 \cdot p^{2\rho} + 4})$, $a$ odd, 
$\varepsilon_K = \Frac{a \cdot p^\rho + \sqrt m}{2}$,
$K$ is unramified at $2$ giving a maximal $C_p(K)=1.2222222215...$
(for $a=1$, $p=3$, $\rho=9$, $vptor=11$, $vph=3$):

\footnotesize
\begin{verbatim}
      a=1, p=3, D=m
m=85 rho=2 vptor=1 Cp=0.4945750747656077295917504 vph=0
m=733 rho=3 vptor=3 Cp=0.9991705549452351082457751 vph=1
(...)
m=109418989131512359213 rho=21 vptor=23 Cp=1.0952380952380952380943703 vph=3
m=984770902183611232885 rho=22 vptor=22 Cp=0.9999999999999999999999159 vph=1

      a=1, p=5, D=m
m=629 rho=2 vptor=1 Cp=0.4995050064384236683280022 vph=0
m=15629 rho=3 vptor=2 Cp=0.6666489958698626477868625 vph=0
(...)
m=37252902984619140629 rho=14 vptor=13 Cp=0.9285714285714285714263589 vph=0
m=931322574615478515629 rho=15 vptor=16 Cp=1.0666666666666666666665717 vph=2

      a=1, p=7, D=m
m=2405 rho=2 vptor=1 Cp=0.4998930943437939009946102 vph=0
m=117653 rho=3 vptor=2 Cp=0.6666647253436162691864834 vph=0
(...)
m=3909821048582988053 rho=11 vptor=10 Cp=0.9090909090909090908873656 vph=0
m=191581231380566414405 rho=12 vptor=12 Cp=0.9999999999999999999995529 vph=1

      a=1, p=11, D=m
m=14645 rho=2 vptor=1 Cp=0.4999857604139424915125214 vph=0
m=1771565 rho=3 vptor=2 Cp=0.6666665620428398909421335 vph=0
(...)
m=5559917313492231485 rho=9 vptor=10 Cp=1.1111111111111111110925908 vph=2
m=672749994932560009205 rho=10 vptor=9 Cp=0.8999999999999999999998884 vph=0

      a=1, p=17, D=m
m=24137573 rho=3 vptor=2 Cp=0.6666666601676951315812133 vph=0
m=6975757445 rho=4 vptor=3 Cp=0.7499999999810259247791427 vph=0
(...)
m=168377826559400933 rho=7 vptor=6 Cp=0.8571428571428571423437840 vph=0
m=48661191875666868485 rho=8 vptor=8 Cp=0.9999999999999999999981866 vph=1
\end{verbatim}
\normalsize

\smallskip
One sees, from these excerpts, the weak influence of $vph= v_p(\Cl_K)$ 
giving very few $C_p(K)=1+o(1)$. Larger values of $a$, $p$, yields the 
same kind of results. 

\begin{remark} \label{nonqf}
Without assuming that $m =a^2 \cdot p^{2\rho} \pm 1$ 
(or $m =a^2 \cdot p^{2\rho} \pm 4$) is squarefree (which is indeed impossible for minus signs), 
the same program gives always $C_p(K)$ near $1$ and in any case in $[0, 2[$ 
as far as we have tested this property; of course, if $m= b^2\,m'$ with $m'$ squarefree, the unit 
$\varepsilon' = a \cdot p^\rho + b \cdot \sqrt {m'}$ is not necessarily 
fundamental so that $\delta_p(\varepsilon_K) \leq \delta_p(\varepsilon')$ 
and $D_K = m'$ or $4\,m'$ may be very small (the program deals only 
with non-squarefree integers $m$):

\smallskip
\footnotesize
\begin{verbatim}
{B=60;for(a=1,18,forprime(p=2,19,for(rho=1,B/(2*log(p)),m=a^2*p^(2*rho)+1;
n=rho+6;if(core(m)!=m,P=x^2-m;K=bnfinit(P,1);D=component(component(K,7),3);
Kpn=bnrinit(K,p^n);C5=component(Kpn,5);Hpn0=component(C5,1);
Hpn=component(C5,2);Hpn1=component(Hpn,1);vptor=valuation(Hpn0/Hpn1,p);
Cp=vptor*log(p)/log(sqrt(D));
print("a=",a," p=",p," m=",m," rho=",rho," vptor=",vptor," Cp=",Cp)))))}
\end{verbatim}
\normalsize

\smallskip\noindent
Then the biggest $C_p(K)$ are for trivial cases 
($m=5^2\cdot 41$ and $m=250001=53^2 \cdot 89$):

\footnotesize
\begin{verbatim}
a=1   p=2    D=m=1025      rho=5    vptor=4     Cp=1.4932
a=4   p=5    D=m=250001    rho=3    vptor=2     Cp=1.4342
\end{verbatim}
\end{remark}

\subsection{Reciprocal study}\label{reciprocal}
We fix $p \geq 2$, $\rho \geq 2$, and we try to build units 
of the form $\eta = 1 + p^{\rho} \cdot (X + Y \cdot \sqrt m)$,
where $X, Y \in \Z$ and where $m$ is a squarefree integer.
It is not necessary to consider the case $\frac{X + Y \cdot \sqrt m}{2}$,
$X$ and $Y$ of same parity for $m \equiv 1 \pmod 4$,
since this only concerns the cases $p=2$ (in which case this can
modify $\rho$ into $\rho-1$) and $p=3$ (since any cube
of unit is of the suitable form and this also modifies the choice of $\rho$).

\smallskip
In $K=\Q(\sqrt m)$, $\eta$ may be 
a $p$-power of the fundamental unit $\varepsilon_K$, but this 
goes in the good direction to get an upper bound of $C_p(K)$,
if we use $\delta_p(\eta)$ instead of $\delta_p(\varepsilon_K)$
to compute $v_p(\order {\mathcal T}_K)$,
since $\delta_p(\varepsilon_K) \leq \delta_p(\eta)$.
\begin{lemma}
The number $\eta = 1 + p^{\rho} \cdot (X + Y \cdot \sqrt m)$,
$X, Y \in \Z$, is a unit of $\Q(\sqrt m)$ if and only if $X= p^\rho \cdot a$ and
$a \cdot (2 + p^{2\rho} \cdot a) = m \cdot b^2$
(resp. $a \cdot (1 + 2^{2\rho-2} \cdot a) = m \cdot b^2$)
if $p \ne 2$ (resp. $p=2$), $a, b \in \Z$.
\end{lemma}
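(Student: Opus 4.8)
The plan is to turn the assertion into a statement about integer solutions of a single norm equation. First I would note that, since $X,Y\in\Z$, the element $\eta = 1+p^{\rho}(X+Y\sqrt m)$ lies in $\Z[\sqrt m]\subseteq\mathcal{O}_K$, hence is automatically an algebraic integer; therefore $\eta$ is a unit of $\Q(\sqrt m)$ if and only if $N_{K/\Q}(\eta)=\eta\,\ov\eta=\pm 1$. A direct multiplication gives
\[
N_{K/\Q}(\eta)=(1+p^{\rho}X)^2-m\,p^{2\rho}Y^2=1+2\,p^{\rho}X+p^{2\rho}\big(X^2-mY^2\big),
\]
so from here on the whole lemma is an exercise about the two equations $N_{K/\Q}(\eta)=1$ and $N_{K/\Q}(\eta)=-1$.

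Next I would discard the value $-1$. If $N_{K/\Q}(\eta)=-1$ then $(1+p^{\rho}X)^2+1=m\,p^{2\rho}Y^2$. For odd $p$, reducing modulo $p$ (using $\rho\ge 1$) forces $2\equiv 0\pmod p$, which is impossible; for $p=2$, reducing modulo $4$ (using $\rho\ge 2$) forces $2\equiv 0\pmod 4$, again impossible. Hence every unit of the prescribed shape has norm $+1$, and it remains only to solve
\[
2\,p^{\rho}X+p^{2\rho}\big(X^2-mY^2\big)=0 .
\]

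Then I would extract the divisibility and the Diophantine identity. Dividing the last equation by $p^{\rho}$ gives $2X+p^{\rho}(X^2-mY^2)=0$, so $p^{\rho}\mid 2X$: for odd $p$ this means $p^{\rho}\mid X$, and for $p=2$ the same manipulation (dividing instead by $2^{\rho+1}$) gives $2^{\rho-1}\mid X$. Writing $X=p^{\rho}a$ (resp. $X=2^{\rho-1}a$) and dividing once more by the appropriate power of $p$ collapses the relation to $mY^2=2a+p^{2\rho}a^2=a\big(2+p^{2\rho}a\big)$ (resp. $mY^2=a+2^{2\rho-2}a^2=a\big(1+2^{2\rho-2}a\big)$); taking $b:=Y$ gives exactly the stated condition. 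For the converse, given such $a,b$, I would set $Y:=b$, substitute $X=p^{\rho}a$ (resp. $2^{\rho-1}a$) back into the displayed formula for $N_{K/\Q}(\eta)$, and use the identity to cancel the terms carrying $p^{2\rho}$; what is left is $N_{K/\Q}(\eta)=1$, so $\eta$ is a unit.

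The whole argument is elementary, so there is no deep obstacle; the only point that needs care is the prime $p=2$, where the $2$-adic valuation genuinely forced on $X$ is $\rho-1$ (which is what produces the exponent $2^{2\rho-2}$ in the identity, and which differs from the naive $p^{\rho}$ of the odd case), and where the standing hypothesis $\rho\ge 2$ is used both to exclude the norm value $-1$ and to carry out the divisions cleanly.
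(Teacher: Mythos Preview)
Your argument is correct and essentially identical to the paper's: compute $N_{K/\Q}(\eta)$, discard the value $-1$ by a congruence (the paper just says ``$-1$ is absurd for $\rho\ge 2$''), then reduce $2X+p^\rho(X^2-mY^2)=0$ to the stated Diophantine condition by extracting the $p$-power dividing $X$. You are in fact slightly more careful than the paper in flagging that for $p=2$ one only obtains $X=2^{\rho-1}a$ rather than $2^\rho a$; the paper's statement is a bit loose on this point, though its proof agrees with yours and then remarks that one may shift $\rho$ by $1$ to restore a uniform shape.
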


\begin{proof}
We have ${\rm N}_{K/\Q}(\eta)=\pm 1$ if and only if:
\ctl{$1+  p^{\rho} \cdot (X + Y \cdot \sqrt m) + p^{\rho} \cdot (X - Y \cdot \sqrt m)
+ p^{2 \rho} \cdot (X^2 - m \cdot Y^2)= \pm 1$} 
which is equivalent (since $-1$ is absurd for $\rho \geq 2$) to 
$2\cdot X + p^\rho \cdot X^2 = m\cdot p^\rho \cdot Y^2$.
For $p\ne 2$, this yields $X=  p^\rho \cdot a$, $Y=b$, such that
$a\cdot (2 + p^{2 \rho} \cdot a) = m \cdot b^2$. For $p=2$, one must consider
the relation $a\cdot (1 + 2^{2 \rho - 2} \cdot a) = m \cdot b^2$, whence
in practice the relation $a\cdot (1 + 2^{2 \rho} \cdot a) = m \cdot b^2$
replacing $\rho$ by $\rho-1$.
\end{proof}

So, we shall fix $\rho$ large enough, increase $a$ 
in some interval and write $a \cdot (2 + p^{2 \rho} \cdot a)$ 
(resp. $a \cdot (1 + 2^{2 \rho} \cdot a)$) under the form 
$m \cdot b^2$, $m$ sqarefree. We then compute the successive minima 
of $D_K$ for $K=\Q(\sqrt m)$, to try to get maximal values for $C_p(K)$:

\footnotesize
\begin{verbatim}
{p=3;rho=21;n=rho+6;ba=10^8+1;Ba=2*10^8;pp=p^(2*rho);Dmin=10^100;d=2;
if(p==2,d=1);for(a=ba,Ba,B=a*(d+pp*a);m=core(B);D=m;if(Mod(m,4)!=1,D=4*m);
if(D<Dmin,Dmin=D;b=component(core(B,1),2);P=x^2-m;K=bnfinit(P,1);
Kpn=bnrinit(K,p^n);C5=component(Kpn,5);Hpn0=component(C5,1);Hpn=component(C5,2);
Hpn1=component(Hpn,1);vptor=valuation(Hpn0/Hpn1,p);Cp=vptor*log(p)/log(sqrt(D));
h=component(component(component(K,8),1),1);vph=valuation(h,p);
print("D=",D," a=",a," b=",b," vptor=",vptor," vph=",vph," Cp=",Cp)))}
\end{verbatim}
\normalsize

\noindent
We have done a great lot of experimentations with very large discriminants 
without obtainig any $C_p(K) >2$, except, for $p=2$ and the known case
(see \S\,\ref{v2tor}):

\footnotesize
\begin{verbatim}
D=81624    a=9728    b=557872    vptor=20    vph=3    Cp=2.45147522
\end{verbatim}
\normalsize

\noindent
which corresponds to a too small discriminant since the stabilisation of
$C_p(K)$ seems better and better as soon as $D_K \gg 0$. Moreover,
$v_2(\Cl_K)=3$ in this example.

\smallskip
Let $a \in [10^8+1, 2 \cdot 10^8]$ (an interval of negative values 
of $a$ gives similar results):

\footnotesize
\begin{verbatim}
      p=3, rho=21
D                                     a          b      vptor   vph    Cp
4376759652795686111245843894049436844 100000001  1         22   2      0.5729
1094189935082719682370900209849436840 100000002  2         21   0      0.5560
6474496916274063005939132968034008    100000004  26        21   1      0.5926
(...)
780348725011642441673212              100250343  2374203   21   0      0.8387
97192908950160977396761               100966886  3387724   21   1      0.8717
\end{verbatim}
\normalsize

\smallskip \noindent
There is no solution $a \in  [10^8+966886 ,2 \cdot 10^8]$ giving 
smaller discriminants.

\smallskip
\footnotesize
\begin{verbatim}
      p=2, rho=30, n=2*rho
D                                     a          b     vptor   vph     Cp
11529215276652771834290899906846977   100000001  1        35   5       0.6186
17055053207700727651215465398745      100000004  26       42   11      0.8096
(...)
48025975228418415280613               100175668  490822   37   6       0.9821
28578131029527067857561               100311617  637139   34   4       0.9115
617974038061148975453                 100469200  4339580  36   4       1.0424
\end{verbatim}
\normalsize

Same remarks as for the case $p=3$; despite genera theory, it seems that $C_p(K)$
remains close to 1 and is not increasing substantially in the process.

\section{Numerical investigations for cyclic cubic fields}

For the computations in the set ${\mathcal K}_{\rm ab}^{(3)}$ of
cyclic cubic fields, we shall use the
direct calculation of $\order {\mathcal T}_K$ from the program 
testing the $p$-rationality, taking $n$ large~enough. 

\smallskip
See \cite{HZ} for statistics on $v_p(R_{K,p}) = v_p(\order {\mathcal R}_K)+2$ 
(resp. $v_p(\order {\mathcal R}_K)+1$) in the non-ramified (resp. ramified) case
for cyclic cubic fields of conductors up to $10^8$; this gives, for cubic fields, 
the analogue of the computation of $\delta_p(\varepsilon)$ for quadratic fields 
in Subsection \ref{max}. 

\smallskip
Note that, due to Galois action, the integers $v_p(\order {\mathcal T}_K)$ 
are even if $p \equiv 2 \pmod 3$ and arbitrary if not (same remark for 
$v_p(\order \Cl_K)$ and $v_p(\order {\mathcal R}_K)$); then
$v_2(\order {\mathcal W}_K)=2$ if $2$ splits in $K$, otherwise
$v_2(\order {\mathcal W}_K)=0$ and $v_p(\order {\mathcal W}_K)=0$
for $p>2$.

\subsection{Maximal values of $v_p(\order {\mathcal T}_K)$}

The program uses the well-known classification of cyclic cubic 
fields \cite{ET} with conductor $f_K \leq B\!f$ (see the formulas 
\ref{polcubic} giving the corresponding polynomials 
defining $K$), and processes as for the quadratic case.
We give first the case $p = 3$ to see the influence of genera theory;
we compute the successive maxima of $v_p(\order {\mathcal T}_K)$ 
(in $vptor$) with the corresponding $f_K$ and the polynomial 
defining the field of conductor $f_K$. We print in the first line the 
maximal value obtained for $C_p(K)$ in the selected interval.

\smallskip
Recall that $D_K = f_K^2$, where
$f_K=f'_K$ or $9 \cdot f'_K$ with $f'_K=\ell_1 \cdots \ell_t$, for
distinct primes $\ell_i \equiv 1 \pmod 3$:

\smallskip
\footnotesize
\begin{verbatim}
{p=3;n=26;bf=7;Bf=10^7;Max=0;for(f=bf,Bf,e=valuation(f,3);if(e!=0 & e!=2,next);
F=f/3^e;if(Mod(F,3)!=1 || core(F)!=F,next);F=factor(F);Div=component(F,1);
d=component(matsize(F),1);for(j=1,d-1,D=component(Div,j);if(Mod(D,3)!=1,break));
for(b=1,sqrt(4*f/27),if(e==2 & Mod(b,3)==0,next);A=4*f-27*b^2;
if(issquare(A,&a)==1,if(e==0,if(Mod(a,3)==1,a=-a);
P=x^3+x^2+(1-f)/3*x+(f*(a-3)+1)/27);
if(e==2,if(Mod(a,9)==3,a=-a);P=x^3-f/3*x-f*a/27);
K=bnfinit(P,1);Kpn=bnrinit(K,p^n);C5=component(Kpn,5);Hpn0=component(C5,1);
Hpn=component(C5,2);Hpn1=component(Hpn,1);
vptor=valuation(Hpn0/Hpn1,p);Cp=vptor*log(p)/log(f);
if(vptor>Max,Max=vptor;print("f=",f," vptor=",vptor," P=",P," Cp=",Cp)))))}
      p=3          Cp=1.1492
f=19        vptor=1    P=x^3 + x^2 - 6*x - 7
f=199       vptor=2    P=x^3 + x^2 - 66*x + 59
f=427       vptor=4    P=x^3 + x^2 - 142*x - 680
f=1843      vptor=5    P=x^3 + x^2 - 614*x + 3413
f=2653      vptor=6    P=x^3 + x^2 - 884*x - 8352
f=17353     vptor=7    P=x^3 + x^2 - 5784*x - 145251
f=30121     vptor=8    P=x^3 + x^2 - 10040*x + 306788
f=114079    vptor=9    P=x^3 + x^2 - 38026*x + 2822399
f=126369    vptor=10   P=x^3 - 42123*x + 3046897
f=355849    vptor=11   P=x^3 + x^2 - 118616*x - 15235609
f=371917    vptor=12   P=x^3 + x^2 - 123972*x + 15854684
f=1687987   vptor=15   P=x^3 + x^2 - 562662*x - 116533621   
     p=2, n=36    Cp=1.2475
f=31        vptor=2    P=x^3 + x^2 - 10*x - 8
f=171       vptor=6    P=x^3 - 57*x - 152
f=2689      vptor=8    P=x^3 + x^2 - 896*x + 5876
f=6013      vptor=12   P=x^3 + x^2 - 2004*x - 32292
f=6913      vptor=13   P=x^3 + x^2 - 2304*x - 256
f=311023    vptor=16   P=x^3 + x^2 - 103674*x + 5068523
f=544453    vptor=18   P=x^3 + x^2 - 181484*x - 19862452
f=618093    vptor=24   P=x^3 - 206031*x + 21289870          
      p=7         Cp=1.3955
f=9         vptor=1    P=x^3 - 3*x + 1
f=313       vptor=2    P=x^3 + x^2 - 104*x + 371
f=721       vptor=3    P=x^3 + x^2 - 240*x - 988
f=1381      vptor=4    P=x^3 + x^2 - 460*x - 1739
f=29467     vptor=6    P=x^3 + x^2 - 9822*x - 20736
f=177541    vptor=7    P=x^3 + x^2 - 59180*x + 3051075
f=1136587   vptor=10   P=x^3 + x^2 - 378862*x + 58428991    
\end{verbatim}
\normalsize

\subsection{Experiments for a conjectural upper bound -- Cubic fields}
In the same way as for quadratic fields, we give, for each prime $p$, 
the successive minima of 
$\Delta_p(K) = \Frac{{\rm log}_\infty(f_K)}{{\rm log}_\infty(p)} 
- v_p(\order {\mathcal T}_K)$ (in $Y\!min$) with the value of
$C_p(K)=\Frac{v_p(\order {\mathcal T}_K) \cdot {\rm log}_\infty(p)}
{{\rm log}_\infty(f_K)}$ (in $Cp$), obtained for some polynomial $P$ and the 
corresponding conductor $f_K$:

\smallskip
\footnotesize
\begin{verbatim}
{n=36;bf=7;Bf=5*10^6;forprime(p=2,50,ymin=10;print("p="p);for(f=bf,Bf,
e=valuation(f,3);if(e!=0 & e!=2,next);F=f/3^e;if(Mod(F,3)!=1||core(F)!=F,next);
F=factor(F);Div=component(F,1);d=component(matsize(F),1);
for(j=1,d-1,D=component(Div,j);if(Mod(D,3)!=1,break));
for(b=1,sqrt(4*f/27),if(e==2 & Mod(b,3)==0,next);A=4*f-27*b^2;
if(issquare(A,&a)==1,if(e==0,if(Mod(a,3)==1,a=-a);
P=x^3+x^2+(1-f)/3*x+(f*(a-3)+1)/27);
if(e==2,if(Mod(a,9)==3,a=-a);P=x^3-f/3*x-f*a/27);
K=bnfinit(P,1);Kpn=bnrinit(K,p^n);C5=component(Kpn,5);Hpn0=component(C5,1);
Hpn=component(C5,2);Hpn1=component(Hpn,1);vptor=valuation(Hpn0/Hpn1,p);
Y=log(f)/log(p)-vptor;if(Y<ymin,ymin=Y;print(P);Cp=vptor*log(p)/log(f);
print("f=",f," vptor=",vptor," Ymin=",Y," Cp=",Cp))))))}
\end{verbatim}
\normalsize

\noindent
The first minimum occurs for $f := f_K=7$ and $vptor := v_p(\order {\mathcal T}_K)=0$; 
we omit these cases of $p$-rationality. For some $p$, we have been obliged to 
consider larger conductors $f$ to get significant solutions, especially for
$p=11$ for which the first non-trivial example is for $f=5000059$
and $P=x^3 + x^2 - 1666686\,x - 408523339$.

\footnotesize
\begin{verbatim}
    p=2,  Cp=1.247565
P=x^3 - 57*x - 152
f=171        vptor=6         Ymin=1.41785251...    Cp=0.8088
P=x^3 + x^2 - 2004*x - 32292
f=6013       vptor=12        Ymin=0.55386924...    Cp=0.9559
P=x^3 + x^2 - 2304*x - 256
f=6913       vptor=14       Ymin=-1.24490378...    Cp=1.0976
P=x^3 - 206031*x + 21289870
f=618093     vptor=24       Ymin=-4.76253559...    Cp=1.2475
    p=3,  Cp=1.149252
P=x^3 + x^2 - 6*x - 7
f=19         vptor=1         Ymin=1.68014385...    Cp=0.3731
P=x^3 + x^2 - 142*x - 680
f=427        vptor=4         Ymin=1.51312239...    Cp=0.7255
P=x^3 + x^2 - 884*x - 8352
f=2653       vptor=6         Ymin=1.17582211...    Cp=0.8361
P=x^3 - 42123*x + 3046897
f=126369     vptor=10        Ymin=0.69254513...    Cp=0.9352
P=x^3 + x^2 - 118616*x - 15235609
f=355849     vptor=11        Ymin=0.63491606...    Cp=0.9454
P=x^3 + x^2 - 123972*x + 15854684
f=371917     vptor=12       Ymin=-0.32488392...    Cp=1.0278
P=x^3 + x^2 - 562662*x - 116533621
f=1687987    vptor=15       Ymin=-1.94803671...    Cp=1.1492
    p=5,  Cp=1.462906
P=x^3 + x^2 - 50*x - 123
f=151        vptor=2         Ymin=1.11741123...    Cp=0.6415
P=x^3 + x^2 - 1002*x + 6905
f=3007       vptor=4         Ymin=0.97608396...    Cp=0.8038
P=x^3 + x^2 - 2214*x + 19683
f=6643       vptor=8        Ymin=-2.53143306...    Cp=1.4629
    p=7,  Cp=1.395563
P=x^3 - 3*x + 1
f=9          vptor=1         Ymin=0.12915006...    Cp=0.8856
P=x^3 + x^2 - 460*x - 1739
f=1381       vptor=4        Ymin=-0.28422558...    Cp=1.0765
P=x^3 + x^2 - 9822*x - 20736
f=29467      vptor=6        Ymin=-0.71145865...    Cp=1.1345
P=x^3 + x^2 - 59180*x + 3051075
f=177541     vptor=7        Ymin=-0.78853291...    Cp=1.1269
P=x^3 + x^2 - 378862*x + 58428991
f=1136587    vptor=10       Ymin=-2.83443766...    Cp=1.3955
    p=11,  Cp=0.621490
P=x^3 + x^2 - 1666686*x - 408523339
f=5000059    vptor=2         Ymin=4.43270806...    Cp=0.3109
P=x^3 - 1680483*x - 503584739
f=5041449    vptor=4         Ymin=2.43614601...    Cp=0.6215
    p=13,  Cp=1.632521
P=x^3 + x^2 - 20*x - 9
f=61         vptor=1         Ymin=0.60271151...    Cp=0.6239
P=x^3 + x^2 - 196*x - 349
f=589        vptor=2         Ymin=0.48676495...    Cp=0.8042
P=x^3 + x^2 - 1064*x + 12299
f=3193       vptor=3         Ymin=0.14576042...    Cp=0.9536
P=x^3 + x^2 - 1824*x + 8919
f=5473       vptor=4        Ymin=-0.64415121...    Cp=1.1919
P=x^3 + x^2 - 19920*x + 615317
f=59761      vptor=7        Ymin=-2.71215372...    Cp=1.6325
    p=17,  Cp=0.910481
P=x^3 - 399*x - 3059
f=1197       vptor=2         Ymin=0.50160254...    Cp=0.7994
P=x^3 - 84837*x + 1046323
f=254511     vptor=4         Ymin=0.39327993...    Cp=0.9105
    p=19,  Cp=0.974463
P=x^3 + x^2 - 30*x + 27
f=91         vptor=1         Ymin=0.53199286...    Cp=0.6527
P=x^3 + x^2 - 404*x + 629
f=1213       vptor=2         Ymin=0.41161455...    Cp=0.8293
P=x^3 - 3477*x - 26657
f=10431      vptor=3         Ymin=0.14237703...    Cp=0.9547
P=x^3 + x^2 - 1213944*x - 503921781
f=3641833    vptor=5         Ymin=0.13102760...    Cp=0.9744
    p=23,  Cp=0.880087
P=x^3 + x^2 - 1060*x - 11428
f=3181       vptor=2         Ymin=0.57214663...    Cp=0.7775
P=x^3 + x^2 - 515154*x - 19633104
f=1545463    vptor=4         Ymin=0.54500411...    Cp=0.8801
    p=29,  Cp=1.569666
P=x^3 + x^2 - 24*x - 27
f=73         vptor=2        Ymin=-0.72584422...    Cp=1.5696
    p=31,  Cp=0.981745
P=x^3 + x^2 - 30*x + 27
f=91         vptor=1         Ymin=0.31359240...    Cp=0.7613
P=x^3 - 12027*x + 388873
f=36081      vptor=3         Ymin=0.05578357...    Cp=0.9817
    p=37,  Cp=1.119764 
P=x^3 - 39*x - 26
f=117        vptor=1         Ymin=0.31882641...    Cp=0.7582
P=x^3 + x^2 - 5300*x + 119552
f=15901      vptor=3        Ymin=-0.32086480...    Cp=1.1197
    p=41,  Cp=0.976052
P=x^3 + x^2 - 672*x - 2764
f=2017       vptor=2         Ymin=0.04906930...    Cp=0.9760
    p=43,  Cp=0.914939
P=x^3 + x^2 - 20*x - 9
f=61         vptor=1         Ymin=0.09296866...    Cp=0.9149
    p=47,  Cp=0.878952
P=x^3 + x^2 - 2126*x + 11813
f=6379       vptor=2         Ymin=0.27543656...    Cp=0.8789
\end{verbatim}
\normalsize

\section{Examples of non-Galois totally real number fields}
We shall consider (non necessarily Galois) cubic fields, 
with an approach using randomness.
The tested polynomials of dgree $3$ define almost always
Galois groups isomorphic to $S_3$. It is more difficult to find non-$p$-rational 
fields for large $p$ and to obtain a lower bound of ${\mathcal C}_p^{(3)}$ for the 
family ${\mathcal K}_{\rm real}^{(3)}$ of totally real cubic fields.

\subsection {Program for a given cubic polynomial and increasing $p$}
The program concerns fields $K$ defined by $P=x^3+a\,x^2+b\,x+1$,
for random $a, b$ and increasing $p$ in $[2, 10^5]$.
It tests the irreducibility of $P$ and that $D_K>0$ (real roots). 
We give only the non-$p$-rational cases for which one prints 
the corresponding $C_p(K)$. 

\footnotesize
\begin{verbatim}
{n=4;N=100;bp=2;Bp=10^5;ymin=10;a=random(N);b=random(N);P=x^3+a*x^2+b*x+1;
if(polisirreducible(P)==1 & poldisc(P)>0,print(P);K=bnfinit(P,1);
D=component(component(K,7),3);forprime(p=bp,Bp,Kpn=bnrinit(K,p^n);
C5=component(Kpn,5);Hpn0=component(C5,1);Hpn=component(C5,2);
Hpn1=component(Hpn,1);vptor=valuation(Hpn0/Hpn1,p);Y=log(sqrt(D))/log(p)-vptor;
if(vptor > 0 & Y<ymin,ymin=Y;Cp=vptor*log(p)/log(sqrt(D));
print("p=",p," vptor=",vptor," Ymin=",Y," Cp=",Cp))))}
\end{verbatim}
\normalsize

\noindent
We obtain, after several tries and $p$ up to $10^5$,
omitting the small values of $C_p(K)$:

\smallskip
\footnotesize
\begin{verbatim}
    P=x^3 + 21*x^2 + 47*x + 1
p=11          vptor=1         Ymin=1.75210757...
p=523         vptor=1         Ymin=0.05426629...
p=3517        vptor=1        Ymin=-0.19179768...    Cp=1.2373
p=173483      vptor=1        Ymin=-0.45297114...    Cp=1.8280
    P=x^3 + 19*x^2 + 51*x + 1
p=487         vptor=1        Ymin=-0.40614414...    Cp=1.6839
    P=x^3 + 92*x^2 + 52*x + 1
p=18637       vptor=1        Ymin=-0.14697706...    Cp=1.1723
    P=x^3 + 99*x^2 + 23*x + 1
p=73          vptor=1         Ymin=0.47867182... 
p=15803       vptor=1        Ymin=-0.34379282...    Cp=1.5239
p=145259      vptor=1       Ymin =-0.46625984...    Cp=1.8735
p=622519      vptor=1       Ymin =-0.52447869...    Cp=2.1029
    P=x^3 + 98*x^2 + 62*x + 1
p=3           vptor=2         Ymin=3.86940839...
p=61          vptor=1         Ymin=0.56857262...
p=37549       vptor=1        Ymin=-0.38783270...    Cp=1.63354
    P=x^3 + 87*x^2 + 74*x + 1
p=5441        vptor=1         Ymin=0.01344518...    Cp=0.9867
    P=x^3 + 73*x^2 + 67*x + 1
p=6133        vptor=1        Ymin=-0.03273397...    Cp=1.0338
    P=x^3 + 19*x^2 + 83*x + 1
p=61          vptor=1        Ymin=-0.52664318...    Cp=2.1126
p=5419        vptor=1        Ymin=-0.77366996...    Cp=4.4183
p=12703       vptor=1        Ymin=-0.79407472...    Cp=4.8561
\end{verbatim}
\normalsize

\smallskip\noindent
Note that by accident, $P=x^3 + 19\,x^2 + 83\,x + 1$, with a large 
$C_{12703}(K) \approx 4.8561$, defines the cyclic cubic field $K$ 
of conductor $7$ (in some sense, an analogue of $K=\Q(\sqrt {19})$
with $p_0=13599893$ for which $C_{p_0}(K) \approx 7.5856$, 
see \S\,\ref{vph}\,(i)). 

\smallskip \noindent
But the forthcoming conductors $f>7$, up to $4 \cdot10^6$, 
give decreasing $C_{12703}(K)$, as shown by the following excerpts, 
where no $v_p(\order {\mathcal T}_K) \geq 2$ were found with $p=12703$:

\smallskip
\footnotesize
\begin{verbatim}
f=7         vptor=1   x^3 + x^2 - 2*x - 1                Cp=4.856130
f=17767     vptor=1   x^3 + x^2 - 5922*x + 17109         Cp=0.965712
f=54649     vptor=1   x^3 + x^2 - 18216*x - 931057       Cp=0.866244
f=101839    vptor=1   x^3 + x^2 - 33946*x + 1059880      Cp=0.819484
(...)
f=497647    vptor=1   x^3 + x^2 - 165882*x + 7114509     Cp=0.720372
f=547903    vptor=1   x^3 + x^2 - 182634*x - 12804696    Cp=0.715127
(...)
f=859621    vptor=1   x^3 + x^2 - 286540*x + 49348613    Cp=0.691556
f=865189    vptor=1   x^3 + x^2 - 288396*x - 7818745     Cp=0.691229
(...)
f=1680543   vptor=1   x^3 - 560181*x + 55084465          Cp=0.659214
f=1744477   vptor=1   x^3 + x^2 - 581492*x - 143305555   Cp=0.657501
(...)
f=2477313   vptor=1   x^3 - 825771*x + 262870435         Cp=0.641839
f=2486871   vptor=1   x^3 - 828957*x - 138988457         Cp=0.641671
(...)
f=3616141   vptor=1   x^3 + x^2 - 1205380*x + 483625376  Cp=0.625762
f=3628081   vptor=1   x^3 + x^2 - 1209360*x - 96883200   Cp=0.625626
(...)
f=4036591   vptor=1   x^3 + x^2 - 1345530*x + 122293757  Cp=0.621237
f=4037779   vptor=1   x^3 + x^2 - 1345926*x - 499488217  Cp=0.621225
\end{verbatim}
\normalsize

\subsection{Program for a given $p$ and random cubic polynomials}

The program tries polynomials in a random way, so that
the discriminants are not obtained in the natural order; we then write, in the first line,
the largest $C_p(K)$ obtained:

\footnotesize
\begin{verbatim}
{p=3;N=1000;n=18;ymin=10;for(k=1,10^6,a=random(N);b=random(N);c=random(N);
P=x^3+a*x^2+b*x+c;if(polisirreducible(P)==1 & poldisc(P)>0,K=bnfinit(P,1);
D=component(component(K,7),3);Kpn=bnrinit(K,p^n);C5=component(Kpn,5);
Hpn0=component(C5,1);Hpn=component(C5,2);Hpn1=component(Hpn,1);
vptor=valuation(Hpn0/Hpn1,p);Y=log(sqrt(D))/log(p)-vptor;
if(vptor>0 & Y<ymin,ymin=Y;Cp=vptor*log(p)/log(sqrt(D));
print("P=",P," vptor=",vptor," Ymin=",Y," Cp=",Cp))))}
    p=2      Cp=1.497370
P=x^3 + 315*x^2 + 151*x + 13      vptor=6      Ymin=4.62049695...
P=x^3 + 44*x^2 + 388*x + 962      vptor=7      Ymin=2.65795067...
P=x^3 + 78*x^2 + 498*x + 584      vptor=6      Ymin=2.33817139...
P=x^3 + 473*x^2 + 759*x + 90      vptor=12     Ymin=1.79924824...
P=x^3 + 176*x^2 + 760*x + 472     vptor=14    Ymin=-0.65040380...
P=x^3 + 30*x^2 + 165*x + 220      vptor=12    Ymin=-3.98594984...
    p=3      Cp=1.042763
P=x^3 + 57*x^2 + 251*x + 70       vptor=4      Ymin=2.95145981...
P=x^3 + 93*x^2 + 396*x + 396      vptor=4      Ymin=2.08419811...
P=x^3 + 53*x^2 + 602*x + 140      vptor=6      Ymin=1.91171871...
P=x^3 + 143*x^2 + 672*x + 617     vptor=8      Ymin=1.71414906...
P=x^3 + 360*x^2 + 698*x + 132     vptor=4      Ymin=1.11320078...
P=x^3 + 194*x^2 + 649*x + 440     vptor=7      Ymin=1.02340828...
P=x^3 + 38*x^2 + 343*x + 722      vptor=6      Ymin=0.41712275...
P=x^3 + 77*x^2 + 512*x + 874      vptor=8     Ymin=-0.32807458...
    p=5      Cp=1.238605
P=x^3 + 177*x^2 + 590*x + 456     vptor=1      Ymin=4.94615149...
P=x^3 + 222*x^2 + 789*x + 180     vptor=2      Ymin=1.62797441...
P=x^3 + 45*x^2 + 362*x + 772      vptor=3      Ymin=1.32811388...
P=x^3 + 83*x^2 + 400*x + 251      vptor=2      Ymin=1.22069007...
P=x^3 + 197*x^2 + 718*x + 508     vptor=8     Ymin=-1.54112474...
    p=7      Cp=1.201178
P=x^3 + 784*x^2 + 964*x + 288     vptor=1      Ymin=3.97483926...
P=x^3 + 505*x^2 + 710*x + 134     vptor=2      Ymin=2.57552488...
P=x^3 + 73*x^2 + 492*x + 196      vptor=3      Ymin=1.85163167...
P=x^3 + 57*x^2 + 695*x + 263      vptor=1      Ymin=1.35093638...
P=x^3 + 95*x^2 + 839*x + 252      vptor=5      Ymin=0.64570147...
P=x^3 + 114*x^2 + 804*x + 142     vptor=2     Ymin=-0.37221306...
P=x^3 + 97*x^2 + 829*x + 122      vptor=5     Ymin=-0.83742084...
    p=19     Cp=1.139412
P=x^3 + 50*x^2 + 631*x + 470      vptor=1      Ymin=1.58556226...
P=x^3 + 57*x^2 + 777*x + 801      vptor=1      Ymin=1.54028119...
P=x^3 + 549*x^2 + 732*x + 39      vptor=3      Ymin=0.69038895...
P=x^3 + 93*x^2 + 891*x + 383      vptor=2      Ymin=0.64611301...
P=x^3 + 123*x^2 + 375*x + 217     vptor=1      Ymin=0.46422353...
P=x^3 + 226*x^2 + 777*x + 408     vptor=2      Ymin=0.20875475...
P=x^3 + 196*x^2 + 849*x + 918     vptor=2     Ymin=-0.24470848...
    p=1009    Cp=1.227512
P=x^3 + 171*x^2 + 667*x + 604     vptor=1      Ymin=0.49598190...
P=x^3 + 89*x^2 + 567*x + 36       vptor=1      Ymin=0.37961552...
P=x^3 + 54*x^2 + 435*x + 719      vptor=1      Ymin=0.29433117...
P=x^3 + 93*x^2 + 636*x + 944      vptor=1      Ymin=0.07490160...
P=x^3 + 432*x^2 + 347*x + 19      vptor=1     Ymin=-0.06432442...
P=x^3 + 130*x^2 + 942*x + 899     vptor=1     Ymin=-0.06692434...
P=x^3 + 70*x^2 + 553*x + 735      vptor=1     Ymin=-0.18534377...
\end{verbatim}
\normalsize

\begin{remarks}
(i) The case $p=2$ with $P=x^3 + 30\,x^2 + 165\,x + 220$, where:
\ctlm{$v_2(\order {\mathcal T}_K)=12  \ \  \& \ \  \Delta_2(K) \approx -3.98595$,}
seems exceptional, but the discriminant $D_K=66825$ is rather small. The
Galois closure $L$ of $K$ contains $\Q(\sqrt {33})$ and is defined by the polynomial:
\ctlm{$Q=x^6 - 60 x^5 +1131 x^4 - 6380 x^3 - 15708 x^2 + 145200 x + 170368$;}
then $v_2(\order {\mathcal T}_L)=25$, giving $C_2(L) \approx 1.3476$
instead of $C_2(K) \approx 1.4973$.

\smallskip
(ii) For $p=5$ and $P=x^3 + 197\,x^2 + 718\,x + 508$, 
$v_5(\order {\mathcal T}_K)=8$ is large, but
$D_K=1069350637=769 \cdot 1390573$ is rather large, giving
$C_5(K) \approx 1.2386$.

\smallskip
(iii) For $p=7$, $P=x^3 + 95\,x^2 + 839\,x + 252$,  $v_7(\order {\mathcal T}_K)=5$,
with $C_7(K) \approx 0.8856$,
but $D_K=3486121421$, while for $P=x^3 + 114\,x^2 + 804\,x + 142$,
$v_7(\order {\mathcal T}_K)=2$ with $C_7(K) \approx 1.2286$, but $D_K=564$.

\smallskip
(iv) We have computed $C_p(L)$ for the Galois closure $L$ of
the above fields $K$ (Galois group $S_3$). The values $C_p(L)$ are smaller, although the
$v_p(\order {\mathcal T}_L)$ are roughly speaking twice of $v_p(\order {\mathcal T}_K)$
(cf. Example (i)). This reinforces the idea that extensions $L/K$ may give in general
values of $C_p(L)$ smaller than those of $C_p(K)$.
\end{remarks}

\section{Conjectures on $v_p(\order {\mathcal T}_K)$}

\subsection{$p$-adic statements}
The numerical results (quadratic and cubic cases, with the particular family
of quadratic fields studied in Subsections \ref{numexp}, \ref{family}, \ref{reciprocal}) 
suggest the following conjecture that we state in its strongest form; 
we shall discuss about some conditions of application of such a conjecture, 
for instance assuming that the fields $K$ are of given degree or are elements of
specified families.

\smallskip
The points (i) and (ii) are equivalent statements:

\begin{conjecture} \label{conjprinc}
Let $K \in {\mathcal K}_{\rm real}$ (or any element of a specified family 
${\mathcal K} \subseteq {\mathcal K}_{\rm real}$), and let $p \geq 2$ be 
a prime number. Let ${\mathcal T}_K$ be the torsion group of the Galois 
group of  the maximal abelian $p$-ramified pro-$p$-extension of $K$ 
(under Leopoldt's conjecture).

\smallskip
(i) There exists a constant ${\mathcal C}_p({\mathcal K}) =: {\mathcal C}_p$,
independent of $K \in {\mathcal K}$, such that:
\begin{equation*}\label{pconjecture}
v_p(\order {\mathcal T}_K) \leq {\mathcal C}_p \cdot 
\Frac{{\rm log}_\infty(\sqrt{D_K})}{{\rm log}_\infty(p)}, \ 
\hbox{for all $K \in {\mathcal K}$}.
\end{equation*}
\quad (ii) The residue $\wt \kappa_{K,p}^{}$ of the normalized
$\zeta$-function $\wt \zeta_{K,p}(s) = \Frac{p \cdot [K \cap \,\Q^{\rm c} : \Q]}
{2^{d-1}} \,\zeta_{K, p}(s)$ at $s=1$ (see Subsection \ref{places}), is 
conjecturaly such that:

\smallskip
\centerline{$v_p(\wt \kappa_{K,p}^{})  \leq 
{\mathcal C}_p \cdot\Frac{{\rm log}_\infty(\sqrt{D_K})}{{\rm log}_\infty(p)}$,
for all $K \in {\mathcal K}$.}
\end{conjecture}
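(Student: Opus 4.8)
The plan is to reduce the conjecture to a uniform upper bound for the $p$-adic valuation of the normalized regulator ${\mathcal R}_K$, after disposing of the class-group and root-of-unity contributions by classical means, and then to isolate precisely where a genuinely new input is unavoidable. First, the equivalence of (i) and (ii) is immediate from Subsection \ref{places}: by \eqref{fini} (and \eqref{59} in the abelian case) one has $\wt\kappa_{K,p}^{}\sim\order{\mathcal T}_{K,p}$, hence $v_p(\wt\kappa_{K,p}^{})=v_p(\order{\mathcal T}_K)$ since $\sim$ means equality up to a $p$-adic unit; so it suffices to establish (i).

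Next I would use $\order{\mathcal T}_K=\order\Cl_K^{\,\rm c}\cdot\order{\mathcal R}_K\cdot\order{\mathcal W}_K$ together with the formula for $\order\Cl_K^{\,\rm c}$ recalled in Subsection \ref{abelfunct}. The factor $\order{\mathcal W}_K$ contributes $O(1)$ (indeed $0$ for $p\gg 0$). For $v_p(\order\Cl_K^{\,\rm c})\le v_p(\order\Cl_K)+O(1)$ one invokes the classical Brauer bound ${\rm log}_\infty(h_K)\le{\mathcal C}_0\,{\rm log}_\infty(\sqrt{D_K})$ of Remark \ref{note1} (with ${\mathcal C}_0=1$ for quadratic and cyclic cubic fields), or the sharper Ellenberg--Venkatesh-type estimate ${\rm log}_\infty(\order\Cl_K)\le{\rm log}_\infty(C_{\epsilon,p,d})+\epsilon\,{\rm log}_\infty(\sqrt{D_K})$ of \cite{EV}. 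All the difficulty is therefore concentrated in $v_p(\order{\mathcal R}_K)$.

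I would then attack ${\mathcal R}_K$ via \eqref{regulator}: in the unramified case $p\ne 2$ this identifies $\order{\mathcal R}_K$ (after normalizing ${\rm log}_p$ to $\frac{1}{p}{\rm log}_p$) with $p^{\delta_p(\varepsilon)}$ for a Minkowski system of units, and Proposition \ref{casramif} handles the ramified and $p=2$ cases in the quadratic setting; in the abelian case the class-number formula and cyclotomic units rephrase this as a bound on $\delta_p(\eta_K^{})$ for $\eta_K^{}=\prod_{a,\chi(a)=1}(1-\zeta_{D_K}^a)$. The target then becomes a $p$-adic analogue of the archimedean inequality $\frac{1}{2}L_{p_\infty}(1,\chi)\le{\rm log}_\infty(\sqrt{f_\chi})$, namely $\delta_p(\eta_K^{})\ll_p {\rm log}_\infty(\sqrt{D_K})/{\rm log}_\infty(p)$. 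For \emph{specified} families this can be done unconditionally: the method behind the Theorem on $\Q\big(\sqrt{a^2p^{2\rho}+1}\big)$ computes an explicit fundamental unit, reads off $\delta_p(\varepsilon_K)=\rho-1$, and compares with ${\rm log}_\infty(\sqrt{D_K})/{\rm log}_\infty(p)\approx\rho$; the Lemma of Subsection \ref{reciprocal} systematically produces units of the shape $1+p^\rho(\cdots)$ for which the same reasoning applies. For a family of fixed degree $d$ one would instead combine an Ellenberg--Venkatesh input for $\Cl_K$ with an Amice--Fresnel/Coates-type estimate \cite{AF,Coa} relating $R_{K,p}$ to $\order{\mathcal T}_{K,p}$.

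The hard step — and the reason the statement is a conjecture rather than a theorem — is exactly the transfer from the archimedean bound on $L_{p_\infty}(1,\chi)$ to a $p$-adic bound on $L_p(1,\chi)$, i.e.\ a uniform upper bound for $\delta_p$ of a Minkowski unit. As Washington's remarks quoted above and the following Remark make explicit, a large $v_p(\order{\mathcal T}_K)$ is precisely the presence of a ``Siegel zero'' of $\zeta_{K,p}$ (or of $L_p(s,\chi_K^{})$) near $s=1$, and there is no $p$-adic analogue of the ``at most one Siegel zero'' fact that drives the classical proof; the heuristic of \cite[Th\'eor\`eme 1.1]{Gr2} yields only the probabilistic behaviour ${\rm Prob}\big(\delta_p(\eta_K^{})\ge 1\big)=O(1)\cdot p^{-1}$ as $p\to\infty$, not a bound valid for fixed $p$ and all $D_K$. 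Accordingly I would not expect a proof of the full conjecture by this route; the realistic deliverables are unconditional results in explicit families as above, together with a clean reduction of the general case to a uniform Fermat-quotient statement on Minkowski units — which is itself a refinement of Leopoldt's conjecture and no easier than the classical Fermat-quotient problems.
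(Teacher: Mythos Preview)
Your analysis is correct and matches the paper's own treatment. The statement is a \emph{conjecture}, so the paper offers no proof; it only remarks (just before the statement) that (i) and (ii) are equivalent --- which you derive correctly from $\wt\kappa_{K,p}^{}\sim\order{\mathcal T}_K$ --- and then supports the conjecture with exactly the heuristic decomposition $\order{\mathcal T}_K=\order\Cl_K^{\,\rm c}\cdot\order{\mathcal R}_K\cdot\order{\mathcal W}_K$, the class-group estimates, the explicit quadratic families, and the identification of the Fermat-quotient/Siegel-zero obstruction that you outline. Your final paragraph, reducing the genuine difficulty to a uniform bound on $\delta_p$ of a Minkowski unit and noting that this is a refinement of Leopoldt unavailable by current methods, is precisely the paper's own conclusion (see items ({\bf a}), ({\bf b}), ({\bf f}) of the Conclusions).
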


We may propose the following conjecture which takes into account
the numerical behaviour of the $C_p(K)$ that we have observed;
but unfortunately, this would need inaccessible computations to 
be more convincing:

\begin{conjecture}\label{conjprinc2}
Let ${\mathcal K}_{\rm real}$ be the set of all totally real number 
fields and let $p \geq 2$ be any fixed prime number. Then
$\ds \limsup_{K \in {\mathcal K}_{\rm real}, D_K \to \infty} 
\Big(\Frac {v_p(\order {\mathcal T}_K) \cdot {\rm log}_\infty(p)}
{{\rm log}_\infty(\sqrt{D_K})} \Big) = 1$.
\end{conjecture}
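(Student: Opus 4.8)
The plan is to establish the two inequalities $\ds\limsup_{K}C_p(K)\geq 1$ and $\ds\limsup_{K}C_p(K)\leq 1$ separately; they are of vastly unequal difficulty, and the reverse inequality is where the genuine content (and the obstruction) lies.

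\emph{Lower bound.} For $\limsup\geq 1$ I would exhibit one explicit sequence realizing the value $1$ in the limit, namely the quadratic fields $K_\rho=\Q(\sqrt{p^{2\rho}+1})$ of Subsection \ref{family} with $a=1$. Assuming, as in the partial result of that subsection, that $p^{2\rho}+1$ is squarefree for infinitely many $\rho$ (plausible, since $p^{2\rho}+1$ is of the shape $n^2+1$, though the sparse subsequence $n=p^\rho$ is not covered by \cite{H-B}; if this fails one falls back on the non-squarefree variant of Remark \ref{nonqf} or on the units built in Subsection \ref{reciprocal}), the computation there gives $\delta_p(\varepsilon_{K_\rho})+v_p(\order{\mathcal W}_{K_\rho})=\rho-1$, hence
\[
v_p(\order{\mathcal T}_{K_\rho})=v_p(\order\Cl_{K_\rho}^{\,\rm c})+\delta_p(\varepsilon_{K_\rho})+v_p(\order{\mathcal W}_{K_\rho})\geq\rho-1,
\]
while ${\rm log}_\infty(\sqrt{D_{K_\rho}})=\rho\,{\rm log}_\infty(p)+O(1)$. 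Therefore $C_p(K_\rho)\geq\frac{(\rho-1)\,{\rm log}_\infty(p)}{\rho\,{\rm log}_\infty(p)+O(1)}\to 1$, so $\ds\limsup_{K}C_p(K)\geq 1$. (By the same subsection $C_p(K_\rho)<2$, so this family does not by itself push the value above~$1$.)

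\emph{Upper bound: the reduction.} For $\limsup\leq 1$ one must show that for every $\epsilon>0$ all but finitely many $K\in{\mathcal K}_{\rm real}$ satisfy ${\rm log}_\infty(\order{\mathcal T}_K)\leq(1+\epsilon)\,{\rm log}_\infty(\sqrt{D_K})$. Write $\order{\mathcal T}_K=\order\Cl_K^{\,\rm c}\cdot\order{\mathcal R}_K\cdot\order{\mathcal W}_K$ and bound the three factors in turn. The factor $\order{\mathcal W}_K$ equals $1$ for $p$ unramified and large and is in general controlled by the degree, hence harmless on families of bounded degree (the setting where the conjecture is most secure). The factor $\order\Cl_K^{\,\rm c}$ divides $\order\Cl_K$ up to the bounded index $[H_K\cap K^{\rm c}:K]$, and conditionally on the Ellenberg--Venkatesh bound $\order\Cl_K\ll_{\epsilon,p,d}D_K^{\,\epsilon}$ of \cite{EV} (or the Cohen--Lenstra heuristics \cite{CL,CM}) one gets ${\rm log}_\infty(\order\Cl_K^{\,\rm c})=o({\rm log}_\infty(\sqrt{D_K}))$ — recall that the unconditional bounds $h_K\leq\frac12\sqrt{D_K}$, $h_K\leq\frac23\sqrt{D_K}$ of \eqref{lou} only give the constant~$1$. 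Finally, by \eqref{regulator} the index $(\Z_p:{\rm log}_p({\rm N}_{K/\Q}(U_K)))$ and $\prod_{{\mathfrak p}\mid p}{\rm N}{\mathfrak p}$ are powers of $p$ bounded in terms of $d$ and the ramification of $p$, so $v_p(\order{\mathcal R}_K)=v_p(R_{K,p})+O_{p,d}(1)$, and the whole problem collapses to the single inequality
\[
v_p(R_{K,p})\ \leq\ (1+o(1))\cdot\Frac{{\rm log}_\infty(\sqrt{D_K})}{{\rm log}_\infty(p)},\qquad D_K\to\infty.
\]

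\emph{The main obstacle.} This last inequality is the genuine difficulty, and I do not expect it to be provable with present techniques. By Remarks \ref{defdelta} it asserts that the $p$-adic valuation of the $p$-adic regulator $R_{K,p}$ --- equivalently, the invariant $\delta_p(\varepsilon)$ attached to a Minkowski unit, or, via the cyclotomic-unit form of the $p$-adic class number formula, the invariant $\delta_p(\eta_K)$ for $\eta_K=\prod_{\chi(a)=1}(1-\zeta_{D_K}^a)$ --- grows no faster than ${\rm log}_\infty(\sqrt{D_K})/{\rm log}_\infty(p)$. In the language of \cite{Wa2} this is precisely the statement that $\zeta_{K,p}(s)$ has no $p$-adic ``Siegel zero'' too close to~$1$; $p$-adic $L$-function methods fail to produce it, since there is no available bound on the number of zeros near~$1$, unlike the archimedean case where one has \eqref{major}. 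The Fermat-quotient heuristics of \cite{Gr2} --- giving probability roughly $p^{-\delta}$ for $\delta_p\geq\delta$ --- make the bound believable, and it is essentially Conjecture \ref{conjprinc} with ${\mathcal C}_p$ asymptotically~$1$; but a proof seems no more accessible than deciding whether $\frac1p(2^{p-1}-1)\equiv 0\pmod p$ for infinitely many~$p$, and indeed the assertion refines Leopoldt's conjecture. So what this plan actually delivers is an honest reduction of Conjecture \ref{conjprinc2} to the displayed Fermat-quotient-type bound, not a proof --- which is why the statement is offered as a conjecture.
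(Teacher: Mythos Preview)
The statement is a \emph{conjecture}; the paper offers no proof, only numerical evidence and the heuristic discussion surrounding Subsections \ref{family}, \ref{reciprocal} and \S\,\ref{vph}. Your proposal correctly recognises this in its final sentence, and what you have written is not a proof but an analysis of the obstruction --- which is exactly what the paper itself provides.

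On the lower bound, your use of the family $K_\rho=\Q(\sqrt{p^{2\rho}+1})$ matches the paper's own argument in Subsection \ref{family}: conditionally on infinitely many squarefree $p^{2\rho}+1$, one gets $C_p(K_\rho)\to 1^-$, hence $\limsup\geq 1$. On the upper bound, your reduction via $\order{\mathcal T}_K=\order\Cl_K^{\,\rm c}\cdot\order{\mathcal R}_K\cdot\order{\mathcal W}_K$, the Ellenberg--Venkatesh input for the class-group factor, and the identification of $v_p(R_{K,p})$ as the genuine obstacle is precisely the decomposition the paper uses throughout (see \S\,\ref{vph}\,(ii), formula \eqref{regulator}, and Remarks \ref{defdelta}). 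Your conclusion that the residual inequality is a Fermat-quotient problem beyond current reach echoes the paper's own Remarks \ref{defdelta}\,(iii) and the Conclusions, \S\,({\bf f}).

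In short: there is nothing to compare against, and your heuristic reduction coincides with the paper's. The only caveat is that even your lower bound is conditional (on the squarefree hypothesis, or on the workaround of Remark \ref{nonqf}), so strictly speaking neither half of the conjecture is established unconditionally.
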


\begin{theorem} Let $d$ be a fixed positive integer and let $p \nmid d$.
Let ${\mathcal K}_{\rm ab}^{(d)}$ be the set of real abelian extension of $\Q$
whose degree divides $d$. Then the conjecture \ref{conjprinc} is true for 
${\mathcal K}_{\rm ab}^{(d)}$ if and only if it is true for the subset of 
cyclic extensions of ${\mathcal K}_{\rm ab}^{(d)}$.
\end{theorem}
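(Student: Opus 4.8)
The plan for this equivalence is as follows. The ``only if'' implication is immediate, since the cyclic real extensions of degree dividing $d$ form a subfamily of ${\mathcal K}_{\rm ab}^{(d)}$, so any constant valid on ${\mathcal K}_{\rm ab}^{(d)}$ is \emph{a fortiori} valid on the cyclic subfamily. The content is the converse, and the idea is to decompose a general $K\in{\mathcal K}_{\rm ab}^{(d)}$ into the cyclic subfields cut out by its characters, exploiting that $p\nmid d$ forces the group algebra $\Z_p[{\rm Gal}(K/\Q)]$ to be a finite product of unramified local rings (in particular $K\cap\Q^{\rm c}=\Q$).

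Fix $K\in{\mathcal K}_{\rm ab}^{(d)}$ and $G={\rm Gal}(K/\Q)$, so $p\nmid\order G$. First I would invoke the isotypic decomposition of the finite $\Z_p[G]$-module ${\mathcal T}_K$: writing $e_\psi$ for the primitive idempotents attached to the ${\rm Gal}(\overline{\Q_p}/\Q_p)$-orbits $\psi$ of absolutely irreducible characters of $G$, one has ${\mathcal T}_K=\bigoplus_{\psi}e_\psi{\mathcal T}_K$ and hence $v_p(\order{\mathcal T}_K)=\sum_{\psi}v_p(\order e_\psi{\mathcal T}_K)$, a sum of non-negative integers with at most $\order G-1\le d-1$ nonzero terms (the $\psi=1$ term corresponding to ${\mathcal T}_\Q=1$). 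For $\psi\ne 1$ set $K_\psi:=K^{\ker\psi}$; the characters of the orbit $\psi$ share a common kernel and have cyclic image, so $K_\psi$ is a cyclic totally real field of degree dividing $d$, and the conductor--discriminant formula over $\Q$ gives $D_{K_\psi}=\prod_{\chi|_{\ker\psi}=1}f_\chi$, a divisor of $\prod_{\chi\in\widehat G}f_\chi=D_K$; in particular $D_{K_\psi}\le D_K$.

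The crux is the identity $v_p(\order e_\psi{\mathcal T}_K)=v_p(\order e_\psi{\mathcal T}_{K_\psi})$, i.e.\ the descent of $p$-ramification on the $\psi$-part along $K/K_\psi$; it holds because $p\nmid[K:K_\psi]$, each of the modules $U_\bullet,\overline E_\bullet,\Cl_\bullet,{\mathcal W}_\bullet$ of the diagram of Section \ref{def} having its $\psi$-component already defined over $K_\psi$ (cf.\ \cite{Gr1}). I expect the most economical rigorous route to pass instead through the abelian $p$-adic class-number formula \eqref{59}: for any abelian real $L$ admitting $\psi$ one has $v_p(\order e_\psi{\mathcal T}_L)=\sum_{\chi\in\psi}v_p\bigl(\tfrac12 L_p(1,\chi)\bigr)$, and the right-hand side, which is non-negative as the order of a genuine finite group (even though the single-character terms need not be integral), depends only on $K_\psi$ since every $\chi\in\psi$ factors through ${\rm Gal}(K_\psi/\Q)$. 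Hence $v_p(\order e_\psi{\mathcal T}_K)\le v_p(\order{\mathcal T}_{K_\psi})$.

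Granting Conjecture \ref{conjprinc} for the cyclic subfamily with constant ${\mathcal C}_p^{\rm cyc}$, the pieces combine to
\[
v_p(\order{\mathcal T}_K)=\sum_{\psi\ne 1}v_p(\order e_\psi{\mathcal T}_K)\le\sum_{\psi\ne 1}v_p(\order{\mathcal T}_{K_\psi})\le(d-1)\,{\mathcal C}_p^{\rm cyc}\max_{\psi\ne 1}\frac{{\rm log}_\infty(\sqrt{D_{K_\psi}})}{{\rm log}_\infty(p)}\le(d-1)\,{\mathcal C}_p^{\rm cyc}\,\frac{{\rm log}_\infty(\sqrt{D_K})}{{\rm log}_\infty(p)},
\]
so that ${\mathcal C}_p({\mathcal K}_{\rm ab}^{(d)}):=(d-1)\,{\mathcal C}_p^{\rm cyc}$ does the job, and the equivalence follows with an explicit relation between the two constants. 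The only genuinely delicate point is the $\psi$-descent of the previous paragraph; everything else is formal once $p\nmid d$ secures the semisimplicity of $\Z_p[G]$.
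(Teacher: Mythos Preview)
Your proposal is correct and follows essentially the same line as the paper's proof: semisimple decomposition of ${\mathcal T}_K$ by characters (you use $\Q_p$-rational orbits, the paper uses $\Q$-rational ones, but the associated cyclic subfields $K_\psi=k_\chi$ coincide), the inequality $v_p(\order e_\psi{\mathcal T}_K)\le v_p(\order{\mathcal T}_{K_\psi})$, and a discriminant comparison to bound each cyclic piece by ${\mathcal C}_p^{\rm cyc}\cdot\frac{{\rm log}_\infty(\sqrt{D_K})}{{\rm log}_\infty(p)}$. The only cosmetic difference is that the paper bounds $D_{K_\psi}$ against $D_K$ via the relative discriminant formula $D_K=D_{k_\chi}^{[K:k_\chi]}\cdot{\rm N}_{k_\chi/\Q}(D_{K/k_\chi})$ (getting the sharper ${\rm log}_\infty(\sqrt{D_K})\ge[K:k_\chi]\,{\rm log}_\infty(\sqrt{D_{k_\chi}})$), whereas you use the conductor--discriminant formula to obtain $D_{K_\psi}\mid D_K$; both suffice, and both yield a final constant of the shape $(\hbox{number of nontrivial cyclic subfields})\cdot{\mathcal C}_p^{\rm cyc}$.
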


\begin{proof} 
Let $K \in {\mathcal K}_{\rm ab}^{(d)}$. As $p \nmid [K : \Q]$, 
${\mathcal T}_K \simeq \bigoplus_\chi {\mathcal T}_K^{e_\chi}$,
where $\chi$ runs trough the set of irreducible rational characters of 
${\rm Gal}(K/\Q)$ (a set which is in bijection with that of cyclic subfields of $K$), 
$e_\chi$ being the corresponding idempotent; then ${\mathcal T}_K^{e_\chi}$
is isomorphic to a submodule of ${\mathcal T}_{k_\chi}$,
where $k_\chi$ (cyclic) is the subfield of $K$ fixed by the kernel of $\chi$,
and $v_p(\order {\mathcal T}_K) = \sum_\chi v_p(\order {\mathcal T}_K^{e_\chi})$.
We have: 
$$C_p(K) = \Frac{v_p(\order {\mathcal T}_K) \cdot {\rm log}_\infty(p)}
{{\rm log}_\infty(\sqrt{D_K})} = \ds \sum_\chi 
\Frac{v_p(\order {\mathcal T}_K^{e_\chi}) \cdot {\rm log}_\infty(p)}
{{\rm log}_\infty(\sqrt{D_K})} \leq \ds \sum_\chi 
\Frac{v_p(\order {\mathcal T}_{k_\chi}) \cdot {\rm log}_\infty(p)}
{{\rm log}_\infty(\sqrt{D_K})}; $$
but $D_K=D_{k_\chi}^{[K : k_\chi]} \cdot {\rm N}_{k_\chi/\Q} (D_{K/k_\chi})$
yields ${\rm log}_\infty(\sqrt{D_K}) \geq[K : k_\chi] \cdot{\rm log}_\infty \big (\sqrt{D_{k_\chi}} \big )$
for all $\chi$. Thus, if we have the inequalities 
$C_p(k_\chi) =\Frac{v_p(\order {\mathcal T}_{k_\chi}) \cdot {\rm log}_\infty(p)}
{{\rm log}_\infty \big (\sqrt{D_{k_\chi}} \big )} \leq {\mathcal C}_p$ for all~$\chi$, the theorem 
follows with a constant $ {\mathcal C}'_p$, depending on the maximal number of cyclic 
subfields for elements of the set ${\mathcal K}_{\rm ab}^{(d)}$, which may be explicited.
\end{proof}

Let's illustrate this by means of random real biquadratic fields $K$ for which 
we compute the invariants of $K$ and its subfields (then $vptor = v1+v2+v3$ for $p\ne 2$):

\footnotesize
\begin{verbatim}
{p=3;n=18;N=2*10^2;B=10^6;vmax=0;for(j=1,B,m1=random(N)+1;m2=random(N)+1;
P1=x^2-m1;P2=x^2-m2;P3=x^2-m1*m2;P=component(polcompositum(P1,P2),1);
if(poldegree(P)!=4,next);D1=nfdisc(P1);D2=nfdisc(P2);D3=nfdisc(P3);D=nfdisc(P);
K1=bnfinit(P1,1);Kpn=bnrinit(K1,p^n);C5=component(Kpn,5);Hpn0=component(C5,1); 
Hpn=component(C5,2);Hpn1=component(Hpn,1);v1=valuation(Hpn0/Hpn1,p); 
K2=bnfinit(P2,1);Kpn=bnrinit(K2,p^n);C5=component(Kpn,5);Hpn0=component(C5,1); 
Hpn=component(C5,2);Hpn1=component(Hpn,1);v2=valuation(Hpn0/Hpn1,p); 
K3=bnfinit(P3,1);Kpn=bnrinit(K3,p^n);C5=component(Kpn,5);Hpn0=component(C5,1); 
Hpn=component(C5,2);Hpn1=component(Hpn,1);v3=valuation(Hpn0/Hpn1,p); 
K=bnfinit(P,1);Kpn=bnrinit(K,p^n);C5=component(Kpn,5);Hpn0=component(C5,1); 
Hpn=component(C5,2);Hpn1=component(Hpn,1);vptor=valuation(Hpn0/Hpn1,p); 
Cp1=v1*log(p)/log(sqrt(D1));Cp2=v2*log(p)/log(sqrt(D2));
Cp3=v3*log(p)/log(sqrt(D3));Cp=vptor*log(p)/log(sqrt(D));
if(vptor>vmax,vmax=vptor;print(D1," ",D2," ",D3," ",D," ",
v1," ",v2," ",v3," ",vptor," ",Cp1," ",Cp2," ",Cp3," ",Cp)))}

D1    D2    D3      D              v1 v2 v3  vptor  Cp1     Cp2     Cp3      Cp
41    840   34440   1186113600     0  0  2   2      0       0       0.4206   0.2103 
12    1896  632     14379264       0  7  0   7      0       2.0378  0        0.9332
1896  1096  32469   67471101504    7  0  1   8      2.0378  0       0.2115   0.7049
1896  13    24648   607523904      7  0  2   9      2.0378  0       0.4345   0.9777
1976  1896  234156  877264517376   2  7  1   10     0.5790  2.0378  0.1777   0.7989
1896  824   97644   152549611776   7  4  0   11     2.0378  1.3090  0        0.9385
1896  488   14457   13376310336    7  4  1   12     2.0378  1.4197  0.2293   1.1308 
449   1896  851304  724718500416   1  7  5   13     0.3597  2.0378  0.8045   1.0459
\end{verbatim}
\normalsize

\noindent
For two random discriminants of quadratic fields, taken up to $2\cdot 10^2$, 
the program did not find any $v_3(\order {\mathcal T}_K) > 13$. We have 
$C_p(K)<\max(C_p(K_1),C_p(K_2),C_p(K_3))$ (obvious for the biquadratic case).
It is likely that the compositum $K$ of two fields $K_1$, $K_2$, gives in general 
{\it smaller} $C_p(K)$, except if $v_p(\order {\mathcal T}_{K_1})$ and 
$v_p(\order {\mathcal T}_{K_2})$ are small regarding $v_p(\order {\mathcal T}_{K})$ 
and if the number of subfields of $K$ is important, but in that case $C_p(K)$
remains very small, as is shown by the following rare examples obtained
as compositum of two random non-Galois cubic fields giving large 
$v_p(\order {\mathcal T}_{K})$ (the last line gives $v1$, $v2$,
$vptor$, $Cp1$, $Cp2$, $Cp$):

\footnotesize
\begin{verbatim}
     p=2
P1=x^3-45*x^2+24*x-1,P2=x^3-36*x^2+27*x-1,P=x^9+27*x^8-2844*x^7-54486*x^6
   +2141829*x^5+20969253*x^4-10466577*x^3-5546475*x^2+1542807*x+10233
766017   77433  23187342173591131003005670474209    
1      1      9       0.102317   0.123147      0.172756
P1=x^3-12*x^2+9*x-1,P2=x^3-20*x^2+23*x-1,P=x^9-24*x^8-192*x^7+5728*x^6
   +10131*x^5-301710*x^4+238483*x^3+148968*x^2-83460*x- 8520
3753   15465  21724158202972986227625    
1      1      10      0.168437   0.143712      0.269535
P1=x^3-23*x^2+22*x-1,P2=x^3-19*x^2+42*x-1,P=x^9+12*x^8-634*x^7-4844*x^6
   +112245*x^5+317540*x^4-1892181*x^3+376428*x^2+2193504*x+51904
173857  1937  38191384824694383099923729    
1      1      8       0.114892   0.183156      0.188276
P1=x^3-27*x^2+35*x-1,P2=x^3-11*x^2+8*x-1,P=x^9+48*x^8+303*x^7-10953*x^6
   -72549*x^5+825678*x^4+1083824*x^3-357201*x^2-414609*x+57421
10309   1929  7864050646576255644981    
2      1      11      0.300038   0.183256      0.302464
P1=x^3-18*x^2+31*x-1,P2=x^3-30*x^2+43*x-1,P=x^9-36*x^8-426*x^7+18708*x^6
   +66213*x^5-2207940*x^4-1980725*x^3+5522748*x^2+2482560*x+22464
178889   1261265  11486029882117782845780928107151625    
2      3      17      0.229243   0.296055      0.300498
     p=3
P1=x^3-47*x^2+27*x-1,P2=x^3-14*x^2+26*x-1,P=x^9+99*x^8+2110*x^7-39581*x^6
   -841754*x^5+12433359*x^4-31915251*x^3+12891832*x^2+16161948*x+8084
284788   57741  4446496553844548173991089269312    
1      1      7       0.174945   0.200408      0.217948
P1=x^3-31*x^2+25*x-1,P2=x^3-24*x^2+38*x-1,P=x^9+21*x^8-1152*x^7-17265*x^6
   +370464*x^5+2658657*x^4-5851191*x^3-1210464*x^2+3554288*x+55138
432884   573349  15288742990049019447046087884332096    
1      1      10      0.169300   0.165712      0.279145
P1=x^3-22*x^2+41*x-1,P2=x^3-9*x^2+18*x-1,P=x^9+39*x^8+288*x^7-3470*x^6
   -23571*x^5+176589*x^4-88881*x^3-684987*x^2+578139*x-18043
511537  321  4427374441992552457143633    
2      1      14      0.334301   0.380706      0.542048
P1=x^3-23*x^2+35*x-1,P2=x^3-24*x^2+30*x-1,P=x^9-3*x^8-906*x^7+1667*x^6
   +206130*x^5-144453*x^4-552539*x^3+378690*x^2+168384*x-876
110580   368037  7489652934283408190167772904000    
1      1      7       0.189195   0.171444      0.216350
P1=x^3-23*x^2+17*x-1,P2=x^3-36*x^2+27*x-1,P=x^9-39*x^8-1017*x^7+37436*x^6
   +322812*x^5-7556721*x^4-95099*x^3+3294255*x^2-9906*x-2367
91572   77433  39611733265845206525895660864    
1      1      8       0.192319   0.195184      0.266941
\end{verbatim}
\normalsize

\begin{theorem} Let $K$ be a totally real number field and let
${\mathcal K}^{\rm c}$ be the set of subfields $K_n$ of the $p$-cyclotomic 
tower $K^{\rm c}$ of $K$ (with $[K_n : K]=p^n$, for all $n \geq 0$). Then, 
under the Leopoldt conjecture in $K^{\rm c}$, $C_p(K_n) \too 0$ as $n \to\infty$.
\end{theorem}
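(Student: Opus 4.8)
\emph{The plan.} Write $\Gamma:={\rm Gal}(K^{\rm c}/K)\simeq\Z_p$, $\Gamma_n:=\Gamma^{p^n}$, $\Lambda:=\Z_p[[\Gamma]]$. Since $K^{\rm c}$ is also the cyclotomic $\Z_p$-extension of each $K_n$, the hypothesis is equivalent to: Leopoldt holds in every $K_n$; then each ${\mathcal T}_{K_n}={\rm Gal}(H_{K_n}^{\rm pr}/K^{\rm c})$ is finite and $C_p(K_n)={\rm log}_\infty(\order {\mathcal T}_{K_n})/{\rm log}_\infty(\sqrt{D_{K_n}})$. The plan is to prove that the numerator is $O(p^n)$ while the denominator is $\gg n\,p^n$, so that $C_p(K_n)=O(1/n)\to 0$.

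\emph{Numerator, via Iwasawa theory.} I would form $T_\infty:=\varprojlim_n {\mathcal T}_{K_n}$ along the arithmetic norm maps, a $\Lambda$-module. Because every ${\mathcal T}_{K_n}$ is finite (this is exactly Leopoldt in $K^{\rm c}$), $T_\infty$ can carry no free $\Lambda$-part, hence is a finitely generated \emph{torsion} $\Lambda$-module; by the structure theorem and Iwasawa's growth formula, $v_p\big(\order (T_\infty)_{\Gamma_n}\big)=\mu\,p^n+\lambda\,n+\nu$ for $n\gg 0$, with $\mu,\lambda,\nu$ depending only on $K$ and $p$. Since $K_n/K$ is ramified at $p$, the norm maps ${\mathcal T}_{K_{n+1}}\to{\mathcal T}_{K_n}$ are surjective for $n\gg 0$, so ${\mathcal T}_{K_n}$ is, up to bounded cokernel, a quotient of $(T_\infty)_{\Gamma_n}$; therefore $v_p(\order {\mathcal T}_{K_n})\le \mu\,p^n+\lambda\,n+\nu+O(1)=O(p^n)$. (As a cross-check one may bound the three factors of $\order {\mathcal T}_{K_n}=\order\Cl_{K_n}^{\,\rm c}\cdot\order{\mathcal R}_{K_n}\cdot\order{\mathcal W}_{K_n}$ separately: $\order{\mathcal W}_{K_n}=O(1)$, $v_p(\order\Cl_{K_n}^{\,\rm c})\le v_p(\order\Cl_{K_n})=O(p^n)$ by Iwasawa's class-number formula, and $v_p(\order{\mathcal R}_{K_n})=O(p^n)$ via the torsion Iwasawa module attached to $U_{K_n}/\ov E_{K_n}$; cf. \cite{Gr1, Gr4}.)

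\emph{Denominator, via ramification.} The extension $K_n/K$ is unramified outside $p$, and for $n\gg 0$ it is totally ramified at every prime above $p$; fixing such a prime $\mathfrak Q_n$ of $K_n$, the completion $(K_n)_{\mathfrak Q_n}$ contains, up to a bounded shift of level, the local cyclotomic layer of degree $p^n$ over $\Q_p$, whose different over $\Q_p$ has valuation of order $n\,p^n$. Hence $v_p(D_{K_n})\ge f(\mathfrak Q_n/\Q_p)\cdot d(\mathfrak Q_n/\Q_p)\ge c_1\,n\,p^n$ for some $c_1=c_1(K,p)>0$, and therefore ${\rm log}_\infty(\sqrt{D_{K_n}})\ge \tfrac12\,v_p(D_{K_n})\,{\rm log}_\infty(p)\ge c_2\,n\,p^n$ with $c_2>0$.

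\emph{Conclusion and the hard part.} Combining the two estimates,
\[
C_p(K_n)=\Frac{v_p(\order {\mathcal T}_{K_n})\cdot{\rm log}_\infty(p)}{{\rm log}_\infty(\sqrt{D_{K_n}})}\le\Frac{(\mu\,p^n+\lambda\,n+\nu)\,{\rm log}_\infty(p)}{c_2\,n\,p^n}\too 0.
\]
The delicate ingredient is the numerator bound. The elementary Minkowski/Brauer estimate only gives $v_p(\order\Cl_{K_n})\ll{\rm log}_\infty(\sqrt{D_{K_n}})\asymp n\,p^n$, which is of the \emph{same} order as the denominator and would yield merely $\limsup_n C_p(K_n)\le{\rm const}$; the improvement actually needed --- that $v_p(\order {\mathcal T}_{K_n})$ carries no $n\,p^n$ term, only the much smaller Iwasawa contribution $\mu\,p^n+\lambda\,n$ --- rests on the finiteness of the Iwasawa module $T_\infty$ together with the eventual surjectivity of the norm maps (a control theorem), and it is precisely here that the Leopoldt hypothesis in $K^{\rm c}$ enters.
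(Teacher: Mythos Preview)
Your proof is correct and follows essentially the same route as the paper: the paper quotes Washington \cite[\S\,3, Proposition 2]{Wa2} for the discriminant bound $\sqrt{D_{K_n}}\ge p^{\alpha\,n\,p^n+O(p^n)}$ and invokes ``Iwasawa's theory'' to state directly that $\order {\mathcal T}_{K_n}=p^{\lambda n+\mu p^n+\nu}$ for $n\gg 0$, then divides and takes the limit. You unwind both ingredients more explicitly (local different for the denominator, and the torsion $\Lambda$-module $T_\infty$ with a control argument for the numerator), but the structure --- Iwasawa growth $O(p^n)$ in the numerator against discriminant growth $\asymp n\,p^n$ in the denominator --- is identical.
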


\begin{proof} From \cite[\S\,3, Proposition 2]{Wa2}, we get $\sqrt{D_{K_n}} \geq
p_{}^{\alpha\cdot n \cdot p^n+O(p^n)}$ with $\alpha >0$; then from Iwasawa's theory, there
exist $\lambda, \mu \in \N$ and $\nu \in \Z$ such that $\order {\mathcal T}_{K_n}
=p_{}^{\lambda\,n + \mu\,p^n + \nu}$ for $n\gg 0$. 
So we obtain
$C_p(K_n) \leq \Frac{\lambda\,n + \mu\,p^n + \nu}{\alpha\cdot n \cdot p^n+O(p^n)}$
for $n \gg 0$, where the limit of the upper bound is $0$; whence the result giving 
an example of family (${\mathcal K}^{\rm c}$) for which the 
Conjecture \ref{conjprinc} is verified.
\end{proof}

Note that if $K \in {\mathcal K}_{\rm real}$ is $p$-rational (i.e., $C_p(K)=0$), 
then $C_p(K_n)=0$ for all $n \geq 0$: see \cite{Gr1}, Proposition IV.3.4.6
from the formula of invariants (Theorem 3.3) giving $C_p(L)=0$
for any $p$-primitively ramified $p$-extension $L$ of $K$ (Definition 3.4).

\begin{remark}
In \cite{HM}, Hajir and Maire define, in the spirit of an algebraic $p$-adic
Brauer--Siegel theorem, the {\it logarithmic mean exponent} of a finite 
$p$-group $A \simeq \prd_{i=1}^r \Z/p^{a_i}\Z$, by the formula
$\M_p(A) := \Frac{1}{r} \cdot \Frac{{\rm log}_\infty (\order A)}{{\rm log}_\infty(p)}
= \Frac{1}{r} \sm_{i=1}^r a_i = \Frac{1}{r} \cdot v_p(\order A)$,
and applied to tame generalized class groups.
In the case of ${\mathcal T}_K$, we get $v_p(\order {\mathcal T}_K) =
{\rm rk}_p({\mathcal T}_K) \cdot \M_p({\mathcal T}_K)$, and 
we would have conjecturally, for any $K \in {\mathcal K}_{\rm real}$:
\ctlm{$\M_p({\mathcal T}_K) \leq {\mathcal C}_p \cdot \Frac{1}{{\rm rk}_p({\mathcal T}_K)} \cdot
\Frac{{\rm log}_\infty(\sqrt{D_K})}{{\rm log}_\infty(p)} \leq {\mathcal C}_p \cdot 
\Frac{{\rm log}_\infty(\sqrt{D_K})}{{\rm log}_\infty(p)}$.}
But in \cite[Theorems 0.1, 1.1, Proposition 2.2]{HM}, this function $ \M_p$ is 
essentially used for class groups in particular infinite towers with tame 
restricted ramification for which some explicit upper bounds are obtained.
\end{remark}

In this context, we can suggest the following direction of search:

\begin{proposition} Let $K$ be a totally real number field and let $L$ be
the (totally  real) $p$-Hilbert tower of $K$; we assume that $L/K$ is infinite. 
Let ${\mathcal K}$ be a set of subfields $K_n$ of $L$, with $K_n \subset K_{n+1}$
and $[K_n : K]=p^n$ for all $n \geq 0$. 

\noindent
Then $C_p(K_n) = \Frac{v_p(\order {\mathcal T}_{K_n}) \cdot {\rm log}_\infty(p)}
{p^n \cdot {\rm log}_\infty(\sqrt{D_{K}})}$, and Conjecture \ref{conjprinc} 
is true for ${\mathcal K}$
as soon as $v_p(\order {\mathcal T}_{K_n})$ is ``essentially'' a linear function of 
the degree $[K_n : K]=p^n$ as $n \to\infty$ (i.e., $v_p(\order {\mathcal T}_{K_n}) =
\alpha \, n + \beta\,p^n + \gamma$ for all $n \gg 0$,
$\alpha, \beta \in \N$, $\gamma \in \Z$).
\end{proposition}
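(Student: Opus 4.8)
The plan is to treat the two assertions of the proposition separately: the explicit expression for $C_p(K_n)$ is unconditional and comes purely from the unramifiedness of the Hilbert tower, while the stated bound is a formal consequence of the linearity hypothesis.

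First I would record that, since $L/K$ is infinite, $K \neq \Q$, so $D_K > 1$ and every denominator below is positive. The field $L$ being a $p$-Hilbert class field tower over $K$, the extension $L/K$ — hence each $K_n/K$ — is unramified at every finite place, so the relative discriminant ideal $\mathfrak{d}_{K_n/K}$ is the unit ideal; the tower formula for discriminants then gives
$$D_{K_n} = {\rm N}_{K/\Q}(\mathfrak{d}_{K_n/K}) \cdot D_K^{\,[K_n:K]} = D_K^{\,p^n}$$
(both discriminants being positive, as $K_n$ and $K$ are totally real). Hence ${\rm log}_\infty(\sqrt{D_{K_n}}) = \frac{1}{2}\,{\rm log}_\infty(D_K^{\,p^n}) = p^n\,{\rm log}_\infty(\sqrt{D_K})$, and inserting this into $C_p(K_n) = v_p(\order {\mathcal T}_{K_n})\cdot{\rm log}_\infty(p)/{\rm log}_\infty(\sqrt{D_{K_n}})$ yields at once the displayed identity
$$C_p(K_n) = \Frac{v_p(\order {\mathcal T}_{K_n}) \cdot {\rm log}_\infty(p)}{p^n \cdot {\rm log}_\infty(\sqrt{D_K})}.$$

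Next I would feed in the hypothesis $v_p(\order {\mathcal T}_{K_n}) = \alpha\,n + \beta\,p^n + \gamma$ ($\alpha,\beta \in \N$, $\gamma \in \Z$), valid for all $n \geq n_0$. Substituting, for $n \geq n_0$,
$$C_p(K_n) = \Frac{(\alpha\,n + \beta\,p^n + \gamma)\cdot{\rm log}_\infty(p)}{p^n\cdot{\rm log}_\infty(\sqrt{D_K})},$$
and since $\alpha\,n/p^n \to 0$ and $\gamma/p^n \to 0$, the right-hand side converges, as $n\to\infty$, to $\beta\,{\rm log}_\infty(p)/{\rm log}_\infty(\sqrt{D_K}) < \infty$. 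In particular $(C_p(K_n))_{n\geq n_0}$ is bounded; adjoining the finitely many terms $C_p(K_0),\dots,C_p(K_{n_0-1})$, each finite because $\order {\mathcal T}_{K_n}$ is finite under Leopoldt's conjecture in $K_n$, we obtain ${\mathcal C}_p := \sup_{n\geq 0} C_p(K_n) < \infty$. Equivalently $v_p(\order {\mathcal T}_{K_n}) \leq {\mathcal C}_p\cdot{\rm log}_\infty(\sqrt{D_{K_n}})/{\rm log}_\infty(p)$ for all $n$, i.e. Conjecture \ref{conjprinc}\,(i) holds for the family ${\mathcal K}$.

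The computations above are entirely routine; the \emph{real} difficulty — which I would not attempt to settle here — is the linearity input $v_p(\order {\mathcal T}_{K_n}) = \alpha\,n + \beta\,p^n + \gamma$, and this is exactly why the conclusion is only conditional. Establishing it would amount to an Iwasawa-type structure theorem for the modules ${\mathcal T}_{K_n}$ along a $p$-class field tower that is in general non-abelian and not a $\Z_p$-extension, in analogy with the $\lambda,\mu,\nu$-formula available in the cyclotomic $\Z_p$-case used in the preceding theorem; that analogue is not known, and it lies outside the scope of the present arithmetic methods.
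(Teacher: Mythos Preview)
Your proof is correct and follows the same approach as the paper: both arguments use the unramifiedness of $K_n/K$ in the Hilbert tower to get $D_{K_n}=D_K^{p^n}$ via the discriminant tower formula, substitute into the definition of $C_p(K_n)$, and then observe that under the linearity hypothesis the sequence converges to $\beta\,{\rm log}_\infty(p)/{\rm log}_\infty(\sqrt{D_K})$, hence is bounded. Your version is slightly more careful (noting $D_K>1$, handling the finitely many initial terms explicitly, and flagging the status of the linearity input), but the substance is identical.
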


\begin{proof} 
Since $K_n/K$ is unramified, 
$D_{K_n} = D_{K}^{[K_n : K]}\! \cdot {\rm N}_{K/\Q} (D_{K_n/K})=D_K^{p^n}$. 
So, for all $n \gg 0$, $C_p(K_n) = \Frac{(\alpha \, n + \beta\,p^n + \gamma) \cdot {\rm log}_\infty(p)}
{p^n \cdot {\rm log}_\infty(\sqrt{D_{K}})}$, equivalent to the constant
$\Frac{\beta\cdot {\rm log}_\infty(p)}{{\rm log}_\infty(\sqrt{D_{K}})}$ at infinity.
Whence the existence of ${\mathcal C}_p$ over ${\mathcal K}$.
If $\beta=0$, then $C_p(K_n) \to 0$.
\end{proof}

\noindent
The orders $\order \Cl_{K_n}$ have this property of ``linearity'' 
and ${\rm rk}_p(\Cl_{K_n}) \to \infty$ under some conditions \cite[Theorem A]{Ha}; 
thus, it would remain the question of a similar linearity for the valuations, 
according to $[K_n : K]$, of the normalized regulators ${\mathcal R}_{K_n}$.

\subsection{Comparison ``archimedean'' versus ``$p$-adic''}\label{avsp}

The above considerations are, in some sense, a $p$-adic approach 
of some deep results (Brauer--Siegel--Tsfasman--Vlad\u{u}\c{t} 
theorems \cite{TV, Zy} and broad generalizations in \cite{Ts}, then
\cite{L1} for quantitative bounds from the Brauer--Siegel theorem) 
on the behavior, 
in a tower $L := \bigcup_{n \geq 0} K_n$ of finite extensions $K_n/K$, 
of the quotient $B\!S_{K_n}:= \Frac{{\rm log}_\infty(h_{K_n}\! \cdot 
R_{K_n, \infty})}{{\rm log}_\infty(\sqrt{D_{K_n}})}$.

\noindent
Of course, in order to infer the $p$-adic case, our purpose is 
to deal, in the archimedean one, with any $K \in {\mathcal K}_{\rm real}$ 
or with families ${\mathcal K}$ fulfilling some specific conditions (e.g., $[K : \Q]=d$, 
$\Frac{[K : \Q]}{{\rm log}_\infty(\sqrt{D_K})} \to 0$), which is 
possible thanks to \cite[Theorem 1]{Zy}, at least for Galois fields. 
For any $K \in {\mathcal K}_{\rm real}$, let
$B\!S_K := \Frac{{\rm log}_\infty(h_K \cdot R_{K,\infty})}{{\rm log}_\infty(\sqrt{D_K})}$.

We shall consider the following {\it normalized quotient} $\wt {B\!S}_K = B\!S_K-1$
using $\order {\mathcal T}_{K,{p_\infty}}$ instead of $h_K \cdot R_{K,\infty}$:
\begin{equation}\label{BSTV}
\wt {B\!S}_K :=   \Frac{{\rm log}_\infty \Big(h_K \cdot \Frac{R_{K,\infty}}{\sqrt{D_K}} \Big)}
{{\rm log}_\infty(\sqrt{D_K})} =  \Frac{{\rm log}_\infty(\order {\mathcal T}_{K,{p_\infty}})}
{{\rm log}_\infty(\sqrt{D_K})}, \  K \in {\mathcal K} \ \hbox{(from formula \eqref{infini})},
\end{equation}
and presume that this function is bounded over ${\mathcal K}$. 
When the degree is constant in the family, the classical Brauer--Siegel theorem
applies since $\Frac{[K : \Q]}{{\rm log}_\infty(\sqrt{D_K})} \to 0$.

The following program gives, for the family ${\mathcal K}_{\rm real}^{(2)}$ 
of real quadratic fields of discriminants $D$, 
consistent verifications for the original function $B\!S$:

\smallskip
\footnotesize
\begin{verbatim}
{Max=0;Min=1;for(D=10^8,10^8+10^6,e=valuation(D,2);M=D/2^e;if(core(M)!=M,next);
if((e==1||e>3)||(e==0 & Mod(M,4)!=1)||(e==2 & Mod(M,4)==1),next);P=x^2-D;
K=bnfinit(P,1);C8=component(K,8);h=component(component(C8,1),1);
reg=component(C8,2);BS=log(h*reg)/log(sqrt(D));if(BS<Min,Min=BS;
print(D," ",Min," ",Max));if(BS>Max,Max=BS;print(D," ",Min," ",Max)))}
\end{verbatim}
\normalsize

\smallskip\noindent
$0.647 < B\!S < 1.155$ for $D \in [10^5, 2 \!\cdot\! 10^5]$, $0.734 < B\!S < 1.136$ 
for $D \in [10^7, 10^7+10^5]$, $0.7657< B\!S < 1.1239$ for $D \in [10^8, 10^8+10^5]$,
and $0.75738 < B\!S < 1.12713$ for $D \in [10^8, 10^8+10^6]$ (more than two days of computer),
showing:  $\wt {B\!S}_K=O(1) < 1$. 
Then $0.773 < B\!S < 1.113$ for the family $K=\Q(\sqrt {a^2+1})$, $a \in [10^4, 2 \cdot 10^4]$.

\smallskip \noindent
In the same way, the family ${\mathcal K}_{\rm ab}^{(3)}$ of
cyclic cubic fields of conductors $f$, gives:

\smallskip\noindent
$0.6653 \leq B\!S \leq 1.1478$ for $f \in [10^4, 10^6]$, 
$0.7547 \leq B\!S \leq 1.1385$ for $f \in [10^6, 2 \!\cdot \!10^6]$.

\begin{remarks} (i)
In the archimedean viewpoint, we have
 $C_{p_\infty}(K)=\Frac{{\rm log}_\infty(\order {\mathcal T}_{K,{p_\infty}})}
{{\rm log}_\infty(\sqrt{D_K})}$, giving, from the expression \eqref{BSTV} 
of $\wt {B\!S}_K$, ${\rm log}_\infty(\order {\mathcal T}_{K,{p_\infty}}) 
= \wt {B\!S}_K \cdot {\rm log}_\infty(\sqrt{D_K})$;
thus we obtain about the above calculations for 
the examples of fixed families ${\mathcal K}$:
\ctl{${\rm log}_\infty(\order {\mathcal T}_{K,{p_\infty}}) \leq 
O(1) \cdot {\rm log}_\infty(\sqrt{D_K})$
 written  ${\rm log}_\infty(\order {\mathcal T}_{K,{p_\infty}}) \leq 
{\mathcal C}_{p_\infty} \cdot {\rm log}_\infty(\sqrt{D_K})$,}
giving, in some sense, the inequality of the $p$-adic Conjecture 
\ref{conjprinc} with the audacious convention for the 
{\it infinite place} ${p_\infty}$ and 
${\mathcal T}_{K,{p_\infty}}=h_K \cdot \Frac{R_{K,\infty}}{\sqrt{D_K}}$: 

\centerline{${\rm log}_\infty({p_\infty})=1 \ \ \& \ \ 
v_{{p_\infty}} (\order {\mathcal T}_{K,{p_\infty}}) = 
{\rm log}_\infty(\order {\mathcal T}_{K,{p_\infty}})$.}

\smallskip \noindent
in which case, the constant ${\mathcal C}_{p_\infty}$ is the maximal value
reached by $\wt {B\!S}_K = B\!S_K-1$ over the given family ${\mathcal K}$.

\medskip
(ii) One may wonder about the differences of behaviour and properties 
between $C_{p_\infty}(K)$ and $C_{p}(K)$, as $D_K \to \infty$, 
because of the choosen normalizations 
and the role of the discriminant in the definitions. The only change could be to define:
\ctlm{${\mathcal T}'_{K,{p_\infty}}=h_K \cdot R_{K,\infty}\ $ and 
$\ C'_{p_\infty}(K) = \Frac{{\rm log}_\infty({\mathcal T}'_{K,{p_\infty}})}
{{\rm log}_\infty(\sqrt {D_K})} = C_{p_\infty}(K)+1 = {B\!S}_K$,}
by reference to Brauer--Siegel context,
but in that case, we should have (from \eqref{residu})
${\mathcal T}'_{K,{p_\infty}} =\wt \kappa_{K,{p_\infty}} \cdot \sqrt {D_K}$,
with $\wt \kappa_{K,{p_\infty}} = \frac{1}{2^{d-1}} \cdot \kappa_{K,{p_\infty}}$,
which cannot be a suitable normalization of the $\zeta$-function and its residue;
indeed, on the interval $[2, 10^6]$ of discriminants of real quadratic fields,
the local maxima of 
$(\kappa_{K,{p_\infty}}, \kappa_{K,{p_\infty}} \cdot \sqrt {D_K})$
increase excessively from $(0.215204,\  0.481211)$ to $(2.732814,\  2705.305810)$.

\smallskip
But the comparison must take into account the 
difference of nature of the sets of values of the functions $C_{p_\infty}$ and $C_p$:

\smallskip
The first one takes its values in an explicitely bounded interval of $\R$, containing~$0$,
given by the Brauer--Siegel--Tsfasman--Vlad\u{u}\c{t}--Zykin results:
\ctl{$S_{p_\infty}= \Big \{ v_{p_\infty}(\order {\mathcal T}_{K,\infty}) \cdot
\Frac{{\rm log}_\infty({p_\infty})}{{\rm log}_\infty(\sqrt{D_K})},\  K \in {\mathcal K} \Big\} 
\subseteq \R \cdot \Frac{{\rm log}_\infty({p_\infty})}{{\rm log}_\infty(\sqrt{D_K})}$,}
while the second one takes its values in a discrete set of the form:
\ctl{$S_p= \Big \{ v_p(\order {\mathcal T}_{K,p}) \cdot 
\Frac{{\rm log}_\infty(p)}{{\rm log}_\infty(\sqrt {D_K})},\  K \in {\mathcal K} \Big\} 
\subseteq \N \cdot \Frac{{\rm log}_\infty(p)}{{\rm log}_\infty(\sqrt {D_K})}$,}
so that $v_{p_\infty}(\order {\mathcal T}_{K,\infty}) = 
{\rm log}_\infty(\order {\mathcal T}_{K,\infty})$ is never $0$ (except if $K=\Q$)
while $v_p(\order {\mathcal T}_{K,p})$ is equal to $0$ for infinitely many fields $K$, probably
with a positive density which increases significantly as $p\to \infty$; but, symmetrically,
we have seen that the integers $v_p(\order {\mathcal T}_{K,p})$ take infinitely 
many strictly positive values for huge discriminants. 

\smallskip
To compare the two situations one must probably compute some ``integrals'' when 
$D_K$ varies in some intervals. Whatever the choice of the family ${\mathcal K}$, 
the sets of real coefficients $\Frac{{\rm log}_\infty(p_v)}{{\rm log}_\infty(\sqrt {D_K})}$ are 
homothetic discrete subsets of $\R_+$ as $v$ varies, so that the 
comparison is based on the coefficients $v_{p_\infty}(\order {\mathcal T}_{K,\infty})$ \&
$v_p(\order {\mathcal T}_{K,p})$, respectively.

\smallskip
The following programs compute the means of $C_{v}(K)$ on intervals 
of discriminants $D_K$, $K \in{\mathcal K}_{\rm real}^{(2)}$, for $p_\infty$ and $p \geq 2$, 
but many other means may be interesting:

\smallskip\footnotesize
\begin{verbatim}
{Sinfty=0.0;N=0;for(D=10^5,2*10^5,e=valuation(D,2);M=D/2^e;if(core(M)!=M,next);
if((e==1||e>3)||(e==0 & Mod(M,4)!=1)||(e==2 & Mod(M,4)==1),next);P=x^2-D;
N=N+1;K=bnfinit(P,1);C8=component(K,8);h=component(component(C8,1),1);
reg=component(C8,2);Cp=log(h*reg)/log(sqrt(D))-1;Sinfty=Sinfty+Cp);print(Sinfty/N)}

{p=3;n=18;Sp=0.0;N=0;for(D=10^5,2*10^5,e=valuation(D,2);M=D/2^e;if(core(M)!=M,next);
if((e==1 || e>3)||(e==0 & Mod(M,4)!=1)||(e==2 & Mod(M,4)==1),next);P=x^2-D;
N=N+1;K=bnfinit(P,1);Kpn=bnrinit(K,p^n);C5=component(Kpn,5);
Hpn0=component(C5,1);Hpn=component(C5,2);Hpn1=component(Hpn,1);
vptor=valuation(Hpn0/Hpn1,p);Cp=vptor*log(p)/log(sqrt(D));Sp=Sp+Cp);print(Sp/N)}
\end{verbatim}
\normalsize

\footnotesize
$v=p_\infty$ gives 
$M_\infty=-0.08072025$ for $D \in [5,10^6]$, $M_\infty=-0.05566364$ 
for $D \in [10^8,10^8+10^5]$

\noindent
\hspace{2.15cm} $M_\infty=-0.06817971$ for $D \in [5,10^7]$,
$M_\infty=-0.05562784$ for $D \in [10^8,10^8+10^6]$

\hspace{6.55cm} $M_\infty=-0.04947600$ for $D \in [10^9,10^9+10^4]$

\noindent
$p=3$ ($n=18$) gives $\ M_3=0.12656432$ for $D \in [5,10^6]$, $M_3=0.10463765 $
for $D \in [10^7,10^7+10^5]$

\noindent
$p=5$ ($n=12$) gives $\ M_5= 0.07257764 $ for $D \in [5,10^6]$, $M_5= 0.05897703 $
for $D \in [10^7,10^7+10^5]$

\noindent
$p=7$ ($n=10$) gives $\ M_7= 0.05647554$ for $D \in [5,10^6]$, $M_7= 0.04649732$ 
for $D \in [10^7,10^7+10^5]$

\noindent
$p=29$ ($n=6$) gives $M_{29}= 0.01901355$ for $D \in [5,10^6]$, $M_{29}= 0.01572121$ 
for $D \in [10^7,10^7+10^5]$
\end{remarks}

\normalsize \noindent
giving obvious heuristics about the behaviour of each mean.

\section{Conclusions}
The analysis of the archimedean case, depending on the properties of 
the complex $\zeta$-function of $K$, is sufficiently significant to hope the 
relevance of the $p$-adic one for which we give some observations, 
despite the lack of proofs:

\medskip \noindent
\quad ({\rm\bf a})
In the $p$-adic Conjecture \ref{conjprinc}, 
the most important term is $v_p(\order {\mathcal R}_{K,p})$, the valuation  
of the normalized $p$-adic regulator, the contribution of $v_p (\order \Cl_{K,p})$ 
being probably negligible compared to $v_p (\order {\mathcal R}_{K,p})$
as shown, among other, by classical heuristics \cite{CL,CM}, and
reinforced by the recent conjectures cited in the \S\,\ref{vph}\,(ii).

\smallskip
Furthermore, for $K$ fixed, $v_p (\order \Cl_{K,p}) \geq 1$ for finitely many 
primes $p$, but the case of $v_p(\order {\mathcal R}_{K,p})$
is an out of reach conjecture \cite[Conjecture 8.11]{Gr2}.

\medskip \noindent
\quad ({\rm\bf b})
The family of Subsection \ref{family} shows that $p$-adic regulators 
may tend $p$-adically to $0$, even in simplest cases, and it should be  
of great interest to find other such critical sub-families of units,
depending on arbitrary large $p$-powers, to precise the relation 
between $v_p(\order {\mathcal R}_{K,p})$ and 
${\rm log}_\infty(\sqrt{D_K})$, $K \in {\mathcal K}_{\rm real}^{(d)}$, 
for degrees $d>2$.

\smallskip
After the writing of this paper we have found the reference \cite{Wa5} 
about the family of cyclic cubic fields $K$ defined by 
$P=x^3-(N^3-2 N^2+3 N-3) \, x^2-N^2 x-1$ for any $N \in \Z$, $N \ne 1$, 
near $1$ in $\Z_3$; this paper of Washington deals with $p=3$, 
to obtain $3$-adic $L$-functions with zeros arbitrarily close to $1$, but
we observed that any $p \geq 2$ gives interesting non-$p$-rational 
fields with large $v_p(\order {\mathcal T}_{K,p})$ and $C_p(K)<1$ for all. 
The reader may play with the following program (choose $p \geq 2$, 
the intervals defining $N=1+a\,p^k$, a lower bound
$vp$ for $vptor$ and $n$ large enough):

\footnotesize
\begin{verbatim}
{p=2;bk=2;Bk=10;ba=1;Ba=12;vp=10;n=36;print("p=",p);for(k=bk,Bk,for(a=ba,Ba,
if(Mod(a,p)==0,next);N=1+a*p^k;P=x^3-(N^3-2*N^2+3*N-3)*x^2-N^2*x-1;K=bnfinit(P,1);
Kpn=bnrinit(K,p^n);C5=component(Kpn,5);Hpn0=component(C5,1);Hpn=component(C5,2);
Hpn1=component(Hpn,1);vptor=valuation(Hpn0/Hpn1,p);
if(vptor>vp,D=component(component(K,7),3);Cp=vptor*log(p)/log(sqrt(D));
print("a=",a," k=",k," D=",D," vptor=",vptor," Cp=",Cp);print("P=",P))))}
\end{verbatim}
\normalsize

\noindent
giving for instance the interesting cases with $a=1$ ($p=2$, $3$, $5$):

\footnotesize
\begin{verbatim}
p=2 k=9 D=17213619969^2  vptor=28  Cp=0.8234   P=x^3-134480895*x^2-263169*x-1
p=3 k=9 D=150102262056706213^2  vptor=23  Cp=0.6388
                                               P=x^3-7625984944841*x^2-387459856*x-1
p=5 k=5 D=95397978509379^2 vptor=10 Cp=0.4999  P=x^3-30527349999*x^2-9771876*x-1
\end{verbatim}
\normalsize

\smallskip\noindent
\quad ({\rm\bf c}) 
Consider, for any $p \geq 2$ and any $K \in {\mathcal K}_{\rm real}$:
\ctlm{$C_p(K):= 
\Frac{v_p(\order {\mathcal T}_{K,p}) \cdot {\rm log}_\infty(p)}{{\rm log}_\infty(\sqrt{D_K})}$,
\ \ ${\mathcal C}_p := \ds\sup_K(C_p(K))$, \ \ ${\mathcal C}_K :=\ds \sup_p(C_p(K))$.}
\qquad (i) The existence of ${\mathcal C}_K < \infty$, for a given $K$, only says that
the conjecture proposed in \cite[Conjecture 8.11]{Gr2}, claiming that any number 
field is $p$-rational for all $p \gg 0$, is true for the field $K$; for this field,
$\ds\limsup_{p}(C_p(K)) = 0$.

\noindent
\qquad (ii) If ${\mathcal C}_p$ does exist for a given $p$,
we have an universal $p$-adic analog of Brauer--Siegel theorem 
(Conjecture \ref{conjprinc}).
The existence of ${\mathcal C}_p < \infty$ may be true
taking instead $\ds\sup_{K \in {\mathcal K}} (C_p(K))$, for particular families
${\mathcal K}$ (e.g., extensions of fixed degree or subfields of
some infinite towers as in \cite{HM, Iv, TV, Zy});
but we must mention that for the invariants ${\mathcal T}_{K,p}$, the
transfer map ${\mathcal T}_{K,p} \too {\mathcal T}_{L,p}$ is injective
in any extension $L/K$ in which Leopoldt's conjecture is assumed
\cite[Theorem IV.2.1]{Gr1}, which leads to a major difference 
from the case of $p$-class groups. 

\smallskip\noindent
\qquad (iii) Furthermore, it seems that $\ds\limsup_{K\in {\mathcal K}}(C_p(K))$ 
may be $\leq 1$ for any $p$; then $\ds\limsup_{p}(C_p(K)) = \infty$ or $0$, 
for any $K$, depends on \cite[Conjecture 8.11]{Gr2}. But computations 
for very large discriminants (of a great lot of quadratic fields for 
instance) is out of reach (see the Remarks of the \S\,\ref{vph}).

\medskip \noindent
\quad ({\rm\bf d}) 
When $p$ and $D_K$ are not independent, this yields some 
interesting potential results as the following one:
let ${\mathcal K}_{\rm real}(p^e)$ be the set of fields $K \in {\mathcal K}_{\rm real}$ 
of discriminant $D_K=p^e$, for any fixed $p$-power $p^e$, $e \geq 1$; 
then, as soon as $C_p(K)< \frac{2}{e}$ for all $K$ in some subfamily 
${\mathcal K}(p^e)$ of ${\mathcal K}_{\rm real}(p^e)$, $K$ is $p$-rational
since then $C_p(K) =  \frac{2}{e} \cdot v_p(\order {\mathcal T}_{K})$.
For instance, if we were able to prove that $C_p(K)<2$ for all 
$K \in {\mathcal K}_{\rm real}^{(2)}(p)$ (quadratic fields $K=\Q(\sqrt p)$, 
$p \equiv 1 \pmod 4$), this would imply the conjecture of 
Ankeny--Artin--Chowla (see \cite[\S\,5.6]{Wa1}), affirming that 
$\varepsilon_K =: u + v\,\sqrt p$ is such that $v \not\equiv 0
\pmod p$, which is equivalent, since $\Cl_K=1$, to ${\mathcal R}_K \sim 1$ 
(indeed, $\varepsilon_K^p \equiv u \equiv \varepsilon_K - v\,\sqrt p \pmod p$,
whence $\varepsilon_K^{p-1} \equiv 1+ \varepsilon_K^{-1}v\,\sqrt p  \pmod p$). 

The cyclic quartic fields of conductor $p$ (i.e., 
$K \in {\mathcal K}_{\rm real}^{(4)}(p^3)$) give no solution in the 
selected interval, although $C_p(K)=\frac{2}{3}\,v_p(\order {\mathcal T}_{K})$.
The case of $K \in {\mathcal K}_{\rm real}^{(3)}(p^2)$ (cyclic cubic fields of 
conductor $p$) is interesting since, in this case, 
$C_p(K)=v_p(\order {\mathcal T}_{K})$, for which
$v_p(\order {\mathcal T}_{K})=1$ is more credible if we consider 
that for instance $C_p(K)<2$ over ${\mathcal K}_{\rm real}^{(3)}(p^2)$; 
indeed we have found only two examples up to $p \leq 10^8$:  

\smallskip
\footnotesize
\begin{verbatim}
p=5479       vptor=Cp=1    P=x^3 + x^2 - 1826x + 13799           
p=15646243   vptor=Cp=1    P=x^3 + x^2 - 5215414x - 311765879  
\end{verbatim}
\normalsize

Let's give few examples in degrees $d=5, 7, 9$ using $polsubcyclo(p,d)$ (cyclic 
fields of conductor $p$) since $C_p(K)=\frac{2}{d-1}\, v_p(\order {\mathcal T}_{K})$
(for all, $v_p(\order {\mathcal T}_{K})=1$, $v_p(\order \Cl_K)=0$):

\footnotesize
\begin{verbatim}
p=130811  Cp=0.5000  P=x^5+x^4-52324*x^3-429060*x^2+575263872*x+3600157696
p=421     Cp=0.3333  P=x^7+x^6-180*x^5-103*x^4+6180*x^3+11596*x^2-25209*x-49213
p=44563   Cp=0.3333  P=x^7+x^6-19098*x^5-87307*x^4+73981206*x^3-1061790574*x^2
                                                         -13438850605*x-28465212577
p=37      Cp=0.2500  P=x^9+x^8-16*x^7-11*x^6+66*x^5+32*x^4-73*x^3-7*x^2+7*x+1
p=13411   Cp=0.2500  P=x^9+x^8-5960*x^7+117167*x^6+5761671*x^5-114461957*x^4
                       -2103829198*x^3+33776243778*x^2+244391306047*x-3339737282887
\end{verbatim}
\normalsize

In other words, a more general ``Ankeny--Artin--Chowla Conjecture'' should be
that the set of non-$p$-rational
$K \in {\mathcal K}_{\rm real}^{(d)}(p^e)$ (or any suitable subfamily) is finite.
Thus the existence (if so), and then the order of magnitude of ${\mathcal C}_p$, 
would govern many obstructions and/or finiteness theorems in number theory.

\medskip \noindent
\quad ({\rm\bf e}) 
On another hand, the difficult Greenberg's conjecture \cite{Gre1}, 
on the triviality of the Iwasawa invariants $\lambda$, $\mu$ for the $p$-class 
groups in $K^{\rm c}$, in the totally real case, goes in the sense of rarity 
of large $p$-class groups as we have mentioned at the \S\,\ref{vph}\,(ii),
and this conjecture also depends on Fermat's quotients of algebraic 
numbers (\cite[\S\,7.7]{Gr3}, \cite[\S\,4.2]{Gr6}) or of a similar 
logarithmic framework as in \cite{Ja}.
In the same way, some other conjectures of Greenberg \cite{Gre2} depend,
in a crucial manner, of the existence of $p$-rational fields with given Galois groups.  

\medskip \noindent
\quad ({\rm\bf f}) 
But all this is far to be proved because of a terrible lack of knowledge 
of $p$-Fermat quotients of algebraic numbers, a notion which gives a
{\it weaker information} than the $p$-adic logarithms or regulators, but which
governs many deep arithmetical problems, even assuming the Leopoldt
conjecture which appears as a rough step in the study of 
${\rm Gal}(H_K^{\rm pr}/K)$; indeed, if Leopoldt's conjecture is not fulfilled
in a given field $K$,
there exists a sequence $\varepsilon_i \in E_K$, $\varepsilon_i \notin E_K^p$,
such that $\delta_p(\varepsilon_i)\to \infty$ with $i$, which shows the 
extreme uncertainty about the ${\mathcal T}_{K,p}$ groups.

\smallskip \noindent
\quad ({\rm\bf g}) 
Recal to finish that ${\mathcal T}_{K,p}$ is the dual of 
${\rm H}^2(G_p(K), \Z_p)$ (\cite[Chapitre 1]{Ng}, then \cite[Appendix, Theorem 2.2]{Gr1}), 
where $G_p(K)$ is the Galois group of the maximal $p$-ramified pro-$p$-extension of $K$
(for which $G_p(K)^{\rm ab} \simeq \Z_p \times {\mathcal T}_{K,p}$ in the 
totally real case, under Leopoldt's conjecture), and can be considered as 
the first of the still mysterious non positive twists ${\rm H}^2 (G_p(K), \Z_p (i))$ 
of the motivic cohomology (whereas the positive twists can be dealt with 
using K-theory thanks to the Quillen--Lichtenbaum conjecture, now a 
theorem of V\oe vodsky--Rost and al.).

\subsection*{Acknowledgments} I thank Christian Maire for discussions 
about some aspects of Brauer--Siegel--Tsfasman--Vlad\u{u}\c{t} theorems, 
St\'ephane Louboutin for references on complex $\zeta$-functions, 
Thong Nguyen Quang Do for confirming to me the critical role of 
${\mathcal T}_{K,p} \simeq {\rm H}^2 (G_p(K), \Z_p (0))^*$, from the 
cohomological viewpoint recalled above.


\begin{thebibliography}{xx}
\bibliographystyle{amsplain}

\bibitem
{AF}  Y. Amice et J. Fresnel, {\it Fonctions z\^eta $p$-adiques des corps
de nombres ab\'eliens r\'eels}, Acta Arithmetica {\bf 20} (1972), no. 4, 353--384.\par
\url{http://matwbn.icm.edu.pl/ksiazki/aa/aa20/aa2043.pdf}

\bibitem
{B} D. Byeon,  {\it  Indivisibility of class numbers and Iwasawa $\lambda$-invariants 
of real quadratic fields}, Compositio Mathematica  {\bf 126} (2001), no. 3, 249--256.\par
\url{http://www.math.snu.ac.kr/~dhbyeon/11_byeon-compositio.pdf}

\bibitem
{CL} H. Cohen and H.W. Lenstra, Jr., {\it Heuristics on class groups of number fields},
Number Theory (Noordwijkerhout 1983), Lecture Notes in Math., vol. 1068, Springer, 
Berlin and New York (1984), 33--62.%
\url{https://link.springer.com/chapter/10.1007/BFb0099440}

\bibitem
{CM} H. Cohen and J. Martinet, {\it Class groups of number fields:
Numerical heuristics}, Math. Comp. {\bf 48} (1987), no. 177, 123--137.\par
\url{http://www.ams.org/journals/mcom/1987-48-177/S0025-5718-1987-0866103-4/}

\bibitem
{Coa} J. Coates, {\it $p$-adic $L$-functions and Iwasawa's theory}, 
Algebr. Number Fields, Proc. Symp. London math. Soc., 
Univ. Durham (1975),  Academic Press, London (1977), 269--353. 

\bibitem
{Col} P. Colmez, {\it  R\'esidu en $s = 1$ des fonctions z\^eta $p$-adiques},
Invent. Math. {\bf 91} (1988), 371--389.\par
\url{http://gdz.sub.uni-goettingen.de/dms/load/img/?PID=GDZPPN002104911}

 \bibitem
{ET}  V. Ennola and R. Turunen, {\it On Cyclic Cubic Fields}, Math. of Computation
 {\bf 45} (1985), no. 172, 585--589.%
\url{http://www.ams.org/journals/mcom/1985-45-172/S0025-5718-1985-0804947-3/}

\bibitem
{EV} J. S. Ellenberg and A. Venkatesh,  {\it Reflection principles and bounds for 
class group torsion}, Int. Math. Res. Not. (1) (2007).%
\url{http://math.stanford.edu/~akshay/research/sch.pdf}

\bibitem
{Gr1} G. Gras, {\it Class Field Theory: from theory to practice},
SMM, Springer-Verlag 2003;  second corrected printing 2005.
\url{https://www.researchgate.net/publication/268005797}

\bibitem
{Gr2} G. Gras, {\it Les $\theta$-r\'egulateurs locaux d'un nombre alg\'ebrique :
Conjectures $p$-adiques}, Canadian Journal of Mathematics 
{\bf 68} (2016), no. 3, 571--624.\par
\url{http://dx.doi.org/10.4153/CJM-2015-026-3} \url{https://arxiv.org/pdf/1701.02618.pdf}

\bibitem
{Gr3} G. Gras,  {\it Approche $p$-adique de la conjecture de Greenberg
pour les corps totalement r\'eels},  Annales Math\'ematiques
Blaise Pascal {\bf 24} (2017), no. 2, p. 235--291.
\url{http://ambp.cedram.org/cedram-bin/article/AMBP_2017__24_2_235_0.pdf}

\bibitem
{Gr4} G. Gras, {\it The $p$-adic Kummer-Leopoldt Constant:
Normalized $p$-adic Regulator}, Int. J. Number Theory, {\bf 14} (2018), 
no. 2, 329--337.\url{https://doi.org/10.1142/S1793042118500203}

\bibitem
{Gr5} G. Gras, {\it On $p$-rationality of number fields. 
Applications -- PARI/GP programs} (preprint).\par
\url{https://arxiv.org/pdf/1709.06388.pdf}

\bibitem
{Gr6} G. Gras, {\it Normes d'id\'eaux dans la tour cyclotomique et
conjecture de Greenberg} (preprint 2017).\url{https://arxiv.org/pdf/1706.08784.pdf}

\bibitem
{Gre1} R. Greenberg,  {\it On the Iwasawa invariants of totally real number fields}, 
Amer. J. Math. {\bf 98} (1976), no. 1, 263--284.%
\url{http://www.jstor.org/stable/2373625?}

\bibitem{Gre2} R. Greenberg, {\it Galois representations with open image},
Annales de Math\'ematiques du Qu\'ebec {\bf 40} (2016), no. 1, 83--119.%
\url{https://doi.org/10.1007/s40316-015-0050-6}

\bibitem
{GM} H. Graves and M.R. Murty, {\it The $abc$ conjecture and non-Wieferich 
primes in arithmetic progressions}, Journal of Number Theory {\bf 133} 
(2013), no. 6, 1809--1813.\par
\url{https://doi.org/10.1016/j.jnt.2012.10.012}

\bibitem
{GS} A. Granville and K. Soundararajan, {\it Upper bounds for $\vert L(1,\chi)\vert$},
Quarterly Journal of Mathematics {\bf 53} (2002), 265--284.
\url{http://www.dms.umontreal.ca/~andrew/PDF/sizeL1chi.pdf}

\bibitem
{Ha} F. Hajir, {\it On the Growth of $p$-Class Groups in p-Class Field Towers},
Journal of Algebra {\bf 188} (1997), no. 1, 256--271.\url{https://doi.org/10.1006/jabr.1996.6849}

\bibitem
{H-B} D.R. Heath-Brown, {\it Square-Free Values of $n^2 + 1$}, Acta Arithmetica {\bf 155}
(2012), no. 1, 1--13.\par
\url{https://arxiv.org/pdf/1010.6217.pdf}

\bibitem
{HM} F. Hajir and C. Maire,{\it On the invariant factors of class groups in towers 
 of number fields}, Canad. J. Math. 70 (2018), 142--172.
\url{https://cms.math.ca/10.4153/CJM-2017-032-9}

\bibitem
{HZ} T. Hofmann  and Y. Zhang, {\it Valuations of $p$-adic regulators of cyclic cubic fields},
Journal of Number Theory {\bf 169} (2016), 86--102.
\url{https://doi.org/10.1016/j.jnt.2016.05.016}

\bibitem
{Iv} A. Ivanov, {\it Reconstructing decomposition subgroups in arithmetic
fundamental groups using regulators.} \url{https://arxiv.org/pdf/1409.4909.pdf}

\bibitem
{Ja} J-F. Jaulent,  {\it Note sur la conjecture de Greenberg},  J. Ramanujan Math. Soc.
(to appear).\url{https://arxiv.org/pdf/1612.00718.pdf}

\bibitem
{L1} S. Louboutin, {\it The Brauer--Siegel Theorem},
J. London Math. Soc. {\bf 2} (2005), no. 72, 40--52.\par
\url{https://www.researchgate.net/publication/266514633}

\bibitem
{L2} S. Louboutin, {\it Upper bounds for residues of Dedekind 
zeta functions and class nuumbers of cubic and quartic number fields},
Math. of Computation {\bf 80} (2011),  no. 275, 1813--1822.
\url{http://www.jstor.org/stable/23075379}

\bibitem
{L3} S. Louboutin, {\it Explicit upper bounds for residues of Dedekind zeta functions},
Moscou Math. J. {\bf 15} (2015), no. 4, 727--740.%
\url{http://www.mathjournals.org/mmj/2015-015-004/}

\bibitem
{Ng} T. Nguyen Quang Do, {\it  Sur la $\Z_p$-torsion de certains modules
galoisiens}, Ann. Inst. Fourier {\bf 36} (1986), no. 2, 27--46.%
\url{http://www.numdam.org/article/AIF_1986__36_2_27_0.pdf}

\bibitem
{P}  {\sc  The PARI Group}, PARI/GP, version \texttt{2.9.0}, 
{\it Universit\'e de Bordeaux} (2016).\par
\url{http://pari.math.u-bordeaux.fr/}\url{http://www.sagemath.org/fr/telecharger.html}

\bibitem
{Pin}  J. Pintz, {\it Elementary methods in the theory of $L$-functions VII, 
Upper bounds for $L(1,\chi)$}, Acta Arithmetica {\bf 32} (1977), 397--406.\par
\url{http://matwbn.icm.edu.pl/ksiazki/aa/aa32/aa3246.pdf}

\bibitem
{PV} F. Pitoun and F. Varescon, {\it Computing the torsion of the $p$-ramified 
module of a number field}, Math. Comp. {\bf 84} (2015), no. 291, 371--383.\hspace{1cm}
\url{http://www.ams.org/journals/mcom/2015-84-291/S0025-5718-2014-02838-X/S0025-5718-2014-02838-X.pdf}

\bibitem
{Ra} O. Ramar\'e, {\it Approximate formulae for $L(1,\chi)$}, Acta Arith. {\bf 100} (2001), 
no. 3, 245--266.\par
\url{https://www.researchgate.net/publication/238883367}\par
\url{http://iml.univ-mrs.fr/~ramare/Maths/LOneChi2.pdf}

 \bibitem
{Sch} R. Schoof,  {\it Computing Arakelov class groups},
Algorithmic Number Theory, MSRI Publications {\bf 44} (2008), 447--495.
\url{https://www.mat.uniroma2.it/~schoof/14schoof.pdf}

\bibitem
{Se} J-P. Serre,  {\it Sur le r\'esidu de la fonction z\^eta $p$-adique d'un corps
de nombres}, C.R. Acad. Sci. Paris {\bf 287} (1978), S\'erie I, 183--188.

\bibitem
{Si} J.H. Silverman, {\it Wieferich's criterion and the $abc$-conjecture}, 
Journal of Number Theory {\bf 30} (1988), 226--237.%
\url{https://doi.org/10.1016/0022-314X(88)90019-4}

\bibitem
{SSW} D.C. Shanks, P.J. Sime and L.C. Washington, 
{\it Zeros of $2$-adic $L$-functions and congruences for class numbers 
and fundamental units}, Math. Comp. {\bf 68} (1987), no. 227, 1243--1255.\par
\url{http://www.ams.org/journals/mcom/1999-68-227/S0025-5718-99-01046-7/}

\bibitem
{Ts} J. Tsimerman, {\it Brauer--Siegel for arithmetic tori and lower bounds for 
Galois orbits of special points}, J. Amer. Math. Soc. {\bf 25} (2012), no. 4, 1091--1117.\par
\url{https://arxiv.org/abs/1103.5619v3}

\bibitem
{TV} M. Tsfasman and S. Vlad\u{u}\c{t}, {\it Infinite global fields and the generalized 
Brauer--Siegel theorem}, Dedicated to Yuri I. Manin on the occasion 
of his $65$th birthday, Moscou Math. J. {\bf 2} (2002), no. 2, 329--402.
\url{http://www.ams.org/distribution/mmj/vol2-2-2002/tsfasman-vladuts.pdf}

\bibitem
{Wa1}  L.C. Washington, {\it Introduction to Cyclotomic Fields}, Graduate Texts in Math. 83, 
Springer enlarged second edition 1997. 

\bibitem
{Wa2}  L.C. Washington, {\it Zeroes of $p$-adic $L$-functions}, S\'eminaire de th\'eorie 
des nombres, Paris, 1980-81 (S\'em. Delange--Pisot--Poitou), Birkh\"auser, 
Boston,1982, 337--357.
\url{http://plouffe.fr/simon/math/Seminaire%20de%20Theorie%20des%20Nombres,%20Paris,%201980-1981.pdf}

\bibitem
{Wa3}  L.C. Washington, {\it Siegel zeros for $2$-adic $L$-functions},
Number Theory, Halifax, NS (1994), CMS Conf. Proc., American Mathematical 
Society, Providence, RI, 1995, 393--396.\par
\url{https://books.google.fr/books?id=gzRti1o1bXkC}

\bibitem
{Wa4}  L.C. Washington, {\it Zeroes of $p$-adic $L$-functions}, S\'eminaire de 
Th\'eorie des Nombres de Bordeaux (1980-1981), 1--4.%
\url{http://www.jstor.org/stable/44166382}

\bibitem
{Wa5}  L.C. Washington, {\it A Family of Cubic Fields and Zeros 
of $3$-adic $L$-Functions}, Journal of Number Theory {\bf 63} (1997), 408--417.
\url{https://doi.org/10.1006/jnth.1997.2096}

\bibitem
{Zy} A.I. Zykin,  {\it  Brauer--Siegel and Tsfasman--Vlad\u{u}\c{t} theorems for 
almost normal extensions of global fields}, Moscou Math. J.
{\bf 5} (2005), no. 4, 961--968.\par
\url{http://www.ams.org/distribution/mmj/vol5-4-2005/zykin.pdf}

\end{thebibliography}
\end{document}